\newtheorem{thm}{Theorem}[section] \newtheorem{pro}[thm]{Proposition}
\newtheorem{cor}[thm]{Corollary} \newtheorem{lem}[thm]{Lemma}
 \newtheorem{ex}[thm]{Example}
\newtheorem{rem}[thm]{Remark} \newtheorem{defn}[thm]{Definition}
 \newcommand{\dime}{\operatorname{dim}}
\newcommand{\res}{\operatorname{res}}
\newcommand{\Term}{\operatorname{Term}}
\newcommand{\Init}{\operatorname{Init}}
\newcommand{\spec}{\operatorname{spec}}
\newcommand{\Endo}{\operatorname{End}}
\newcommand{\Supp}{\operatorname{Supp}}
\newcommand{\Order}{\operatorname{ord}}
\newcommand{\kerl}{\operatorname{Ker}}
\newcommand{\Img}{\operatorname{Image}} \def\RR{\mathbf{R}}
 \def\div{\raise 1pt \hbox{\big|}}
\begin{document}

\title{On generalized Witt algebras in one variable}
\author{Ki-Bong Nam and Jonathan Pakianathan}
\maketitle

\begin{abstract}
We study a class of infinite dimensional Lie algebras called 
generalized Witt algebras (in one variable). 
These include the classical Witt algebra and
the centerless Virasoro algebra as important examples.

We show that any such generalized Witt algebra is a semisimple, indecomposable 
Lie algebra which does not contain any abelian Lie subalgebras of dimension 
greater than one. 

We develop an invariant of these generalized Witt algebras 
called the spectrum, 
and use it to show that there exist infinite families of nonisomorphic, 
simple, generalized Witt algebras and infinite families of nonisomorphic, 
nonsimple, generalized Witt algebras.

We develop a machinery that can be used to study the endomorphisms of 
a generalized Witt algebra in the case that the spectrum is ``discrete''.
We use this to show, that among other things, every nonzero Lie algebra 
endomorphism of the classical Witt algebra is an automorphism and 
every endomorphism of the centerless Virasoro algebra fixes a canonical 
element up to scalar multiplication. 

However, not every injective 
Lie algebra endomorphism of the centerless 
Virasoro algebra is an automorphism.

\noindent
{\it Keywords}: Infinite dimensional Lie algebra, Virasoro algebra.

\noindent
1991 {\it Mathematics Subject Classification.} Primary: 17B65, 17C20;
Secondary: 17B40.
\end{abstract}

\section{Introduction}

Throughout this paper, we will work over a field $\mathbf{k}$ of characteristic
zero. Also note that there will be no finiteness 
constraints on the dimension of
the Lie algebras in this paper - in fact, most of the Lie algebras that
we will consider will be infinite dimensional.  

We now sketch the basic results and ideas of this paper in this introductory 
section. Precise definitions 
of the concepts can be found within the paper. 

Let $\RR$ be the field of fractions of the power series algebra 
$\mathbf{k}[[x]]$.

Following~\cite{Kac}, we define a stable algebra to be a subalgebra of 
$\RR$ which 
is closed under formal differentiation $\partial$. Notice that we 
confine ourselves to the one variable case throughout this paper. 

Important examples of stable algebras are the polynomial algebra 
$\mathbf{k}[x]$, the power series algebra $\mathbf{k}[[x]]$ and the 
Laurent polynomial algebra $\mathbf{k}[x,x^{-1}]$. 

Following~\cite{Kaw} and ~\cite{Nam}, to 
every stable algebra $A$, we associate a Lie algebra $Witt(A)$. 
We refer to $Witt(A)$ as a generalized 
Witt algebra.
(The reader is warned, that there are different definitions of what 
a generalized Witt algebra is in the literature. Please look at 
Definition~\ref{defn: Witt} for ours.)

$Witt(\mathbf{k}[x])$ is the classical Witt algebra, (See~\cite{Cou}) and 
$Witt(\mathbf{k}[x,x^{-1}])$ is called the centerless Virasoro algebra 
in the literature. (See~\cite{Kap}.)

A Lie algebra is called self-centralizing if it contains no abelian Lie 
subalgebras of dimension greater than one. 
We prove:

\begin{thm}[Theorem~\ref{thm: sc} and Proposition~\ref{pro: sc}]
Every generalized \\ 
Witt algebra is self-centralizing. 

Furthermore, if 
it is infinite dimensional (which is the case for all but one trivial example 
where $A=\mathbf{k}$), then a generalized Witt algebra must be semisimple and 
indecomposable.
\end{thm}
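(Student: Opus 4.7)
My plan is to work with the standard presentation of $Witt(A)$, whose elements have the form $f\partial$ with $f \in A$ and whose bracket is $[f\partial, g\partial] = (fg'-f'g)\partial$, and to exploit the fact that all computations can be carried out inside the ambient differential field $\RR$, whose field of constants under $\partial$ is exactly $\mathbf{k}$.

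For self-centralizing, I would take two nonzero commuting elements $f\partial$ and $g\partial$. The commutation relation is $fg'-f'g = 0$ in $\RR$, which is equivalent to $(g/f)' = 0$, so $g/f \in \mathbf{k}$, and the two elements are linearly dependent over $\mathbf{k}$. Consequently, no abelian subalgebra can contain two linearly independent elements, which is exactly the self-centralizing property. The indecomposable claim in the infinite-dimensional case will then be a one-line consequence: if $Witt(A) = L_1 \oplus L_2$ as Lie algebras with both summands nonzero, picking nonzero $x_i \in L_i$ produces two linearly independent commuting elements, contradicting self-centralizing.

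For the semisimple statement I would show that there are no nonzero abelian ideals; this suffices because the last nonzero term in the derived series of a hypothetical solvable ideal would be an abelian ideal. By self-centralizing, such an abelian ideal is spanned by a single element $f\partial$ with $f \ne 0$, and the ideal condition reads: for every $g \in A$ there exists $c_g \in \mathbf{k}$ with $gf' - fg' = c_g f$, or equivalently $g' - (f'/f)g = -c_g$. The associated homogeneous equation $g' = (f'/f)g$ rewrites as $(g/f)' = 0$, so its solution space in $\RR$ is exactly $\mathbf{k}\cdot f$. Hence the full inhomogeneous solution space, as $c_g$ varies in $\mathbf{k}$, has $\mathbf{k}$-dimension at most two inside $\RR$. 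Since $A$ must sit inside this solution space, $A$ is at most two-dimensional, contradicting the infinite-dimensionality of $Witt(A)$.

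I expect the main obstacle to be the semisimple step, specifically the translation of ``$\mathbf{k} f\partial$ is an ideal'' into a first-order linear ODE in $\RR$ and the dimension count that follows. The self-centralizing step reduces to the observation that $\mathbf{k}$ is the field of constants of $\RR$, and indecomposability is then formal. One small bookkeeping point I would verify along the way is that the bracket indeed lands in $A\partial$, which follows because $A$ is closed under multiplication and under $\partial$ by hypothesis.
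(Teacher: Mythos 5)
Your proof is correct, and two of its three parts coincide with the paper's. The self-centralizing computation ($fg'-f'g=0 \Rightarrow (g/f)'=0 \Rightarrow g/f\in\mathbf{k}$, using that the constants of $\partial$ in $\RR$ are exactly $\mathbf{k}$) is precisely the proof of Theorem~\ref{thm: sc}, and your one-line indecomposability argument is what the paper dismisses as ``a trivial verification.'' Where you genuinely diverge is semisimplicity: you argue directly inside $Witt(A)$ that an abelian ideal is one-dimensional by self-centralizing, say $\mathbf{k}f\partial$, and that the ideal condition $gf'-fg'=c_g f$ makes $g\mapsto c_g$ a linear map $A\to\mathbf{k}$ whose kernel is the solution space of $(g/f)'=0$, namely $A\cap\mathbf{k}f$; rank--nullity gives $\dime(A)\le 2$, contradicting infinite dimension. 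The paper instead proves an abstract statement about arbitrary self-centralizing Lie algebras (Proposition~\ref{pro: sc}): for a finite-dimensional ideal $I$ of dimension $n>1$ the restriction $\theta(x)=ad(x)|_I$ is injective, forcing $\dime(\mathfrak{L})\le n^2$, while for $n=1$ one gets $\dime(\mathfrak{L})\le 2$ because $\kerl(\theta)=C(x)$ is one-dimensional; semisimplicity follows since abelian ideals of dimension greater than one are excluded outright by self-centralizing. Your ODE argument is the $Witt(A)$-specific incarnation of the paper's $n=1$ case --- your map $g\mapsto c_g$ is $\theta$ under the identification $\Endo_{\mathbf{k}}(I)\cong\mathbf{k}$, and your homogeneous solution space is $C(f\partial)$ --- so nothing is lost for the statement at hand, and your route is self-contained and concrete. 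What the paper's abstraction buys is reusability: the same proposition rules out \emph{all} nontrivial finite-dimensional ideals in the infinite-dimensional case and drives the classification of finite-dimensional self-centralizing Lie algebras in its part (h), neither of which your differential-field argument yields directly.
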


To contrast, over an algebraically closed field, 
it is shown that the only finite dimensional Lie algebra 
which is self-centralizing, semisimple and indecomposable is 
$\mathfrak{sl}_2$, the Lie algebra of $2 \times 2$ matrices of trace zero.

However a generalized Witt algebra need not be simple, some are and some are 
not.

If a generalized Witt algebra has a nonzero ad-diagonal element, i.e., 
nonzero $\alpha$ such that $ad(\alpha)$ is diagonal in some basis, we 
show that the set of eigenvalues of $ad(\alpha)$ possesses the  
algebraic structure of a pseudomonoid. 

We call this pseudomonoid, the 
spectrum of $\alpha$. We then show in Proposition~\ref{pro: Specwd} 
that any other nonzero ad-diagonal 
element of this Lie algebra, has to have an equivalent spectrum.
This allows us to define the spectrum of $\mathfrak{L}$ to be the spectrum of 
any nonzero ad-diagonal element. It is then shown that this is indeed an 
invariant for these kinds of Lie algebras, i.e., isomorphic Lie algebras 
have equivalent spectra. 

The constraint that the Lie algebra possesses a nonzero ad-diagonal element, 
is not so bad as all the classical examples possess this property. 

In these pseudomonoids, one can define the notion of an ideal subset.
We show:

\begin{pro}[Proposition~\ref{pro: idealcor}]
Let $\mathfrak{L}$ be a generalized Witt algebra with nonzero 
ad-diagonal element 
and let $G$ be its spectrum. Then there is a one-to-one 
correspondence between the ideal subsets of $G$ and the ideals of 
$\mathfrak{L}$.

If $G$ is actually an abelian group then it is simple as a psuedomonoid 
and hence $\mathfrak{L}$ is simple.
\end{pro}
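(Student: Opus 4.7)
The plan is to exhibit both operations explicitly: from an ideal $I\triangleleft \mathfrak{L}$ produce a subset of $G$, and from a subset $S\subseteq G$ produce a subspace of $\mathfrak{L}$, then verify that these two assignments are mutually inverse and that the ideal subsets on one side correspond exactly to the Lie ideals on the other.

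First I would show that every ideal $I$ is ``graded'' with respect to the eigenspace decomposition of $ad(\alpha)$. Let $\{e_g\}_{g\in G}$ be a basis of eigenvectors with $[\alpha,e_g]=g\,e_g$. Any $x\in I$ is a \emph{finite} sum $x=\sum_{g\in F} c_g e_g$, and the iterates $ad(\alpha)^k(x)=\sum_{g\in F} c_g g^k e_g$ lie in $I$ for all $k\ge 0$. The square matrix $(g^k)_{0\le k<|F|,\,g\in F}$ is Vandermonde, and invertible because the $g\in F$ are distinct elements of $\mathbf{k}$; so each individual piece $c_g e_g$ lies in $I$. Consequently $I=\bigoplus_{g\in S}\mathbf{k} e_g$ where $S:=\{g\in G:e_g\in I\}$.

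Next I would pin down which $S$ arise. Since $[e_g,e_h]$ is a scalar multiple of $e_{g\cdot h}$ when the pseudomonoid product $g\cdot h$ is defined and vanishes otherwise, the condition $[\mathfrak{L},I]\subseteq I$ translates, after using the graded decomposition just proved, into the following: whenever $s\in S$, $h\in G$, and $s\cdot h$ is defined with nonzero structure constant, one has $s\cdot h\in S$. This is precisely what it means for $S$ to be an ideal subset of $G$. The reverse assignment $S\mapsto \bigoplus_{g\in S}\mathbf{k} e_g$ obviously yields a Lie ideal, and the two operations are mutually inverse, delivering the bijection. For the last sentence, when $G$ is an abelian group every product is defined; given a nonempty ideal subset $S$ and any $s\in S$, closure forces $s\cdot G\subseteq S$, hence $S=G$. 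Thus $G$ is simple as a pseudomonoid, and the bijection immediately forces $\mathfrak{L}$ to be simple.

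The main obstacle I expect is the grading step: I must use the hypothesis that $\alpha$ is ad-diagonal to guarantee $ad(\alpha)$ is simultaneously diagonalizable on all of $\mathfrak{L}$, with eigenvalues that are pairwise distinct scalars on distinct basis vectors, so that the Vandermonde separation works on every finite support. Some care is also needed on the structure-constant issue in the group case: one must check, probably from the pseudomonoid axioms recalled earlier in the paper, that in the group setting the brackets $[e_s,e_h]$ are genuinely nonzero for $h\neq$ identity, so that closure under bracketing really does force $s\cdot h\in S$. Once these points are secured, the rest is a clean unpacking of definitions.
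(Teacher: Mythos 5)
Your proposal is correct and its architecture is essentially that of the paper: define $\Theta(S)=\oplus_{g\in S}E_g$, prove that every ideal contains the homogeneous components of each of its elements, and then translate $[\mathfrak{L},J]\subseteq J$ into the combinatorial definition of an ideal subset. The one real difference is the grading step. The paper argues by induction on the number of nonzero components, replacing $y$ by $y-\frac{1}{a_1}[e_0,y]$ to kill one component at a time; you instead apply all the powers $ad(\alpha)^k$ at once and invert a Vandermonde matrix in the (distinct) eigenvalues appearing in $y$. The two arguments rest on the same fact --- the grading pseudomonoid sits inside $\mathbf{k}$, so distinct indices give distinct eigenvalues of $ad(\alpha)$ --- and yours is a legitimate one-shot version of the paper's elimination. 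The diagonalizability and distinctness you flag as the ``main obstacle'' are exactly what Lemma~\ref{lem: eigen1} and Proposition~\ref{pro: otherdefn} supply: the eigenspaces of $ad(\alpha)$ are one-dimensional, $M(\alpha)=\mathfrak{L}$, and the structure constants are $[e_a,e_b]=(b-a)e_{a+b}$, so your ``product defined with nonzero structure constant'' is precisely ``$a\neq b$''.

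One small imprecision in the group case: since $[e_s,e_h]=(h-s)e_{s+h}$ vanishes exactly when $h=s$ (not when $h=0$), a single application of the ideal-subset condition to $s\in S$ gives only $s+h\in S$ for $h\neq s$, i.e.\ $S\supseteq G\setminus\{2s\}$, so ``closure forces $s\cdot G\subseteq S$'' overstates one step. The paper's Lemma~\ref{lem: simplePM} closes this in two moves: $-s\neq s$ in characteristic zero, so $s+(-s)=0\in S$, and then $0+b\in S$ for every $b\neq 0$, whence $S=G$. With that two-step detour your conclusion stands, and the rest of the argument is sound.
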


Since the classical Witt algebra and centerless Virasoro algebra have 
nonzero ad-diagonal elements, and their spectra are simple pseudomonoids, 
we recover the well-known fact, that they are simple, as a corollary. 

Using this spectrum invariant, we can distinguish between 
nonisomorphic generalized Witt algebras and show that there is a rich 
variety of such algebras (with nonzero ad-diagonal element): 

\begin{pro}[Examples~\ref{ex: three}, \ref{ex: five} and \ref{ex: six}]
There exist infinite families of nonisomorphic, simple, generalized 
Witt algebras and there exist infinite familes of nonisomorphic, 
nonsimple, generalized Witt algebras.

In fact for every submonoid of $(\mathbf{k},+)$, there is a 
generalized Witt algebra with that monoid as its spectrum.

Thus, in particular since every torsion-free abelian group embeds 
into the additive group of some rational vector space, we may get 
any torsion-free abelian group as the spectrum of a generalized 
Witt algebra in one variable 
by suitable choice of the base field $\mathbf{k}$. 
\end{pro}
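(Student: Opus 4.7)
The overall plan is to construct explicitly, for every submonoid $M$ of $(\mathbf{k},+)$, a stable subalgebra $A_M \subseteq \mathbf{k}[[x]]$ such that $Witt(A_M)$ has spectrum $M$, and then use this freedom (together with Proposition~\ref{pro: idealcor}) to generate the required infinite families.

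The construction I would use is to let $A_M$ be the $\mathbf{k}$-linear span of the formal exponentials $\{\exp(\mu x) : \mu \in M\}$ inside $\mathbf{k}[[x]]$, where $\exp(\mu x) := \sum_{n \geq 0} \mu^n x^n/n!$. The relations
\[
\exp(\mu x)\exp(\nu x) = \exp((\mu+\nu)x), \qquad \partial \exp(\mu x) = \mu \exp(\mu x)
\]
show that $A_M$ is a $\partial$-stable subalgebra, and a Vandermonde argument shows the $\exp(\mu x)$ are $\mathbf{k}$-linearly independent. In $Witt(A_M)$, setting $E_\mu := \exp(\mu x)\partial$ gives a basis indexed by $M$ with bracket $[E_\mu, E_\nu] = (\nu - \mu)E_{\mu+\nu}$; in particular $E_0 = \partial$ is ad-diagonal with eigenvalue set exactly $M$, so the spectrum of $Witt(A_M)$ is $M$. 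This proves the middle assertion of the proposition and, combined with the fact that every torsion-free abelian group embeds into a $\mathbb{Q}$-vector space and hence into $(\mathbf{k},+)$ for sufficiently large characteristic zero $\mathbf{k}$, yields the final assertion.

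To complete the first assertion, I would invoke the spectrum as an isomorphism invariant: any collection of submonoids of $(\mathbf{k},+)$ falling into distinct pseudomonoid-equivalence classes produces pairwise nonisomorphic algebras $Witt(A_M)$. For the simple family I would take the groups $\mathbb{Z}^n \subset (\mathbf{k},+)$ for $n = 1, 2, 3, \ldots$, which are simple as pseudomonoids, hence give simple Witt algebras by Proposition~\ref{pro: idealcor}, and which are distinguished by their $\mathbb{Q}$-ranks. For the nonsimple family I would use numerical semigroups such as $\langle 2, 2k+1\rangle \subset \mathbb{Z}_{\geq 0}$ for varying $k \geq 1$, each having its set of nonzero elements as a proper ideal subset (hence yielding a proper nonzero ideal via Proposition~\ref{pro: idealcor}), and which differ in Frobenius-type data.

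The main obstacle is the nonisomorphism step rather than the construction: the exponential construction is essentially forced by the eigenvalue calculation, but I must verify that rescaling and other pseudomonoid automorphisms do not collapse my supposedly distinct examples. The invariants I rely on ($\mathbb{Q}$-rank for the torsion-free groups and Frobenius-type data for numerical semigroups) are intrinsic enough to survive such identifications, but writing this out cleanly requires unpacking the precise definition of pseudomonoid equivalence used earlier in the paper.
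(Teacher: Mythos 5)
Your proposal is correct and follows essentially the same route as the paper: Example~\ref{ex: three} is exactly your exponential construction $A(G)=\spant\{e^{ax}:a\in G\}$ with $[e^{ax}\partial,e^{bx}\partial]=(b-a)e^{(a+b)x}\partial$, the simple family is obtained from torsion-free groups of distinct $\mathbb{Q}$-ranks (the paper uses $\mathbb{Q}$-subspaces $V_n$ where you use $\mathbb{Z}^n$), and the nonsimple family from numerical semigroups $M_{n,m}$ distinguished by the spectrum invariant of Proposition~\ref{pro: Specinvariance}. The only cosmetic difference is that the paper gets linear independence of the exponentials for free from their being eigenvectors of $ad(1\partial)$ with distinct eigenvalues rather than from a Vandermonde argument.
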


A machinery is obtained to find the set of eigenvalues of any element 
in a generalized Witt algebra. It uses formal calculus and in particular,  
the logarithmic derivative. It is stated in Theorem~\ref{thm: spectraltheorem}.

Finally, motivated by~\cite{Rud}, we discuss injective  
Lie algebra endomorphisms of generalized Witt algebras. 

In the case where the generalized Witt algebra 
possesses a ``discrete'' spectrum, one can show that 
such an endomorphism must essentially fix a nonzero ad-diagonal element. 
(See Theorem~\ref{thm: Jacobian}.)

As corollaries of this fact we can easily 
obtain information about endomorphisms of these Lie algebras and prove 
things such as:

\begin{thm}[Corollaries~\ref{cor: classWitt} and \ref{cor: Virasoro}]
Any nonzero Lie algebra endomorphism $f$ of the classical Witt algebra is 
actually an automorphism and furthermore, 
$$
f(x\partial)=(x+b)\partial
$$
for some $b \in \mathbf{k}$.

If $f$ is a nonzero Lie algebra endomorphism of the centerless Virasoro 
algebra, then $f$ is injective and 
$$
f(x\partial)=\frac{1}{a}x\partial
$$
for some nonzero integer $a$. 
However $f$ need not be onto.

More precisely, the centerless Virasoro algebra possesses injective 
Lie algebra endomorphisms which are not automorphisms.
\end{thm}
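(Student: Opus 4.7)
The plan is to reduce both parts of the statement to the machinery of Theorem~\ref{thm: Jacobian} and the spectral theorem (Theorem~\ref{thm: spectraltheorem}). Both $Witt(\mathbf{k}[x])$ and $Witt(\mathbf{k}[x,x^{-1}])$ contain $x\partial$ as a natural ad-diagonal element with discrete spectrum (respectively $\{-1,0,1,2,\dots\}$ and all of $\mathbf{Z}$), so the discrete-spectrum hypothesis of Theorem~\ref{thm: Jacobian} is satisfied, and any nonzero endomorphism $f$ is forced, after normalization, to send $x\partial$ to another nonzero ad-diagonal element whose spectrum is equivalent to the original. In both cases, injectivity of $f$ comes for free: the kernel of $f$ is an ideal, and both algebras are simple (already recorded as a corollary of Proposition~\ref{pro: idealcor}), so a nonzero $f$ must have trivial kernel.

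For the classical Witt algebra, the first task is to classify the ad-diagonal elements whose spectrum is equivalent to that of $x\partial$. Writing a candidate as $p(x)\partial$, the logarithmic-derivative analysis of Theorem~\ref{thm: spectraltheorem} produces the eigenvectors as scalar multiples of $p\cdot\exp(\lambda\int dx/p)$, which must lie in $\mathbf{k}[x]\partial$. The requirement that both eigenvalues $-1$ and $1$ occur (since they are inherited by $ad(f(x\partial))$ from $ad(x\partial)$) forces $p$ to be a monic linear polynomial, so $f(x\partial)=(x+b)\partial$ for some $b\in\mathbf{k}$. The bracket $[x\partial,x^{n+1}\partial]=n\,x^{n+1}\partial$ then forces $f(x^{n+1}\partial)$ into the $n$-eigenspace of $ad((x+b)\partial)$, which the spectral theorem identifies as the line spanned by $(x+b)^{n+1}\partial$; the structure constants in $[x^{m+1}\partial,x^{n+1}\partial]=(n-m)x^{m+n+1}\partial$ pin down the scalars uniquely. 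Since the elements $\{(x+b)^{n+1}\partial\}_{n\geq -1}$ again span all of $Witt(\mathbf{k}[x])$, $f$ is surjective and hence an automorphism.

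The argument for the centerless Virasoro algebra is parallel, with the twist that the spectrum $\mathbf{Z}$ is an abelian group. A Laurent-polynomial version of the same logarithmic-derivative computation shows that if $f(x\partial)=p\partial$ has discrete spectrum containing $\mathbf{Z}$, then $p$ must be a scalar multiple of $x$; the requirement $\mathbf{Z}\subseteq c\mathbf{Z}$ then forces $c=1/a$ for some nonzero integer $a$, giving $f(x\partial)=\tfrac{1}{a}x\partial$ as claimed. For the failure of surjectivity when $|a|\geq 2$, I would exhibit the explicit map defined on the standard basis $e_n=x^{n+1}\partial$ by $e_n\mapsto\tfrac{1}{a}e_{an}$; a direct check against $[e_m,e_n]=(n-m)e_{m+n}$ confirms it respects brackets, it is manifestly injective, but $e_1=x^2\partial$ is not in its image whenever $|a|\geq 2$.

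The main obstacle I anticipate is the classification step that identifies $f(x\partial)$: translating the abstract output of Theorem~\ref{thm: Jacobian} into the specific forms $(x+b)\partial$ and $\tfrac{1}{a}x\partial$ requires careful use of the spectral theorem to rule out spurious candidates (for instance, $c(x+b)\partial$ with $c\neq 1$ in the classical case, or $(x+\alpha)\partial$ with $\alpha\neq 0$ in the Virasoro case, where the spectrum would only be a half-group rather than all of $\mathbf{Z}$). Once $f(x\partial)$ is nailed down, the determination of $f$ on the remaining basis elements by eigenvalue matching and bracket comparison is routine.
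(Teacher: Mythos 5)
Your overall strategy is the paper's: injectivity comes from simplicity of both algebras (via Proposition~\ref{pro: idealcor} / Corollary~\ref{cor: simplecor}), the form of $f(x\partial)$ and surjectivity come from Theorem~\ref{thm: Jacobian}, and the failure of the Jacobian conjecture for the centerless Virasoro algebra is witnessed by an explicit family of endomorphisms $e_n\mapsto c_n e_{an}$. Your normalization $c_n=\tfrac1a$ differs from the paper's $c_n=a^{-(n+1)}$, but both satisfy the required cocycle condition $ac_mc_n=c_{m+n}$, so your counterexample is fine.

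Where you diverge is the step that pins down $f(x\partial)$, and there your sketch has a gap. You propose to classify, via the logarithmic derivative, all $p\partial$ whose spectrum contains $\{-1,0,1,\dots\}$ (resp.\ $\mathbb{Z}$), and you assert that the presence of the eigenvalues $\pm1$ ``forces $p$ to be a monic linear polynomial.'' As written this does not dispose of $\deg p\ge 2$: for $W(p)=0$ you must show $p\,e^{\int a/p}\notin\mathbf{k}[x]$ for some required eigenvalue $a$, and for $W(p)=1$ with $\deg p\ge 2$ you must show the explicit eigenvector $px^N e^{\int(\cdot)}$ of Theorem~\ref{thm: specRR}(c) fails to be a polynomial for some $N\ge 2$ (e.g.\ $p=x^2+x$ has spectrum exactly $\{-1,0,1\}$, so it is excluded, but only after a computation). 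These arguments can be supplied, but they are exactly what Theorem~\ref{thm: Jacobian} lets you avoid: it already expresses $f(e_0)$ in the graded basis as $\tfrac1a e_0+k'e_I$ with $aG\subseteq G$, and the proof of Lemma~\ref{lem: notselfcontaining} forces $a=1$ and $I=-1$ for $G=\{-1,0,1,\dots\}$, giving $f(x\partial)=(x+b)\partial$ and surjectivity at once; for $G=\mathbb{Z}$ with its standard order no $I$ can satisfy $I\preceq' ag$ for all $g$, so case (a) applies and $f(x\partial)=\tfrac1a x\partial$ with image $\Theta(a\mathbb{Z})$. Your subsequent determination of $f$ on the remaining basis vectors and the spanning argument for surjectivity are therefore redundant (though correct in spirit). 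In short: same skeleton as the paper, but your replacement of the middle step by a direct spectral classification is both harder than necessary and incomplete as sketched.
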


One should compare this to the Jacobian conjecture for the classical 
Weyl algebra which states that any nonzero algebra endomorphism is an 
automorphism. This conjecture is still open. 
The classical Witt algebra is the Lie algebra of derivations of the 
classical Weyl algebra. (See~\cite{Cou}).

We remark that the automorphisms of the centerless Virasoro algebra 
were known and studied for example in \cite{Dok}.

This completes this introductory overview.

\section{Generalized Weyl algebras}

Let $\mathbf{k}[[x]]$ be the power series algebra over
$\mathbf{k}$, and let $\RR$ be its field of fractions. Note, since
$\mathbf{k}[[x]]$ is a local ring with maximal ideal $(x)$, $\RR$ is
obtained from $\mathbf{k}[[x]]$ by inverting $x$. Thus every element
$g \in \RR$ can be written in the form:

$$
g=\sum_{i=N}^\infty \alpha_i x^i
$$
for suitable $\alpha_i \in \mathbf{k}$ and $N \in \mathbb{Z}$. 

Notice that $\RR$ acts on itself by left multiplication and this gives us
a monomorphism of $\mathbf{k}$ vector spaces:

$$
\tau : \RR \rightarrow \Endo_{\mathbf{k}}(\RR).
$$

Furthermore, there also exists $\partial \in \Endo_{\mathbf{k}}(\RR)$
which corresponds to formal differentiation with respect to $x$, i.e.,
$$
\partial (\sum_{i=N}^\infty \alpha_i x^i) = \sum_{i=N}^\infty i\alpha_i
x^{i-1}.
$$
It is easy to verify that $\partial(g)=0$ if and only if $g$ is a constant.

\begin{defn}
A stable algebra $A$ is a subalgebra of $\RR$
with the property that $\partial(A) \subseteq A$.
\end{defn}

\begin{rem}
Three important examples of stable algebras are the polynomial
algebra $\mathbf{k}[x]$, the power series algebra $\mathbf{k}[[x]]$,
and the Laurent polynomial algebra $\mathbf{k}[x,x^{-1}]$.
(Recall a Laurent polynomial is an element of the form
$ \sum_{i=N}^{M} \alpha_i x^i$ for suitable $N,M \in \mathbb{Z}$ and
$\alpha_i \in \mathbf{k}$.)
\end{rem}

\begin{defn}
Given a stable algebra $A$, we define $Weyl(A)$ to be the subalgebra
of $\Endo_{\mathbf{k}}(\RR)$ generated by $\tau(A)$ and $\partial$. Thus,
$Weyl(A)$ is an associative algebra with identity element equal to 
the identity endomorphism
of $\RR$. We will identify $A$ with its image 
$\tau(A) \subseteq \Endo_{\mathbf{k}}(\RR)$ from
now on.
\end{defn}

\begin{lem}
Let $A$ be a stable algebra. 
For any $f \in A$, one has $ \partial f - f \partial = f' $ in $Weyl(A)$.
Thus for any $\alpha \in Weyl(A)$, one has $\alpha = \sum_{i=0}^{N} \alpha_i
\partial^i$ for suitable $N \in \mathbb{N}$ and $\alpha_i \in A$.

Furthermore, if $\{ e_i | i \in I \}$ is a $\mathbf{k}$-basis for $A$, then 
$\{ e_i \partial^j | i \in I, j \in \mathbb{N} \}$ is a $\mathbf{k}$-basis
for $Weyl(A)$.
\end{lem}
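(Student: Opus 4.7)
The plan is to prove the three claims in order, each building on the previous.

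\textbf{First}, for the commutation relation, I would verify $\partial f - f\partial = f'$ as endomorphisms of $\RR$ by applying both sides to an arbitrary $g \in \RR$: the Leibniz rule gives $\partial(fg) = f'g + f\partial(g)$, which is exactly the identity required. Stability of $A$ ensures $f' \in A$, so the relation actually lives in $Weyl(A)$; this is the key property of stable algebras that makes $Weyl(A)$ well-behaved.

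\textbf{Second}, for the spanning claim, I would use the commutation relation as a rewriting rule. Every element of $Weyl(A)$ is a $\mathbf{k}$-linear combination of words in the alphabet $A \cup \{\partial\}$. Using $\partial f = f\partial + f'$ iteratively, one can push every $\partial$ past every $A$-factor, at the cost of lower-order terms whose coefficients still lie in $A$ (again by stability). A straightforward induction (on word length, or more cleanly on the number of inversions where a $\partial$ stands to the left of an $A$-factor) shows every element can be put in the normal form $\sum_{i=0}^N \alpha_i \partial^i$ with $\alpha_i \in A$.

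\textbf{Third}, I would establish linear independence of $\{e_i \partial^j\}$. Since $\{e_i\}$ is a $\mathbf{k}$-basis of $A$, this reduces to showing that if $\sum_{j=0}^N \alpha_j \partial^j = 0$ as an endomorphism of $\RR$ with $\alpha_j \in A$, then every $\alpha_j = 0$. I would test this operator against $x^k \in \RR$ for every nonnegative integer $k$. Since $\partial^j(x^k) = k(k-1)\cdots(k-j+1)\, x^{k-j}$, multiplying the resulting vanishing identity through by $x^{N-k}$ yields, for every $k \geq 0$,
$$
\sum_{j=0}^N k^{(j)} \alpha_j x^{N-j} = 0 \quad \text{in } \RR,
$$
where $k^{(j)} = k(k-1)\cdots(k-j+1)$ is the falling factorial, a polynomial of degree $j$ in $k$. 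Reading the left-hand side as a polynomial in the integer variable $k$ with coefficients in $\RR$ which vanishes for infinitely many $k$, each power-of-$k$ coefficient must be zero. The leading coefficient (of $k^N$) is simply $\alpha_N$, so $\alpha_N = 0$; induction on $N$ finishes the argument.

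I expect the main obstacle to be this third step: the first two parts amount to a routine Leibniz computation and a formal rewriting argument, but linear independence requires choosing an adequate family of test vectors in $\RR$ and extracting a polynomial identity from the resulting equations. The Vandermonde-type argument based on falling factorials is the natural mechanism, but one has to recognize that $\RR$ (not merely $A$) is the correct ambient space for evaluation, since elements like $x^k$ for various $k$ need not belong to $A$ in general.
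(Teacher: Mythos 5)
Your proof is correct and complete; the paper simply declares this lemma ``standard'' and leaves it to the reader, and your three-step argument (Leibniz computation, rewriting to normal form, linear independence by testing on powers of $x$ inside $\RR$) is exactly that standard proof. The only remark worth making is that the linear-independence step can be done marginally more simply by evaluating $\sum_{j=0}^{N}\alpha_j\partial^j$ at $1, x, x^2, \dots, x^N$ in increasing order, which yields a triangular system forcing $\alpha_0=0$, then $\alpha_1=0$, and so on, but your Vandermonde-type argument with falling factorials is equally valid since characteristic zero guarantees the nonnegative integers are infinitely many distinct scalars.
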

\begin{proof} The proof is standard and is left to the reader.
\end{proof} 

\begin{rem}
$Weyl(\mathbf{k}[x])$ is the classical Weyl algebra. It is a simple
algebra which has no zero divisors, (see~\cite{Cou}). 
In general, one can define an order on $Weyl(\RR)$ such that the
order of a nonzero element is equal to the highest exponent of $\partial$
in its canonical expression and is defined to be $-\infty$ for the 
zero element.

Then one shows that $\Order (\alpha \beta) = \Order (\alpha) + \Order (\beta)$
for any $\alpha, \beta \in Weyl(\RR)$ (see~\cite{Cou}) 
and it easily follows that
$Weyl(\RR)$ has no zero divisors. Hence, $Weyl(A)$,
which is a subalgebra of $Weyl(\RR)$, has no zero divisors in 
general. Note however, that in general, $Weyl(A)$ need not be simple.
\end{rem}

\section{Generalized Witt algebras}

\begin{defn}
\label{defn: Witt}
Let $Witt(A)$ be the subspace of $Weyl(A)$ consisting of the order 1 elements
together with zero. Thus $\alpha \in Witt(A)$ if $\alpha$ can be written
as $f\partial$ for some $f \in A$. 
\end{defn}

It is easy to check that $Witt(A)$ is a Lie subalgebra of $Weyl(A)$. (Note,
it is not a subalgebra of $Weyl(A)$.)

If $\{ e_i\}_{i \in I}$ is a $\mathbf{k}$-basis 
for $A$ then $\{ e_i \partial \}_{i \in I}$
is a $\mathbf{k}$-basis for $Witt(A)$.

Proposition~\ref{pro: otherdefn} shows how our definition is related to 
the one found in \cite{Dok}.

\begin{rem}
$Witt(\mathbf{k}[x])$ is the classical Witt algebra. It is the Lie
algebra of derivations of the classical Weyl algebra (see~\cite{Cou}),
and is a simple Lie algebra. However, in general, $Witt(A)$ is not neccessarily
simple. 
$Witt(\mathbf{k}[x,x^{-1}])$ is called the centerless Virasoro algebra
in the literature. (See~\cite{Kap}.)
\end{rem}

In general, we cannot claim that $Witt(A)$ is simple, but these generalized
Witt algebras do share one important common property - they are 
self-centralizing.

\begin{defn}
Given a Lie algebra $\mathfrak{L}$ and an element $l \in \mathfrak{L}$,
we define the centralizer of $l$, $C(l) = \{ x \in \mathfrak{L} | [l,x]=0 \}$.
Notice, by the Jacobi identity, $C(l)$ is always a Lie subalgebra of
$\mathfrak{L}$ containing $l$.
\end{defn}

\begin{pro}
\label{pro: TFAEsc}
Given a Lie algebra $\mathfrak{L}$, the following conditions are equivalent.
\\
\noindent
(a) For any nonzero $l \in \mathfrak{L}$, $[l,x]=0$ implies $x=\beta l$
for some $\beta \in \mathbf{k}$. \\
\noindent
(b) $C(l)$ is one dimensional for all nonzero $l \in \mathfrak{L}$. \\
\noindent
(c) $\mathfrak{L}$ does not contain any abelian Lie algebras of dimension
greater than one. \\
\noindent
(d) If $\alpha, \beta \in \mathfrak{L}$ are linearly independent, 
then $[\alpha,\beta] \neq 0$.
\end{pro}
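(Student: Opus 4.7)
The plan is to prove the four conditions equivalent by establishing (a) $\Leftrightarrow$ (b) as a direct restatement, and then closing the cycle via (a) $\Rightarrow$ (c) $\Rightarrow$ (d) $\Rightarrow$ (a). Each step is a short argument that relies only on the definition of the centralizer and elementary linear algebra; no deeper Lie-theoretic input is needed.

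For (a) $\Leftrightarrow$ (b), I would use that $[l,l]=0$, so $l \in C(l)$ and hence $\mathbf{k}l \subseteq C(l)$ whenever $l$ is nonzero. Condition (a) is precisely the reverse inclusion $C(l) \subseteq \mathbf{k}l$, so combined with the automatic one it is equivalent to $\dim C(l) = 1$, which is (b).

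Next, for (a) $\Rightarrow$ (c), I would argue contrapositively: an abelian Lie subalgebra $\mathfrak{A}$ of dimension at least two contains linearly independent elements $\alpha, \beta$ with $\alpha \neq 0$ and $[\alpha,\beta] = 0$, and (a) would then force $\beta \in \mathbf{k}\alpha$, a contradiction. For (c) $\Rightarrow$ (d), linearly independent $\alpha, \beta$ with $[\alpha,\beta]=0$ would span a two-dimensional abelian subalgebra, contradicting (c). Finally, for (d) $\Rightarrow$ (a), take nonzero $l$ and arbitrary $x$ with $[l,x]=0$; either $x=0$ (in which case $x = 0\cdot l$ suffices) or, if $x$ were not a scalar multiple of $l$, then $l$ and $x$ would be linearly independent and (d) would contradict $[l,x]=0$.

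I do not anticipate a real obstacle here --- the statement is a routine unwinding of the definition of the centralizer. The only mild subtlety worth noting is that condition (a) permits $x = 0$ via $\beta = 0$, so the passage from (d) back to (a) must treat this trivial case separately rather than implicitly assuming that the annihilated element is nonzero.
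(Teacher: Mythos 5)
Your proof is correct and complete; the paper simply declares this proposition ``easy and left to the reader,'' and your cycle (a)\,$\Leftrightarrow$\,(b), (a)\,$\Rightarrow$\,(c)\,$\Rightarrow$\,(d)\,$\Rightarrow$\,(a) is exactly the routine argument the authors intend. Your remark about handling $x=0$ separately in (d)\,$\Rightarrow$\,(a) is the one small point worth being careful about, and you handle it properly.
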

\noindent
\begin{proof}
The proof is easy and left to the reader.
\end{proof}

\begin{defn}
A Lie algebra $\mathfrak{L}$ is said to be self-centralizing if 
it satisfies any of the equivalent conditions of 
Proposition~\ref{pro: TFAEsc}.
\end{defn}

\begin{rem}
Thus a self-centralizing Lie algebra is one where the centralizers have
as small a dimension as possible. Notice that a self-centralizing Lie
algebra of dimension strictly greater than one must have trivial center.
Furthermore, a Lie algebra isomorphic to a self-centralizing one, is itself
self-centralizing.
\end{rem}

\begin{rem}
It is easy to check that the nonabelian Lie algebra of dimension two is
self-centralizing but is not simple. Similarly $\mathfrak{sl}_n$, the
Lie algebra of $n \times n$, trace zero matrices is simple but 
contains an abelian Lie subalgebra of dimension greater than one 
 for $n \geq 3$ and hence is not self-centralizing.
\end{rem}

We now make a useful observation:

\begin{thm}
\label{thm: sc}
For any stable algebra $A$, $Witt(A)$ is a self-centralizing
Lie algebra.
\end{thm}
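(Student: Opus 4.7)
The plan is to verify criterion (d) of Proposition~\ref{pro: TFAEsc}: if $f\partial$ and $g\partial$ are linearly independent elements of $Witt(A)$, then their Lie bracket in $Weyl(A)$ is nonzero.

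First I would compute the bracket explicitly using the commutation relation $\partial f - f\partial = f'$ from the preceding lemma. A short calculation gives
$$
[f\partial, g\partial] = (fg' - gf')\,\partial,
$$
so the bracket vanishes in $Witt(A)$ precisely when $fg' - gf' = 0$ in $A$. Since elements of $A$ are also elements of $\RR$, and $Weyl(A) \subseteq \Endo_{\mathbf{k}}(\RR)$ has no zero divisors, this reduces the self-centralizing property to an entirely commutative question inside $\RR$.

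Next I would handle the nondegenerate case: assume both $f$ and $g$ are nonzero (if either is zero, the corresponding $f\partial$ or $g\partial$ is zero and linear independence already fails). In the field $\RR$ we may form the quotient $f/g$, and the quotient rule gives
$$
\partial\!\left(\frac{f}{g}\right) = \frac{f'g - fg'}{g^2} = 0.
$$
Since the kernel of $\partial$ on $\RR$ consists precisely of the constants $\mathbf{k}$ (as noted just after the introduction of $\partial$), we conclude $f/g = c$ for some $c \in \mathbf{k}$, i.e.\ $f = cg$, and therefore $f\partial = c(g\partial)$, contradicting linear independence.

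The argument is essentially a one-line calculation once one notices that the bracket formula reduces the problem to the standard fact that a function with vanishing derivative is constant. There is no real obstacle here; the only small point to be careful about is that although $f/g$ need not lie in $A$, the computation takes place legitimately in the ambient field $\RR$, and the conclusion $f = cg$ nonetheless lies in $A$. By condition (d) of Proposition~\ref{pro: TFAEsc}, this establishes that $Witt(A)$ is self-centralizing.
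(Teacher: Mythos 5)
Your proof is correct and follows essentially the same route as the paper: compute $[f\partial,g\partial]=(fg'-gf')\partial$, recognize the vanishing of $fg'-gf'$ as the vanishing of the derivative of a quotient in the field $\RR$, and invoke the fact that $\ker\partial$ consists of the constants to conclude proportionality. The only cosmetic difference is that you verify condition (d) of Proposition~\ref{pro: TFAEsc} while the paper verifies condition (b), and you differentiate $f/g$ where the paper differentiates $g/f$; these are immaterial.
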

\begin{proof}
Let $f \partial$ be a nonzero element of $Witt(A)$. Suppose
$[f\partial, g\partial]=0$. Then as
$[f\partial, g\partial] = (fg'-gf')\partial$, we conclude
that $fg'-gf'=0$ in $A \subseteq \RR$.
 
Then we can rewrite
$fg'-gf'=0$ as $(g/f)'f^2=0$ in $\RR$ which is possible since $f$ is not the
zero element. Since the only elements in $\RR$ which have
zero derivative, are the constants, we conclude that $g/f$ is a constant
or that $g$ is a multiple of $f$. Thus we conclude $C(f\partial)$ is one 
dimensional.
This concludes the proof.
\end{proof}

\begin{rem}
It follows immediately from Theorem~\ref{thm: sc}, that the classical
Witt algebra and the centerless Virasoro algebra are self-centralizing.
\end{rem}

\begin{defn}
Recall that a 
Lie algebra is called semisimple if it does not possess any nontrivial 
solvable ideals. It is a standard fact that a Lie algebra is semisimple if 
it does not possess any nontrivial abelian ideals. (See~\cite{Jacobson}.)   
\end{defn}

Let us record some consequences of the self-centralizing property in the
following proposition.

\begin{pro}
\label{pro: sc}
Let $\mathfrak{L}$ be a self-centralizing Lie algebra, then:

\noindent
(a) Any Lie subalgebra is also self-centralizing. \\
\noindent
(b) If $\mathfrak{L}$ possesses a finite dimensional ideal $I$ of 
dimension $n>1$,
then $\dime(\mathfrak{L}) \leq n^2$. If $\mathfrak{L}$ possesses an ideal 
of dimension 1,
then $\dime(\mathfrak{L}) \leq 2$. \\
\noindent
(c) If $\mathfrak{L}$ is infinite dimensional, then $\mathfrak{L}$
does not possess any finite dimensional, nontrivial ideals. \\
\noindent
(d) If $\alpha, \beta$ are two linearly independent elements of $\mathfrak{L}$
and $x$ is a common eigenvector of $ad(\alpha)$ and $ad(\beta)$
then $x$ is a multiple of $[\alpha, \beta]$. \\ 
\noindent
(e) If $\alpha, \beta$ are two linearly independent elements of
$\mathfrak{L}$, then there is no basis for $\mathfrak{L}$, in which
both $\alpha$ and $\beta$ are ad-diagonal. \\ 
\noindent
(f) $\mathfrak{L}$ is indecomposable i.e., $\mathfrak{L}$ cannot
be written as a direct sum of two nonzero Lie algebras. \\
(g) If $\dime(\mathfrak{L})>2$ then $\mathfrak{L}$ is semisimple. \\
\noindent
(h) If $\mathfrak{L}$ is finite dimensional and $\mathbf{k}$ is algebraically
closed, then $\mathfrak{L}$ is either isomorphic to the nonabelian Lie algebra
of dimension two, $\mathfrak{sl}_2$, or a Lie algebra of dimension less than
or equal to one. 
\end{pro}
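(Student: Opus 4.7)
My plan is to handle the eight items in roughly increasing order of difficulty, re-using earlier parts to shorten later ones and relying throughout on the equivalent characterizations in Proposition~\ref{pro: TFAEsc}.

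First, the easy items. For (a), condition (c) of Proposition~\ref{pro: TFAEsc} is inherited by subalgebras, so there is nothing to prove. For (f), if $\mathfrak{L}=\mathfrak{L}_1\oplus\mathfrak{L}_2$ with both summands nonzero, I would pick nonzero $\ell_i\in\mathfrak{L}_i$ and observe that $[\ell_1,\ell_2]=0$, giving an abelian subalgebra of dimension two and contradicting condition (c). Part (c) will fall out of (b). For (d), given $[\alpha,x]=\lambda x$ and $[\beta,x]=\mu x$, I would apply Jacobi to conclude $[[\alpha,\beta],x]=\mu\lambda x-\lambda\mu x=0$; since $\alpha,\beta$ are linearly independent, condition (d) of Proposition~\ref{pro: TFAEsc} gives $[\alpha,\beta]\ne 0$, so the self-centralizing property forces $[\alpha,\beta]$ and $x$ to be proportional. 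Part (e) is then immediate: if $\alpha,\beta$ were both ad-diagonal in a common basis, every basis vector would be a common eigenvector and hence, by (d), a scalar multiple of the single element $[\alpha,\beta]$, which is impossible for a basis of size $\ge 2$.

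The main technical step is (b), which I would prove via the adjoint action. Let $I$ be a finite dimensional ideal of dimension $n$ and consider the Lie algebra homomorphism $\rho:\mathfrak{L}\to\mathfrak{gl}(I)$, $x\mapsto \mathrm{ad}(x)|_I$. If $n>1$ and $x\in\ker(\rho)$, then for every nonzero $i\in I$ we have $x\in C(i)=\mathbf{k}\,i$. Choosing two linearly independent elements $i_1,i_2\in I$ forces $x=0$, so $\rho$ is injective and $\dime(\mathfrak{L})\le n^2$. If $n=1$, write $I=\mathbf{k}\,i$; then $\ker(\rho)=C(i)$ is one dimensional and the image lies in $\mathfrak{gl}(I)\cong\mathbf{k}$, giving $\dime(\mathfrak{L})\le 2$. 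Part (c) is then the contrapositive.

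For (g) I would observe that an abelian ideal $I$ of $\mathfrak{L}$ is itself an abelian Lie subalgebra, so by self-centralizing it must satisfy $\dime(I)\le 1$; but an ideal of dimension one forces $\dime(\mathfrak{L})\le 2$ by (b), contradicting the hypothesis $\dime(\mathfrak{L})>2$. Hence $\mathfrak{L}$ has no nontrivial abelian ideals, and semisimplicity follows from the standard criterion recalled in the definition above.

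The subtlest item is (h); this is where I expect to lean on the classical structure theory and where the real work lies. Over an algebraically closed field of characteristic zero, I would dispose of the dimensions $\le 2$ cases directly: a $2$-dimensional self-centralizing algebra cannot be abelian, so it is the unique nonabelian one. If $\dime(\mathfrak{L})>2$, then by (g) $\mathfrak{L}$ is semisimple and by (f) it is indecomposable, hence simple (since semisimple Lie algebras over $\mathbf{k}$ decompose as direct sums of simples). Now take a Cartan subalgebra $H\subseteq\mathfrak{L}$; it is abelian, so by self-centralizing $\dime(H)\le 1$, forcing $\mathfrak{L}$ to have rank one. The classification of simple Lie algebras over an algebraically closed field of characteristic zero then identifies $\mathfrak{L}$ with $\mathfrak{sl}_2$, completing the proof. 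The main obstacle is simply marshalling the classification result cleanly; everything else is bookkeeping using Jacobi and the $C(l)$ being one-dimensional.
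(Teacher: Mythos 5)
Your proof is correct and follows essentially the same route as the paper: the adjoint representation on the ideal for (b) (with injectivity obtained from the one-dimensionality of centralizers), (c) and (g) deduced from (b), and (h) via semisimplicity, indecomposability, and the rank-one Cartan subalgebra argument. The only differences are cosmetic — e.g., you get injectivity of $\rho$ by intersecting two centralizers rather than noting $I\subseteq C(z)$ forces $\dime C(z)>1$, and you spell out the details of (d), (e), (f) that the paper leaves to the reader.
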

\begin{proof}
(a) follows at once from the definition of a self-centralizing Lie algebra.
To prove (b), suppose $I$ is a nontrivial, finite dimensional ideal of
dimension $n$. Then
define
$\theta: \mathfrak{L} \rightarrow \Endo_{\mathbf{k}}(I)$ by
$$
\theta(x) = ad(x) |_I.
$$
Note that $\Endo_{\mathbf{k}}(I)$
is finite dimensional of dimension $n^2$. If $n>1$, then  
$\theta$ is injective by the self-centralizing property of $\mathfrak{L}$.
This is because if $z$ were a nonzero element in $\kerl (\theta)$, then
$I \subseteq C(z)$. However, $C(z)$ has dimension 1 as $\mathfrak{L}$
is self-centralizing, while $I$ is assumed to have dimension bigger
than 1 giving a contradiction.
It follows easily from the injectivity of $\theta$ that 
$$
\dime(\mathfrak{L}) \leq \dime(\Endo_{\mathbf{k}}(I))=n^2.
$$
If $n=1$ and $x$ is a generator of $I$, then $\kerl (\theta)$ is codimension
at most one in $\mathfrak{L}$. However, $\kerl (\theta)=C(x)=I$ since
$\mathfrak{L}$ is self-centralizing. Thus $\dime(\mathfrak{L}) \leq 2$.

(c) follows quickly from (b). (d) and (e) follow from quick 
calculations and the self-centralizing property. (f) is a trivial verification.

For (g), note that if $\dime(\mathfrak{L}) > 2$, then by (b), $\mathfrak{L}$
does not possess any nontrivial ideals of dimension one. On the other hand,
because $\mathfrak{L}$ is self-centralizing, it cannot possess any
 abelian ideals of dimension greater than one and so we conclude that
$\mathfrak{L}$ does not possess any nontrivial abelian ideals and hence
is semisimple.

For (h), note that if $\dime(\mathfrak{L}) \leq 2$, the result is easy.
So we can assume $2 < \dime(\mathfrak{L}) < \infty$, and so by (g),
$\mathfrak{L}$ is semisimple. From standard results (see~\cite{Jacobson} 
or \cite{Hum}),
since we are over a field of characteristic zero, $\mathfrak{L}$
is the direct sum of simple Lie algebras. However by (f), we see
that in fact $\mathfrak{L}$ must be simple. 

If we assume $\mathbf{k}$ to be algebraically closed, 
then the Cartan subalgebra of $\mathfrak{L}$ is abelian, and since 
$\mathfrak{L}$ is self-centralizing, it must have rank one. 
From the classification
of simple finite dimensional Lie algebras over an algebraically closed
field, we see that $\mathfrak{L}$ is isomorphic to $\mathfrak{sl}_2$.
\end{proof}

\begin{rem}
By Proposition~\ref{pro: sc}, 
we see that there aren't very many finite dimensional
self-centralizing Lie algebras. Thus it is somewhat striking that
all of the generalized Witt algebras are self-centralizing. 

We will see later that we can find infinitely many nonisomorphic
generalized Witt algebras so that the class of self-centralizing
Lie algebras is pretty rich. In the class of infinite dimensional
Lie algebras, Proposition~\ref{pro: sc} shows that being self-centralizing
is a stronger condition than being semisimple and yet is usually easier
to verify than simplicity. 
\end{rem}

Since stable algebras $A$ are infinite dimensional in all but some
trivial cases, $Witt(A)$ is usually infinite dimensional and since
it is self-centralizing by Theorem~\ref{thm: sc}, it follows 
by Proposition~\ref{pro: sc}, 
that $Witt(A)$ is both semisimple and indecomposable.
However there are examples where $Witt(A)$ is simple and there are examples
where it is not. We will discuss this more later on.

\section{Eigenvalues and eigenspaces}

We have seen that all generalized Witt algebras are self-centralizing.
Given a Lie algebra $\mathfrak{L}$, and $\alpha \in \mathfrak{L}$,
let $E_a(\alpha) \subseteq \mathfrak{L}$ be the eigenspace of $ad(\alpha)$
corresponding to the eigenvalue $a \in \mathbf{k}$. 

In this language, a self-centralizing Lie algebra $\mathfrak{L}$ 
is one such that
$$
\dime(E_0(\alpha))=1
$$ 
for all nonzero $\alpha \in \mathfrak{L}$.
We have seen that a generalized Witt algebra is self-centralizing
and hence satisfies this condition on the eigenspaces. We will
now extend this result by studying further constraints on these eigenspaces
in a generalized Witt algebra.

Before we can do this, we need to recall the concept of the logarithmic
derivative on $\RR$, and some of its basic properties.

\begin{defn}
Let $\mathbf{R}^\sharp$ denote the group of nonzero elements in the field $\RR$
under multiplication. (Recall $\RR$ is the field of fractions of
$\mathbf{k}[[x]]$.)
The logarithmic derivative $LD: \mathbf{R}^\sharp 
\rightarrow \RR$ is defined by
$$
LD(f) = \frac{f'}{f}.
$$
where $f'$ is the formal derivative of $f$. It is easy to check that
$LD$ is a group homomorphism from $(\mathbf{R}^\sharp, \times)$ to $(\RR,+)$.

It is also routine to see that $\kerl (LD)$ is exactly the constant functions.
Thus if $u,v \in \mathbf{R}^\sharp$ have $LD(u)=LD(v)$, then $u$ is a scalar multiple
of $v$.
\end{defn}

Now we are ready to prove an important lemma which generalizes
Theorem~\ref{thm: sc}.

\begin{lem}
\label{lem: eigen1}
If $f\partial \in Witt(\RR)$ is a nonzero element, then
$\dime(E_a(f\partial)) \leq 1$ for all $a \in \mathbf{k}$.
Furthermore, if $g\partial$ is a nonzero element in $E_a(f\partial)$,
then $g=fu$ where $LD(u) = a/f$.
\end{lem}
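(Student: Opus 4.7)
The plan is to directly translate the eigenvalue equation $[f\partial,g\partial] = a\,g\partial$ into an ODE in $\RR$, then solve it via the substitution $g = fu$ so the logarithmic derivative machinery applies.

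First, I would record the bracket computation from the proof of Theorem~\ref{thm: sc}: for any $g\partial \in Witt(\RR)$,
$$
[f\partial, g\partial] = (fg' - gf')\partial.
$$
So $g\partial \in E_a(f\partial)$ is equivalent to the identity $fg' - gf' = ag$ in $\RR$. The $a=0$ case already follows from Theorem~\ref{thm: sc}, so the substance is really the same style of argument extended to arbitrary $a$.

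Next, since $f$ is a nonzero element of the field $\RR$, any candidate $g \in \RR$ can be written uniquely as $g = fu$ with $u \in \RR$, and $g$ is nonzero iff $u$ is. Plugging $g = fu$ into $fg' - gf' = ag$ and using $g' = f'u + fu'$ collapses the left side to $f^2 u'$, so the equation becomes $f^2 u' = afu$, i.e.
$$
\frac{u'}{u} = \frac{a}{f},
$$
which is exactly $LD(u) = a/f$. Conversely any such $u$ gives an eigenvector $fu\,\partial$, so this characterizes the elements of $E_a(f\partial)$ completely.

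Finally, the dimension bound $\dim E_a(f\partial) \le 1$ follows immediately from the kernel property of $LD$ already recorded: if $g_1\partial, g_2\partial$ are two nonzero elements of $E_a(f\partial)$ with $g_i = fu_i$, then $LD(u_1) = a/f = LD(u_2)$, so $LD(u_1/u_2) = 0$, which forces $u_1/u_2$ to be a nonzero constant in $\mathbf{k}$, and hence $g_1$ is a scalar multiple of $g_2$. There is no real obstacle here; the only thing to be careful about is that the substitution $g = fu$ is legal precisely because we are working in the field $\RR$ (not in some smaller stable subalgebra $A$), which is exactly the hypothesis of the lemma.
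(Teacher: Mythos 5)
Your proposal is correct and follows essentially the same route as the paper's proof: both substitute $u = g/f$ (legal because $\RR$ is a field and $f \neq 0$), reduce the eigenvalue equation $fg'-gf'=ag$ to $LD(u)=a/f$, and then deduce the dimension bound from the fact that $\kerl(LD)$ consists of the constants. No gaps.
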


\begin{proof}
Suppose $g\partial$ is a nonzero
element in $E_a(f\partial)$. Then
\begin{align*}
\begin{split}
[f\partial,g\partial] &= a g\partial \\
(fg'-gf') \partial &= a g \partial \\
 (g/f)'f^2 &= ag
\end{split}
\end{align*}

Thus we conclude $ (g/f)'f = a(g/f) $. If we let $u=g/f$, this becomes
$ u'f = au $ or $LD(u) = a/f$. Thus we conclude $g=fu$ where $LD(u) = a/f$.
If $h\partial$ is another nonzero element in $E_a(f\partial)$, then
similarly we would conclude $h=fv$ where $LD(v) = a/f$.
However $LD(u)=LD(v)=a/f$ so $v$ is a scalar multiple of $u$ and hence
$h$ is a scalar multiple of $g$. Thus we see $\dime(E_a(f\partial)) \leq 1$
as we sought to show.
\end{proof}

Lemma~\ref{lem: eigen1} shows that for any nonzero $f\partial \in Witt(\RR)$,
and $a \in \mathbf{k}$, the eigenspace of $ad(f\partial)$ corresponding
to $a$ is at most one dimensional. It remains to decide when this eigenspace
is one dimensional and when it is zero dimensional.
To do this, it turns out we need to find the image of 
$LD: \RR^\sharp \rightarrow
\RR$. We will now introduce a few more concepts in formal calculus
 that will let us do this.

\begin{defn}
Given a nonzero $f \in \RR$, we can write
$$
f=\sum_{i=N}^\infty \alpha_i x^i
$$
where $\alpha_i \in \mathbf{k}$ for all $i \geq N$ and $\alpha_N \neq 0$.
$N$ is called the Weierstrass degree (see~\cite{Lang}) of $f$ and will
be denoted by $W(f)$. $\alpha_{-1}$ is called the residue of $f$ and will
be denoted $\res (f)$. We also define $W(0)=\infty$ and $\res (0)=0$.
\end{defn}

\begin{defn}
Let $U = \{ f \in \RR | W(f)=0 \}$. Then $f \in U$ if and only if
$f \in \mathbf{k}[[x]]$ and $f(0) \neq 0$ and this happens 
if and only if $f$ is a unit
of $\mathbf{k}[[x]]$. Thus $U$ is the group of units of $\mathbf{k}[[x]]$
under multiplication.
\end{defn}

We now collect some elementary properties of the Weierstrass degree
in the next lemma. The proof is simple and will be left to the reader.

\begin{lem}
Given nonzero $f \in \RR$, we can write 
$$
f = x^{W(f)}u
$$
with $u \in U$. Furthermore such an expression for $f$ is unique. \\
\noindent
Given $f,g \in \RR$,
$$
W(fg) = W(f) + W(g).
$$
\end{lem}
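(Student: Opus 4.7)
The plan is to verify each assertion directly from the definition of the Weierstrass degree, treating existence, uniqueness, and the product formula in that order. For existence, given nonzero $f = \sum_{i=N}^\infty \alpha_i x^i$ with $N = W(f)$ and $\alpha_N \neq 0$, I would simply factor $x^N$ out of the series and set
$$
u = \sum_{i=0}^\infty \alpha_{i+N} x^i.
$$
Since the constant term $u(0)=\alpha_N$ is nonzero, $u \in \mathbf{k}[[x]]$ is a unit, so $u \in U$ and $f = x^{W(f)} u$ as required.

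For uniqueness, suppose $f = x^M v = x^N u$ with $u,v \in U$, and without loss of generality $M \leq N$. Since we are working inside the field $\RR$ and $x^M \neq 0$, we can cancel to get $v = x^{N-M} u$ in $\RR$. If $N - M > 0$, then the right-hand side lies in $x\mathbf{k}[[x]]$, hence has vanishing constant term, contradicting $v \in U$. Therefore $M=N$, and then $u=v$ by cancellation.

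For the product formula, first dispose of the trivial case: if either $f$ or $g$ is zero, then $fg=0$ and both sides equal $\infty$ under the stated conventions. Assuming both $f$ and $g$ are nonzero, write $f = x^{W(f)} u$ and $g = x^{W(g)} v$ using existence, with $u,v \in U$. Multiplying gives
$$
fg = x^{W(f)+W(g)} (uv),
$$
and since $U$ is closed under multiplication (it is a group), $uv \in U$. By the uniqueness part just established, applied to $fg$, we conclude $W(fg) = W(f)+W(g)$.

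No step presents a genuine obstacle: the whole proof is bookkeeping built on the fact that $U$ is the group of units of $\mathbf{k}[[x]]$ and the standard uniqueness of the $x$-adic valuation. The only mild subtlety is remembering to carry out the cancellation step inside the field $\RR$ rather than the local ring $\mathbf{k}[[x]]$, and to handle the zero cases in the product formula separately so that the infinite-valued convention for $W(0)$ causes no trouble.
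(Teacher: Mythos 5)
Your proof is correct and complete; the paper itself leaves this lemma to the reader, and your argument (factor out $x^{W(f)}$, use cancellation in the field $\RR$ together with the nonvanishing constant term of a unit for uniqueness, then combine with the group structure of $U$ for the product formula) is exactly the standard argument the authors intend. The attention to the zero cases via the convention $W(0)=\infty$ is a nice touch, though strictly the displayed identity is only asserted in the paper for the series decomposition of nonzero $f$.
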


We now define formal integration:

\begin{defn}
Recall $(x)$ is the unique maximal ideal of $\mathbf{k}[[x]]$.
We define formal integration
$\int : \mathbf{k}[[x]] \rightarrow (x)$ by
\begin{align*}
\begin{split}
\int(\sum_{i=0}^\infty \alpha_i x^i) &= 
\sum_{i=0}^\infty \alpha_i \frac{x^{i+1}}{i+1} \\
&= \sum_{i=1}^\infty \alpha_{i-1} \frac{x^i}{i}
\end{split}
\end{align*}
It follows easily that $\int \in \Endo_{\mathbf{k}}(\mathbf{k}[[x]])$
and that if $f \in \mathbf{k}[[x]]$ is nonzero, 
$$
W(\int f) = W(f) + 1.
$$
Furthermore, we have of course 
$$
\partial (\int f) = f
$$ 
for all $f \in 
\mathbf{k}[[x]]$.
\end{defn}  

We will also need to compose two power series. Recall that 
given $g \in \mathbf{k}[[x]]$ and $f \in (x)$, we 
have a well-defined 
composition power series $g \circ f \in \mathbf{k}[[x]]$ given in the
following manner:
If $g = \sum_{i=0}^\infty \alpha_i x^i$ then 
$g \circ f \in \mathbf{k}[[x]]$ is given formally by 
$\sum_{i=0}^\infty \alpha_i f^i$.

We collect well-known results on this composition in the following proposition:

\begin{pro}
\label{pro: lastcompose}
If $g \in \mathbf{k}[[x]]$ and $f \in (x)$. Then there exists
a series $g \circ f \in \mathbf{k}[[x]]$ such that
$$
(g \circ f)' = (g' \circ f)f'.
$$
Furthermore, $(g\circ f)(0) = g(0)$ and $g \circ x = g$.
\end{pro}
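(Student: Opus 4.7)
The plan is to handle the four claims in turn, with the bulk of the work going into setting up $g\circ f$ coherently and then verifying the chain rule.

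First, I would establish that $g\circ f$ is a well-defined element of $\mathbf{k}[[x]]$. Write $g=\sum_{i=0}^{\infty}\alpha_i x^i$. Since $f\in(x)$ we have $W(f)\geq 1$, and by the multiplicativity of $W$ recorded in the previous lemma, $W(f^i)\geq i$. Hence for any fixed $n\geq 0$, only the indices $i\leq n$ can contribute to the coefficient of $x^n$ in the formal sum $\sum_{i=0}^{\infty}\alpha_i f^i$. That coefficient is therefore a finite $\mathbf{k}$-linear combination of coefficients of the $f^i$, and we may define $g\circ f\in\mathbf{k}[[x]]$ by declaring its $n$-th coefficient to be exactly this finite sum. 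Equivalently, the partial sums $g_N:=\sum_{i=0}^{N}\alpha_i x^i$ have images $g_N\circ f=\sum_{i=0}^{N}\alpha_i f^i$ that form a Cauchy sequence in the $(x)$-adic topology on $\mathbf{k}[[x]]$, with limit $g\circ f$.

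Next, the identities $(g\circ f)(0)=g(0)$ and $g\circ x=g$ fall out quickly. For the first, $W(f^i)\geq i\geq 1$ for $i\geq 1$, so only the $i=0$ term, namely $\alpha_0=g(0)$, contributes to the constant coefficient of $g\circ f$. For the second, $x^i\circ x=x^i$ on the nose, and the definition of composition is $\mathbf{k}$-linear in $g$ at the level of each coefficient, so $g\circ x=\sum_i\alpha_i x^i=g$.

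For the chain rule $(g\circ f)'=(g'\circ f)f'$, I would first verify it for monomials $g=x^n$. Here $(x^n)\circ f=f^n$ and $(x^n)'\circ f=n f^{n-1}$, so the identity reduces to $(f^n)'=nf^{n-1}f'$, which follows by an easy induction from the Leibniz rule $(uv)'=u'v+uv'$ in $\mathbf{k}[[x]]$. By $\mathbf{k}$-linearity of formal differentiation, both sides of the chain rule then agree for every polynomial $g=g_N$. To pass to the limit, I would use that formal differentiation $\partial\colon\mathbf{k}[[x]]\to\mathbf{k}[[x]]$ is continuous in the $(x)$-adic topology (it decreases Weierstrass degree by at most one), and that multiplication by $f'$ and composition with $f$ both preserve $(x)$-adic limits; applied to $g_N\to g$ this forces $(g\circ f)'=(g'\circ f)f'$.

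The main obstacle is the last step: legitimately interchanging formal differentiation with the infinite summation implicit in $g\circ f$. No deep idea is required, but one must either spell out coefficient-by-coefficient that both sides of the chain rule agree, or set up $(x)$-adic continuity of $\partial$, multiplication, and composition-with-$f$ carefully enough to justify passage to the limit from polynomial $g$'s. Everything else in the proposition is a quick consequence of $W(f^i)\geq i$.
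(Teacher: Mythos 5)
Your proof is correct and complete. Note that the paper itself offers no proof of this proposition --- it is introduced with ``We collect well-known results on this composition,'' and the statement is left as standard --- so there is no argument of the authors' to compare against; your writeup simply supplies the standard one. All the essential points are in place: the observation $W(f^i) = iW(f) \ge i$ (via the multiplicativity of $W$ from the preceding lemma) makes each coefficient of $\sum_i \alpha_i f^i$ a finite sum, which settles well-definedness, and it also immediately gives $(g\circ f)(0)=g(0)$ and $g\circ x = g$. Your handling of the chain rule --- verifying it on monomials via $(f^n)' = nf^{n-1}f'$, extending by linearity to the truncations $g_N$, and passing to the limit using the $(x)$-adic continuity of $\partial$, of multiplication by $f'$, and of composition with $f$ --- is exactly the right way to justify the interchange of differentiation with the infinite sum, and you correctly identify that interchange as the only step needing care. (Continuity of composition with $f$ follows from $W(h\circ f)\ge W(h)W(f)\ge W(h)$, and continuity of $\partial$ from $W(\partial h)\ge W(h)-1$; in characteristic zero there are no dropped terms to worry about.)
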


We are now ready to study the image of the logarithmic derivative
 $LD: \RR^\sharp \rightarrow \RR$.

\begin{lem}
\label{lem: notimage}
Let $u \in \RR^\sharp$, \\
\noindent
(a) If $W(u) \neq 0$ then $W(LD(u))=-1$ and $\res (LD(u))$ is equal
to $W(u)$ which is of course an integer. \\
\noindent
(b) If $W(u) = 0$ then $W(LD(u)) \geq 0$. \\
\noindent
(c) If $W(g) < -1$ or if $W(g) = -1$ and $\res (g)$ is not an integer,
 then $g$ is not in the image of $LD: \RR^\sharp \rightarrow
\RR$.
\end{lem}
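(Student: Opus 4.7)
The plan is to prove parts (a), (b), (c) in an order that lets each step feed into the next. Part (b) is the natural starting point since it controls the ``non-singular'' case and is used to prove (a), and part (c) then falls out as the contrapositive of (a) and (b) combined.

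First I will dispose of (b). If $W(u)=0$ then $u\in U$, which means $u$ is a unit of $\mathbf{k}[[x]]$, so $u$ and $1/u$ both lie in $\mathbf{k}[[x]]$. Since $\partial$ preserves $\mathbf{k}[[x]]$, the derivative $u'$ is also in $\mathbf{k}[[x]]$, and therefore $LD(u)=u'/u$ is a product of elements of $\mathbf{k}[[x]]$, so it lies in $\mathbf{k}[[x]]$. By definition this means $W(LD(u))\geq 0$.

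Next, for (a), I will use the previously established facts that $LD$ is a group homomorphism $(\RR^\sharp,\times)\to(\RR,+)$ and that every nonzero $u\in\RR$ factors uniquely as $u=x^{N}v$ with $N=W(u)$ and $v\in U$. Applying the homomorphism property gives
$$
LD(u)=LD(x^{N})+LD(v)=\frac{N}{x}+LD(v).
$$
Now $N\neq 0$ by hypothesis, so $N/x$ has Weierstrass degree $-1$ and residue $N$. By part (b), $W(LD(v))\geq 0$, so $LD(v)$ contributes nothing to the coefficients of $x^{-1}$ or any more negative power. Adding the two series thus yields $W(LD(u))=-1$ and $\res(LD(u))=N=W(u)$, which is an integer as claimed.

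Finally, (c) is just the contrapositive of (a) and (b) taken together. If $g$ were equal to $LD(u)$ for some $u\in\RR^\sharp$, then either $W(u)=0$, forcing $W(g)\geq 0$ by (b), or $W(u)\neq 0$, forcing $W(g)=-1$ with $\res(g)$ an integer by (a). Both alternatives exclude the hypotheses of (c), so no such $u$ exists. I do not anticipate a real obstacle here; the only step that requires any care is checking in (a) that $LD(v)$ truly cannot cancel the $N/x$ term, which is precisely what (b) guarantees, so the order of proof (b) before (a) is essential.
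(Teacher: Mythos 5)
Your proof is correct and matches the paper's intent: the paper leaves this lemma to the reader, saying it follows from writing $u$ as a Laurent series and computing $LD(u)$ explicitly, and your argument is exactly that computation, cleanly organized via the factorization $u = x^{W(u)}v$ with $v \in U$ and the homomorphism property of $LD$. The one point needing care --- that $LD(v)$ cannot cancel the $N/x$ term --- is correctly handled by proving (b) first, and the fact that $N \neq 0$ in $\mathbf{k}$ is guaranteed by characteristic zero.
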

\begin{proof}
The proof will be left to the reader. It 
follows from writing $u$ as a Laurent series and explicitly 
calculating $LD(u)$.
\end{proof}

We have seen in Lemma~\ref{lem: notimage}, conditions that ensure
an element $g \in \RR$ is not in the image of $LD: \RR^\sharp \rightarrow
\RR$. We now show that in the remaining situations, the element $g$ is in
the image.

First recall $e^x \in \mathbf{k}[[x]]$ is the power series given
by
$$
e^x = \sum_{i=0}^\infty \frac{x^i}{i!}.
$$
It is easy to verify that $\partial e^x = e^x$ and that 
$e^x$ evaluated at $x=0$ is $1$.

Given $g \in \mathbf{k}[[x]]$, $\int g$ lies in $(x)$, the maximal
ideal of $\mathbf{k}[[x]]$. Thus by Proposition~\ref{pro: lastcompose}
we can form the power series $e^x \circ (\int g)$ which we will denote
by $e^{\int g}$. It follows from the same proposition that
$$
\partial e^{\int g} = e^{\int g} \partial( \int g) = ge^{\int g}.
$$
Furthermore since $e^{\int g}(0)=e^x(0)=1$, we see that $e^{\int g} \in U$
for all $g \in \mathbf{k}[[x]]$.

We will use these facts in the next theorem.

\begin{thm}
\label{thm: imageLD}
Let $g \in \RR$. Then either: \\
\noindent
(a) $W(g) \geq 0$ and $g=LD(e^{\int g})$. \\
\noindent
(b) $W(g) = -1$ and $\res (g)$ is an integer
then $g=\frac{\res (g)}{x} + u$ for some unique $u \in \mathbf{k}[[x]]$
and we have $g=LD(x^{\res (g)}e^{\int u})$. \\
\noindent
(c) $W(g) < -1$ or $W(g) = -1$ and $\res (g)$ is not an integer
in which case $g$ is not in the image of $LD: \RR^\sharp \rightarrow \RR$.
\end{thm}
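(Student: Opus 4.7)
The plan is to treat the three cases separately, with case (c) already disposed of by Lemma~\ref{lem: notimage}. The remaining content is an explicit construction of preimages under $LD$, and the key tool is already assembled in the paragraph preceding the theorem: for any $g \in \mathbf{k}[[x]]$, the series $e^{\int g} \in U$ satisfies $\partial(e^{\int g}) = g\, e^{\int g}$, hence $LD(e^{\int g}) = g$. So each of cases (a) and (b) should reduce to verifying a formula and checking that the proposed preimage lies in $\RR^\sharp$.

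For case (a), I would simply observe that $W(g) \geq 0$ means $g \in \mathbf{k}[[x]]$, so $\int g \in (x)$ is defined and $e^{\int g} \in U \subseteq \RR^\sharp$ makes sense. The identity $LD(e^{\int g}) = g$ then follows immediately from the remarks recalled above. No further work is required.

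For case (b), the existence and uniqueness of the decomposition $g = \res(g)/x + u$ with $u \in \mathbf{k}[[x]]$ is automatic: writing $g = \sum_{i \geq -1} \alpha_i x^i$ with $\alpha_{-1} = \res(g)$, the element $u := g - \res(g)/x = \sum_{i \geq 0} \alpha_i x^i$ is the unique such series. Setting $n = \res(g) \in \mathbb{Z}$, the element $x^n e^{\int u}$ lies in $\RR^\sharp$ since $x^n \neq 0$ in $\RR$ and $e^{\int u} \in U$. Using that $LD$ is a group homomorphism from $(\RR^\sharp,\times)$ to $(\RR,+)$ together with the direct calculation $LD(x^n) = n x^{n-1}/x^n = n/x$ and $LD(e^{\int u}) = u$ from case (a), one gets
\[
LD(x^n e^{\int u}) = \frac{n}{x} + u = g,
\]
completing the case.

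The main obstacle, such as there is one, is conceptual rather than technical: one must recognize that $LD$ restricted to $U$ surjects onto $\mathbf{k}[[x]]$ (which is case (a)), and that the only additional contributions to the image come from factors of the form $x^n$ with $n \in \mathbb{Z}$ (which is what case (b) encodes, since the Weierstrass-degree-$(-1)$ obstruction in Lemma~\ref{lem: notimage}(a) pins down $\res$ to be the exponent of $x$). Once this structural picture is in hand, both (a) and (b) are one-line verifications using $\partial(e^{\int g}) = g\, e^{\int g}$ and the multiplicativity of $LD$.
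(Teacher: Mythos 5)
Your proposal is correct and follows essentially the same route as the paper: case (c) is delegated to Lemma~\ref{lem: notimage}, case (a) is the computation $LD(e^{\int g}) = g e^{\int g}/e^{\int g} = g$ using the facts recalled before the theorem, and case (b) uses the unique decomposition $g = \res(g)/x + u$ together with the homomorphism property of $LD$ applied to $x^{\res(g)} e^{\int u}$. There is nothing to add.
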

\begin{proof}
(c) follows from Lemma~\ref{lem: notimage}. For (a), assume $g$ has
$W(g) \geq 0$ so that $e^{\int g} \in U$. Then we calculate
$$
LD(e^{\int g}) = \frac{\partial e^{\int g}}{e^{\int g}}
= \frac{ g e^{\int g}}{e^{\int g}} = g
$$
and so (a) is proven. 

Assume $g$ as in the statement of (b). Then it
is obvious that we may write
$g= \frac{\res (g)}{x} + u$ with $u \in \mathbf{k}[[x]]$ determined uniquely.
Since $\res (g)$ is an integer $x^{\res (g)}e^{\int u}$ certainly defines
an element in $\RR^\sharp$. We compute
\begin{align*}
\begin{split}
LD(x^{\res (g)}e^{\int u}) &= \res (g)LD(x) + LD(e^{\int u}), \text{  since }
 LD \text{ is a homomorphism} \\
&= \res (g)\frac{1}{x} + u, \text{    using the calculation in (a) } \\
&= g.
\end{split}
\end{align*}
Thus we are done.

\end{proof}

We are now ready to complete the analysis of the eigenspaces of 
elements in $ad(Witt(\RR))$
which was started in Lemma~\ref{lem: eigen1}.

\begin{thm}[Spectral theorem for $\RR$]
\label{thm: specRR}
Let $f\partial$ be a nonzero element in $Witt(\RR)$. Then: \\
\noindent
(a) 
If $W(f)>1$, then $\dime(E_a(f\partial))=0$ for all nonzero $a \in \mathbf{k}$
and  
$$
\dime(E_0(f\partial))=1.
$$

\noindent
(b) 
If $W(f) \leq 0$, then $\dime(E_a(f\partial))=1$ for all $a \in \mathbf{k}$.
Furthermore, 
$$
fe^{\int \frac{a}{f}}\partial \in E_a(f\partial).
$$

\noindent
(c) 
If $W(f)=1$ then $\dime(E_a(f\partial))=0$ if $a \neq Nf'(0)$ for some
 integer $N$. $\dime(E_{Nf'(0)}(f\partial))=1$ for all $N \in \mathbb{Z}$.
Furthermore 
$$
fx^Ne^{\int (\frac{N(f'(0)x-f)}{fx})}\partial \in E_{Nf'(0)}(f\partial)
$$ 
for all $N \in \mathbb{Z}$.
\end{thm}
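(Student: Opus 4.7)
The plan is to read the eigenspace problem entirely through the logarithmic derivative and then invoke the classification of its image from Theorem~\ref{thm: imageLD}. By Lemma~\ref{lem: eigen1}, for any nonzero $f\partial \in Witt(\RR)$ we already have $\dime(E_a(f\partial)) \leq 1$, with a nonzero eigenvector necessarily of the form $fu\,\partial$ where $u \in \RR^\sharp$ satisfies $LD(u) = a/f$. Consequently, the only question is whether $a/f$ lies in $LD(\RR^\sharp)$: if it does, $E_a(f\partial)$ is one-dimensional and an explicit eigenvector can be recovered from the proof of Theorem~\ref{thm: imageLD}; if not, $E_a(f\partial) = 0$.

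The core computation is then the Weierstrass degree of $a/f$. Since $W$ is additive on products and $a \in \mathbf{k}$ contributes $W(a) = 0$ when nonzero, we have $W(a/f) = -W(f)$ for $a \ne 0$. I split the argument according to $W(f)$. If $W(f) > 1$, then for $a \ne 0$ we get $W(a/f) < -1$, so by part (c) of Theorem~\ref{thm: imageLD} the element $a/f$ is not in $LD(\RR^\sharp)$; and for $a = 0$ the constant function $u = 1$ gives $LD(u) = 0 = a/f$, so the eigenvector is $f\partial$ itself. This settles (a). If $W(f) \leq 0$, then $W(a/f) \geq 0$ for all $a \in \mathbf{k}$, so part (a) of Theorem~\ref{thm: imageLD} applies and gives $a/f = LD(e^{\int a/f})$; hence $u = e^{\int a/f}$ and the eigenvector is $fe^{\int a/f}\partial$, as claimed.

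The remaining case $W(f) = 1$ is the only one requiring a short residue calculation. Writing $f = f'(0)\,x + O(x^2)$, we have $W(a/f) = -1$ for nonzero $a$, and a direct expansion yields $\res(a/f) = a/f'(0)$. By Theorem~\ref{thm: imageLD}, $a/f$ lies in $LD(\RR^\sharp)$ precisely when this residue is an integer, i.e., when $a = Nf'(0)$ for some $N \in \mathbb{Z}$. To produce the explicit eigenvector in this case, I follow part (b) of Theorem~\ref{thm: imageLD}: set $\tilde u = a/f - N/x = N(f'(0)x - f)/(fx)$, which lies in $\mathbf{k}[[x]]$ since the $1/x$ singularities cancel, and obtain $a/f = LD\!\left(x^N e^{\int \tilde u}\right)$. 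The eigenvector of $ad(f\partial)$ is therefore $f \cdot x^N e^{\int \tilde u}\,\partial$, matching the stated formula.

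The main obstacle I anticipate is nothing deep, just keeping the bookkeeping honest in part (c): verifying that $\tilde u$ genuinely lies in $\mathbf{k}[[x]]$ (so that $\int \tilde u$ and $e^{\int \tilde u}$ are defined), that $x^N e^{\int \tilde u}$ defines an element of $\RR^\sharp$ for every integer $N$ (positive, negative, or zero), and that the residue computation $\res(a/f) = a/f'(0)$ is correct. All the real content has been isolated into Lemma~\ref{lem: eigen1} and Theorem~\ref{thm: imageLD}, so the proof is essentially a tabulation of which case of Theorem~\ref{thm: imageLD} applies to $a/f$ under the three ranges of $W(f)$.
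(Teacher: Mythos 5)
Your proposal is correct and follows essentially the same route as the paper's own proof: reduce via Lemma~\ref{lem: eigen1} to deciding whether $a/f$ lies in the image of $LD$, compute $W(a/f)=-W(f)$ for $a\neq 0$, and then read off each case from Theorem~\ref{thm: imageLD}, with the same residue computation $\res(a/f)=a/f'(0)$ and the same explicit construction $w=N(f'(0)x-f)/(fx)$ in the $W(f)=1$ case. No gaps.
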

\begin{proof}
Let $f\partial \in Witt(\RR)$ be nonzero and let $a \in \mathbf{k}$.
Then by Lemma~\ref{lem: eigen1}, we see that 
$\dime(E_a(f\partial))$ is either zero or one and it is one if and only
if $\frac{a}{f} = LD(u)$ for some $u \in \RR^\sharp$. Furthermore,
in this case, $fu\partial$ is a nonzero element of $E_a(f\partial)$.
Since we know $\dime(E_0(f\partial))=1$ we can assume $a \neq 0$ for the
rest of the proof. It follows that $W(\frac{a}{f})=-W(f)$.

If $W(f)>1$ then $W(\frac{a}{f})<-1$ and so by
Theorem~\ref{thm: imageLD}, $\frac{a}{f}$ is not in the image of the
logarithmic derivative and hence we have proven (a).

If $W(f) \leq 0$ then $W(\frac{a}{f})\geq 0$ and so
$\frac{a}{f} = LD(e^{\int \frac{a}{f}})$ by Theorem~\ref{thm: imageLD}
giving us (b).

If $W(f)=1$ then we can write $f=xf'(0)v$ where $v \in U$ has $v(0)=1$.
Then $W(\frac{a}{f})=-1$ and $\res (\frac{a}{f})=\frac{a}{f'(0)}$.
Again by Theorem~\ref{thm: imageLD}, $\frac{a}{f}$ is in the image of
the logarithmic derivative if and only if this residue is an integer
which happens if and only if $a$ is an integral multiple of $f'(0)$.
If this is the case, then $a=Nf'(0)$ and we can write 
$$
\frac{Nf'(0)}{f} = \frac{N}{x} + w
$$
where $w \in \mathbf{k}[[x]]$. Theorem~\ref{thm: imageLD} then shows
that $\frac{Nf'(0)}{f} = LD(x^Ne^{\int w}).$
Now it remains only to note that
$$
w = \frac{Nf'(0)}{f} - \frac{N}{x} = \frac{N(f'(0)x-f)}{fx}
$$
and we are done.
\end{proof}

\section{Spectra}

We now discuss the concept of a spectrum which we will find to be
very useful in the remainder of this paper.

\begin{defn}
Given a Lie algebra $\mathfrak{L}$, and $\alpha \in \mathfrak{L}$,
we define the $\mathfrak{L}$-spectrum of $\alpha$ to be
$$
\spec_{\mathfrak{L}}(\alpha) = \{ a \in \mathbf{k} | \dime(E_a(\alpha)) \neq 0 \}.
$$
We write $\spec(\alpha)$ for $\spec_{\mathfrak{L}}(\alpha)$ when there
is no danger of confusion. Thus the spectrum of $\alpha$ is the
set of eigenvalues of $ad(\alpha) \in \Endo_{\mathbf{k}}(\mathfrak{L})$.
\end{defn}

Notice that in a nonzero Lie algebra $\mathfrak{L}$, $\spec(0)=\{ 0 \}$ and
$0 \in \spec(\alpha)$ for all $\alpha \in \mathfrak{L}$. In general
the spectrum possesses no significant algebraic structure. However, we
will soon see that if $\mathfrak{L}$ is self-centralizing, $\spec(\alpha)$
possesses the structure of a pseudomonoid (which we will define shortly)
for all $\alpha \in \mathfrak{L}$.

\begin{defn}
A subset $P$ of $\mathbf{k}$ is a pseudomonoid if it satisfies 
the following conditions: \\
\noindent
(a) $0 \in P$. \\
\noindent
(b) If $a, b \in P$ and $a \neq b$ then $a + b \in P$ where $+$ is addition
in $\mathbf{k}$.
\end{defn}

\begin{rem}
Notice that a pseudomonoid differs from a monoid because in a monoid
we may also add an element to itself, i.e., if $a \in P$ and $P$ is a monoid
under $+$ then $a + a \in P$. This need not hold for a pseudomonoid as can be
seen by the following example: \\
Let $A=\{-1,0,1,\dots \}$ be the set of integers greater than 
or equal to negative
one. This set is a pseudomonoid under addition but is not a monoid as
$$
(-1) + (-1) = -2 \notin A.
$$
\end{rem}

The concept of a pseudomonoid turns out to be important for us because
of the following lemma:

\begin{lem}
\label{lem: specscpm}
Let $\mathfrak{L}$ be a Lie algebra, and $\alpha \in \mathfrak{L}$ be
a nonzero element. Then for any $a, b \in \mathbf{k}$, we have 
$[E_a(\alpha), E_b(\alpha)] \subseteq E_{a+b}(\alpha)$. 

Thus if $\mathfrak{L}$ is self-centralizing, then $\spec(\alpha)$ is a
pseudomonoid for all $\alpha \in \mathfrak{L}$.
\end{lem}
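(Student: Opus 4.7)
The plan is to handle the two assertions separately and independently.

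For the containment $[E_a(\alpha), E_b(\alpha)] \subseteq E_{a+b}(\alpha)$, I would just compute. Take $x \in E_a(\alpha)$ and $y \in E_b(\alpha)$, so that $[\alpha,x] = ax$ and $[\alpha,y] = by$. By the Jacobi identity,
\[
[\alpha,[x,y]] = [[\alpha,x],y] + [x,[\alpha,y]] = a[x,y] + b[x,y] = (a+b)[x,y],
\]
so $[x,y] \in E_{a+b}(\alpha)$. No hypothesis beyond being a Lie algebra is needed here.

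For the pseudomonoid conclusion, assume $\mathfrak{L}$ is self-centralizing and verify the two axioms. Axiom (a) is immediate: $\alpha$ is nonzero and $[\alpha,\alpha]=0$, so $\alpha \in E_0(\alpha)$, giving $0 \in \spec(\alpha)$. For axiom (b), take distinct $a,b \in \spec(\alpha)$ and choose nonzero $x \in E_a(\alpha)$ and nonzero $y \in E_b(\alpha)$. Since eigenvectors for distinct eigenvalues of a linear operator are linearly independent, $x$ and $y$ are linearly independent. Now I would invoke characterization (d) of Proposition~\ref{pro: TFAEsc}: in a self-centralizing Lie algebra, the bracket of two linearly independent elements is nonzero, so $[x,y]\neq 0$. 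Combined with the containment already proved, $0 \neq [x,y] \in E_{a+b}(\alpha)$, whence $a+b \in \spec(\alpha)$.

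There is no serious obstacle; the only subtle point is recognizing why the pseudomonoid axiom only requires closure under $a+b$ when $a\neq b$. The reason it cannot be strengthened to a monoid axiom is exactly that when $a=b$, the two chosen eigenvectors need not be linearly independent (indeed, in the self-centralizing setting $E_a(\alpha)$ may be one-dimensional), so characterization (d) no longer forces their bracket to be nonzero.
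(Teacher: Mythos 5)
Your proof is correct and follows essentially the same route as the paper: the Jacobi identity computation for the containment, and then linear independence of eigenvectors for distinct eigenvalues combined with the self-centralizing property (condition (d)) to get a nonzero bracket in $E_{a+b}(\alpha)$. The only cosmetic difference is that the paper also disposes of the $\alpha=0$ case (noting $\spec(0)=\{0\}$ in a nonzero Lie algebra) since the second assertion is stated for all $\alpha$, but this is trivial.
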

\begin{proof}
For a proof of the first statement, take $\alpha \in \mathfrak{L}$ and 
$a, b \in \mathbf{k}$. Then for $e_a \in E_a(\alpha)$ and 
$e_b \in E_b(\alpha)$ we have by the Jacobi identity:
\begin{align*}
\begin{split}
[\alpha, [e_a,e_b]] &= [[\alpha,e_a],e_b] + [e_a, [\alpha,e_b]] \\
&= [ae_a,e_b] + [e_a,be_b] \\ 
&= (a+b)[e_a,e_b].
\end{split}
\end{align*}
Thus we see that $[e_a,e_b] \in E_{a+b}(\alpha)$ which proves the first
statement. 

Now suppose that $\mathfrak{L}$ is self-centralizing. $\spec(0)=\{0\}$ is a
pseudomonoid so assume $\alpha \neq 0$. Let $a, b \in \spec(\alpha)$ 
with $a \neq b$. Then if we take nonzero $e_a \in E_a(\alpha)$ and $e_b \in 
E_b(\alpha)$, since $a \neq b$ it follows that $e_a, e_b$ are linearly
independent. Thus since $\mathfrak{L}$ is self-centralizing, it follows
that $[e_a,e_b]\neq 0$ which shows
that $E_{a+b}(\alpha) \neq 0$. Thus $a+b \in \spec(\alpha)$ and so
$\spec(\alpha)$ is a pseudomonoid.  
\end{proof}

\begin{defn}
Let $\alpha \in \mathfrak{L}$. Then we define 
$$
M_{\mathfrak{L}}(\alpha) = \oplus_{a \in \mathbf{k}} E_a(\alpha).
$$
Thus $M_{\mathfrak{L}}(\alpha)$ is the subspace of $\mathfrak{L}$ spanned
by the eigenspaces of $\alpha$. It is the maximal subspace on which 
$ad(\alpha)$ is diagonal (with respect to some basis). 
\end{defn}

It is easy to argue that we can also write
$$
M_{\mathfrak{L}}(\alpha) = \oplus_{a \in \spec(\alpha)} E_a(\alpha).
$$
It follows from Lemma~\ref{lem: specscpm} that $M_{\mathfrak{L}}(\alpha)$ 
is a Lie subalgebra of $\mathfrak{L}$. 
We will write $M(\alpha)$ for $M_{\mathfrak{L}}(\alpha)$ when there is 
no danger of confusion.

We will now look at a few examples before proceeding any further. To do this,  
it is useful to introduce the concept of a differential spanning set.

\begin{defn}
$S \subseteq \RR$ is called a differential spanning set if it satisfies
the following conditions: \\
\noindent
(a) $1 \in S$. \\
\noindent
(b) If $f,g \in S$, then $fg \in S$. \\
\noindent
(c) If $f \in S$ then $\partial f$ is a linear combination of elements in $S$.

Given a differential spanning set $S$, the vector space $A$ spanned by $S$ 
in $\RR$ is easily seen to be a stable algebra.
\end{defn}

\begin{ex}
\label{ex: one}
Let $S=\{x^n | n \in \mathbb{N} \}$, then it is easy to check that $S$ is
a differential spanning set which spans the polynomial stable algebra
$\mathbf{k}[x]$ and is in fact a basis for this algebra. In 
$Witt(\mathbf{k}[x])$, one calculates:
\begin{align*}
\begin{split} 
[x^n \partial, x^m \partial] &= (x^n(x^m)' - x^m(x^n)')\partial \\
&= (m-n)x^{m+n-1}\partial.
\end{split}
\end{align*}
Thus we see easily that $M(x\partial)=Witt(\mathbf{k}[x])$ and that
$\spec(x\partial)=\{-1,0,1,\dots \}$.
\end{ex}

\begin{ex}
\label{ex: two}
Let $S=\{x^n | n \in \mathbb{Z} \}$, then $S$ is a differential spanning set 
which forms a basis for the Laurent polynomial stable algebra 
$\mathbf{k}[x,x^{-1}]$. Exactly as in Example~\ref{ex: one}, one can show 
that $M(x\partial)=Witt(\mathbf{k}[x,x^{-1}])$ and that 
$\spec(x\partial)=\{ \dots,-2,-1,0,1,2,\dots \} =\mathbb{Z}$.
\end{ex}

Note that the spectrum of $x\partial$ depends on which Lie algebra we are in 
and so we stress that the reader should keep in mind the surpressed subscript
$\mathfrak{L}$ in the notation for $\spec$.

\begin{ex}
\label{ex: three}
Let $G$ be a submonoid of $(\mathbf{k},+)$, then 
$S=\{ e^{ax} | a \in G \}$, is a differential spanning set (since $e^{(a+b)x}=
e^{ax}e^{bx}$ as the reader can verify). Let $A(G)$ be the stable 
algebra that this spanning set spans. In $Witt(A(G))$, we calculate:
\begin{align*}
\begin{split}
[e^{ax}\partial,e^{bx}\partial] &= (e^{ax}be^{bx} - e^{bx}ae^{ax})\partial \\
&=(b-a)e^{(a+b)x}.
\end{split}
\end{align*}
>From this, it follows that $M(1\partial)=Witt(A(G))$ and that 
$\spec(1\partial)=G$. Since $e^{bx}\partial 
\in E_b(1\partial)$ for all $b \in G$, 
it also follows that $S$ is a basis for $A(G)$.
\end{ex}

\begin{rem}
It is a standard fact that every 
torsion-free abelian group embeds into a torsion-free 
divisible group and that a torsion-free divisible group is 
isomorphic to the additive group of a rational vector space (see~\cite{Rob}). 

Any rational vector space is isomorphic to a subgroup of $(\mathbf{k},+)$ 
for suitable choice of $\mathbf{k}$. (Need the dimension of $\mathbf{k}$ over 
its characteristic subfield $\mathbb{Q}$ to be big enough.)

Thus by Example~\ref{ex: three}, we conclude that every torsion-free 
abelian group is the spectrum of some ad-diagonal element in some 
generalized Witt algebra in one 
variable.
\end{rem}

Finally we state a general spectral theorem for generalized Witt algebras.
It is based on Theorem~\ref{thm: specRR}.

\begin{thm}[Spectral theorem]
\label{thm: spectraltheorem} 
Let $Witt(A)$ be a generalized Witt algebra and $f\partial$ be a nonzero
 element in $Witt(A)$. Then for all $a \in \mathbf{k}$, 
$$
\dime(E_a(f\partial)) \leq 1
$$ 
and: \\
\noindent
(a) If $W(f)>1$, then $\spec(f\partial)=\{0\}$. \\
\noindent
(b) If $W(f)\leq 0$ then for all $a \in \mathbf{k}$, 
$$
a \in \spec(f\partial) \iff fe^{\int \frac{a}{f}} \in A.
$$
\\
\noindent
(c) If $W(f)=1$ then $\spec(f\partial) \subseteq \mathbb{Z}f'(0)$, where
$\mathbb{Z}f'(0)$ stands for the set of integral multiples 
of $f'(0) \in \mathbf{k}$.
Furthermore, for all $N \in \mathbb{Z}$,
$$
Nf'(0) \in \spec(f\partial) \iff fx^Ne^{\int \frac{N(f'(0)x-f)}{fx}} \in A.
$$
\end{thm}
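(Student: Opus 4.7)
The plan is to leverage Theorem~\ref{thm: specRR} (the spectral theorem for $\RR$) by observing that $Witt(A)$ is a Lie subalgebra of $Witt(\RR)$, since $A$ is a stable subalgebra of $\RR$. If $\alpha = f\partial \in Witt(A)$ then the bracket $[f\partial,\,g\partial]$ agrees whether computed inside $Witt(A)$ or inside $Witt(\RR)$, so for each $a \in \mathbf{k}$ we have
$$
E_a^{Witt(A)}(f\partial) \;=\; E_a^{Witt(\RR)}(f\partial) \cap Witt(A).
$$
Combining this with Lemma~\ref{lem: eigen1} gives the bound $\dime(E_a(f\partial)) \leq 1$ in $Witt(A)$ at once, and reduces each part (a), (b), (c) to asking whether the unique (up to scalar) eigenvector produced in Theorem~\ref{thm: specRR} actually lies in $Witt(A)$, i.e.\ whether its coefficient in front of $\partial$ belongs to $A$.

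For part (a), when $W(f) > 1$, Theorem~\ref{thm: specRR}(a) says $\spec_{Witt(\RR)}(f\partial) = \{0\}$ and $E_0(f\partial)$ is spanned by $f\partial$ itself, which is already in $Witt(A)$; hence $\spec_{Witt(A)}(f\partial) = \{0\}$. For part (b), when $W(f) \leq 0$, Theorem~\ref{thm: specRR}(b) exhibits $fe^{\int a/f}\partial$ as a generator of the eigenspace in $Witt(\RR)$, so $a$ is an eigenvalue in $Witt(A)$ precisely when $fe^{\int a/f} \in A$. For part (c), when $W(f)=1$, Theorem~\ref{thm: specRR}(c) forces $a = Nf'(0)$ for some integer $N$ in order for the eigenspace in $Witt(\RR)$ to be nonzero, and exhibits $fx^N e^{\int N(f'(0)x - f)/(fx)}\partial$ as a generator; intersecting with $Witt(A)$ yields the stated equivalence.

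There is no serious obstacle: the analytic content (the image of the logarithmic derivative, the role of the residue and Weierstrass degree) was entirely absorbed into Theorems~\ref{thm: imageLD} and~\ref{thm: specRR}. The only points to verify carefully are the trivialities that the eigenvector formulas make sense as elements of $\RR^\sharp$ under the respective hypotheses on $W(f)$, and that "eigenspace in the subalgebra" equals "eigenspace in the ambient algebra, intersected with the subalgebra." Both are routine, so the theorem follows by directly transcribing Theorem~\ref{thm: specRR} through this intersection.
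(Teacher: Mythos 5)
Your proposal is correct and follows essentially the same route as the paper: both reduce to the observation that $Witt(A)$ is a Lie subalgebra of $Witt(\RR)$, so each eigenspace of $ad(f\partial)$ in $Witt(A)$ is the intersection of the corresponding (at most one-dimensional, explicitly generated) eigenspace in $Witt(\RR)$ with $Witt(A)$, and then transcribe Theorem~\ref{thm: specRR}. Nothing essential is missing.
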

\begin{proof}
First note that $A \subseteq \RR$ so $Witt(A)$ is a Lie subalgebra of 
$Witt(\RR)$. Then if $f\partial \in Witt(A)$, and $a \in \mathbf{k}$, 
the $a$-eigenspace of $ad(f\partial)$ for $Witt(A)$ lies inside the one 
for $Witt(\RR)$. Thus $\spec_{Witt(A)}(f\partial) \subseteq 
\spec_{Witt(\RR)}(f\partial)$ and $a \in \spec_{Witt(\RR)}(f\partial)$ lies
in $\spec_{Witt(A)}(f\partial)$ if and only if one of the eigenvectors
in $\RR$ corresponding to $a$ actually lies in $A$. 
With these comments, the rest now follows from Theorem~\ref{thm: specRR}.
\end{proof}

Theorem~\ref{thm: spectraltheorem}, will show that our definition 
of generalized Witt algebras is related to the definition 
in papers such as~\cite{Dok}.
We do this in the next proposition.

\begin{pro}
\label{pro: otherdefn}
Let $Witt(A)$ be a generalized Witt algebra and let $f\partial$ be a 
nonzero element of $Witt(A)$. Then there exists a basis 
$\{e_a\}_{a \in \spec(f\partial)}$ of $M(f\partial)$ such that 
$$
[e_a, e_b]=(b-a)e_{a+b}
$$
for all $a,b \in \spec(f\partial)$. (Here $(b-a)e_{a+b}$ is 
considered to be zero for $a=b$ even though $a+b$ might not 
be in $\spec(f\partial)$.)
Furthermore, we can take $e_a \in E_a(f\partial)$ for all 
$a \in \spec(f\partial)$ and $e_0=f\partial$.
\end{pro}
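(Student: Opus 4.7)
The plan is to exploit the explicit eigenvector formulas already obtained in Theorem~\ref{thm: specRR}: I will choose one canonical representative from each eigenspace and then verify the bracket identity by direct computation. By Theorem~\ref{thm: spectraltheorem}, every eigenspace $E_a(f\partial)$ with $a\in\spec(f\partial)$ is exactly one-dimensional, so picking any nonzero $e_a$ in each yields a basis of $M(f\partial)$; the real content of the proposition lies in making these choices \emph{compatibly}, i.e.\ so that the structure constants come out to $b-a$.

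I would write each eigenvector in the form $e_a = fu_a\partial$ with $u_a\in\mathbf{R}^\sharp$ satisfying $fu_a' = au_a$, the eigenvalue equation from Lemma~\ref{lem: eigen1}. A direct calculation using only this relation and the Leibniz rule yields
$$
[fu_a\partial,\,fu_b\partial] \;=\; f^2(u_au_b' - u_a'u_b)\partial \;=\; (b-a)\,fu_au_b\partial,
$$
and applying the same relation to $u_au_b$ shows that $fu_au_b\partial$ lies in $E_{a+b}(f\partial)$. So as long as the chosen $u_a$'s satisfy the multiplicative law $u_au_b=u_{a+b}$ whenever $a\neq b$ are in $\spec(f\partial)$ (with $a+b\in\spec(f\partial)$ being automatic by Lemma~\ref{lem: specscpm}), the desired identity $[e_a,e_b]=(b-a)e_{a+b}$ follows; normalizing so that $u_0=1$ then forces $e_0=f\partial$.

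To pin down the explicit choices I would split along the three cases of Theorem~\ref{thm: specRR}. The case $W(f)>1$ is trivial, since $\spec(f\partial)=\{0\}$ and the only bracket to check is $[e_0,e_0]=0$. For $W(f)\leq 0$, take $u_a=e^{\int a/f}$; the multiplicative law reduces to the linearity of $\int$ together with the standard power-series identity $e^{p+q}=e^p e^q$ for $p,q\in(x)$. For $W(f)=1$, take $u_{Nf'(0)} = x^N e^{\int w_N}$ with $w_N = N(f'(0)x-f)/(fx)$; here $w_M+w_N=w_{M+N}$, and the same exponential identity closes the argument.

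I do not foresee a significant obstacle: the heavy lifting has already been done by Theorem~\ref{thm: specRR} in producing closed-form eigenvectors, and the whole proposition then collapses to the observation that those formulas are multiplicative under the pseudomonoid law on $\spec(f\partial)$. The one mild bookkeeping point is confirming that $e_{a+b}$ is actually defined whenever the bracket formula invokes it, which is exactly what the pseudomonoid property guarantees.
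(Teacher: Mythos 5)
Your proposal is correct and follows essentially the same route as the paper: the paper also splits on $W(f)$, takes the canonical eigenvectors $e_a=fe^{\int a/f}\partial$ (resp.\ $e_{Nf'(0)}=fx^Ne^{\int N(f'(0)x-f)/(fx)}\partial$) supplied by Theorem~\ref{thm: spectraltheorem}, and verifies $[e_a,e_b]=(b-a)e_{a+b}$ by direct computation with $[g\partial,h\partial]=(gh'-hg')\partial$. Your reduction to the multiplicativity $u_au_b=u_{a+b}$ simply spells out the ``one computes'' step in the paper and is a valid way to organize it.
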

\begin{proof}
By Theorem~\ref{thm: spectraltheorem}, if $W(f) > 1$, then 
$\spec(f\partial)=\{0\}$ and the result is obvious.

If $W(f) \leq 0$, then set
$$
e_a = fe^{\int \frac{a}{f}} \partial
$$
for all $a \in \spec(f\partial)$.

Then by Theorem~\ref{thm: spectraltheorem}, $\{e_a\}_{a \in \spec{f\partial}}$
is a basis for $M(f\partial)$.

One computes using $[g\partial, h\partial]=(gh'-hg')\partial$, 
that indeed 
$$
[e_a,e_b]=(b-a)e_{a+b}.
$$

Similarly, in the remaining case where $W(f)=1$, we set 
$$
e_{Nf'(0)}=fx^Ne^{\int \frac{N(f'(0)x-f)}{fx}} \partial
$$
for all $Nf'(0) \in \spec(f\partial)$, and again compute that 
$$
[e_{Nf'(0)}, e_{Mf'(0)}]=(M-N)f'(0)e_{(N+M)f'(0)}
$$
for all $Mf'(0),Nf'(0) \in \spec(f\partial)$.
\end{proof}

We are now ready to study the issue of simplicity of a generalized Witt 
algebra. We will do this in the next section.

\section{Simplicity}

\begin{defn}
A Lie algebra $\mathfrak{L}$ is said to be strongly graded if there exists a
pseudomonoid $G$ and a vector space decomposition:
$$
\mathfrak{L} = \oplus_{a \in G}E_a
$$
with the following properties: \\
\noindent
(a) $\dime(E_a)=1$ for all $a \in G$. \\
\noindent
(b) There is a basis $\{e_a\}_{a \in G}$ of $\mathfrak{L}$ such 
that $e_a \in E_a$ for all $a \in G$ and 
$$
[e_a,e_b]=(b-a)e_{a+b}.
$$
Note that this means that $\spec(e_0)=G$. 
\end{defn}

\begin{rem}
Of course, by Theorem~\ref{thm: spectraltheorem}  
and Proposition~\ref{pro: otherdefn}, if $Witt(A)$ is a generalized 
Witt algebra, and $\alpha \in Witt(A)$ is nonzero, then 
$M(\alpha)$ is a strongly graded Lie algebra, graded by the pseudomonoid 
$\spec(\alpha)$ where $\alpha$ plays the role of $e_0$.
\end{rem}

\begin{rem}
It is obvious that two strongly graded Lie algebras, graded by the 
same pseudomonoid $G \subseteq \mathbf{k}$, are isomorphic as Lie 
algebras.
\end{rem}

Now we set out to get a complete correspondence between the ideals of a 
strongly graded Lie algebra and the ideals of the pseudomonoid which grades it.

\begin{defn}
Let $G$ be a pseudomonoid. 

Then $S \subseteq G$ is called a closed subset if
for all distinct $a,b \in S$, we have $a+b \in S$. Note that a closed subset 
$S$ need not be a subpseudomonoid of $G$ since we do not require that 
$0 \in S$. In fact the empty set $\emptyset$ is always a closed subset.

$I \subseteq G$ is called an ideal subset if for all $a \in I$ and all 
$b \in G$ such that $b \neq a$, we have $a+b \in I$. Again the empty set 
is always an ideal subset and $G$ is always an ideal subset of $G$.
These are called the trivial ideal subsets. 

A pseudomonoid $G$ which has no nontrivial ideal subsets is called a simple 
psuedomonoid. 

A nonzero element $x \in G$ is called invertible if $-x \in G$. (Recall, all
 pseudomonoids are by definition in $\mathbf{k}$ and hence $-x$ exists in 
$\mathbf{k}$ and is distinct from $x$.)
\end{defn}

Fix a strongly graded Lie algebra $\mathfrak{L}$, 
graded by a pseudomonoid $G$, then $\mathfrak{L}=\oplus_{g \in G} E_g$ 
such that there is $e_0 \in E_0$, with $\spec(e_0)=G$ and $E_g$ equal to 
the eigenspace of 
$ad(e_0)$ corresponding to $g$.
 
Then for any $S \subset G$, we define:
$$
\Theta(S) = \oplus_{a \in S}E_a.
$$
(We use the convention that $\Theta(\emptyset)=0$.)

Thus $\Theta$ is a map from the subsets of $G$ to the subspaces of 
$\mathfrak{L}$ which is obviously injective.

Notice that if $S$ is a closed subset of $G$, then $\Theta(S)$ is a Lie 
subalgebra of $\mathfrak{L}$. (Because $[E_a,E_a]=0$ for all $a \in G$.)
Furthermore, if $0 \in S$, then $e_0 \in \Theta(S)$.

Similarly, if $I$ is an ideal subset of $G$, then $\Theta(I)$ is an ideal 
of $\mathfrak{L}$.  

\begin{pro}
\label{pro: idealcor}
Let $\mathfrak{L}$ be a strongly graded Lie algebra, graded by the 
pseudomonoid $G$. 

The map $\Theta$ defined above takes closed subsets of $G$ to Lie 
subalgebras of $\mathfrak{L}$ and this correspondence is injective.

The map $\Theta$ takes closed subsets of $G$ containing $0$, to Lie 
subalgebras of $\mathfrak{L}$ containing $e_0$ and this correspondence is 
bijective.

The map $\Theta$ takes ideal subsets of $G$ to ideals of $\mathfrak{L}$
and this correspondence is bijective.
\end{pro}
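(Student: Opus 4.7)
The plan is to reduce everything to one technical claim: any subspace $V \subseteq \mathfrak{L}$ that is invariant under $ad(e_0)$ already decomposes as $V = \oplus_{a \in T}E_a$ where $T = \{a \in G : E_a \subseteq V\}$. Once this is in hand, the three correspondences follow from purely combinatorial manipulations with the bracket relation $[e_a,e_b]=(b-a)e_{a+b}$.

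To prove the key claim, I would take $v \in V$ and write $v = \sum_{a \in F}c_a e_a$ for some finite $F \subseteq G$ with all $c_a \neq 0$. Applying $ad(e_0)^k$ for $k = 0, 1, \ldots, |F|-1$ produces vectors $\sum_{a \in F}c_a a^k e_a$ in $V$, and inverting the resulting Vandermonde system (whose determinant is nonzero since the $a \in F$ are distinct elements of $\mathbf{k}$) shows that each $c_a e_a$, and hence each $e_a$ with $a \in F$, lies in $V$. This is the only nontrivial step in the whole proposition and I expect it to be the main obstacle; though in itself standard, it is what lets us pass from a priori arbitrary subspaces to sums of eigenspaces.

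Injectivity of $\Theta$ on all subsets is automatic: since the $E_a$ with $a \in G$ are independent one-dimensional subspaces, $\Theta(S)$ determines $S$ as $\{a \in G : E_a \subseteq \Theta(S)\}$. For the bijection onto Lie subalgebras containing $e_0$: if $\mathfrak{L}' \ni e_0$ is such a subalgebra then $[e_0,\mathfrak{L}'] \subseteq \mathfrak{L}'$, so by the key claim $\mathfrak{L}' = \Theta(S)$ with $S = \{a : E_a \subseteq \mathfrak{L}'\}$. The relation $[e_a,e_b]=(b-a)e_{a+b}$ with $b-a \neq 0$ forces $a+b \in S$ whenever $a \neq b$ are in $S$, so $S$ is closed, and $0 \in S$ because $e_0 \in \mathfrak{L}'$.

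For the bijection onto ideals the argument is the same except that invariance under $ad(e_0)$ is automatic, since every ideal is $ad$-invariant. Writing an ideal $J$ as $\Theta(S)$ via the key claim, the condition that $J$ be stable under $ad(e_b)$ for every $b \in G$, combined with $[e_b,e_a]=(a-b)e_{a+b}$ being nonzero whenever $a \neq b$, forces $a+b \in S$ for every $a \in S$ and every $b \in G$ distinct from $a$. This is precisely the ideal subset condition, and it completes the proof.
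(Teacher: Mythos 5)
Your proposal is correct and follows essentially the same route as the paper: both arguments reduce the two surjectivity statements to the fact that an $ad(e_0)$-invariant subspace (an ideal, or a subalgebra containing $e_0$) contains all homogeneous components of each of its elements, and then read off the closed-subset and ideal-subset conditions from $[e_a,e_b]=(b-a)e_{a+b}$. The only difference is cosmetic: you extract the components in one shot by applying powers of $ad(e_0)$ and inverting a Vandermonde matrix, whereas the paper does the same elimination by induction on the number of nonzero components, subtracting a suitable multiple of $[e_0,y]$.
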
 
\begin{proof}
All but the surjectivity of the last two correspondences has been proven.

So assume $J$ is an ideal of $\mathfrak{L}$ (or a Lie subalgebra containing 
$e_0$). First, let us show that there is a
subset $I$ of $G$ such that $\Theta(I)=J$.
We can of course 
assume $J \neq 0$ as $\Theta(\emptyset)=0$.

Define $I \subset G$ as follows. Recall that by the grading, for any
 $x \in \mathfrak{L}$, we can write $x$ uniquely as
$$
x= \sum_{a \in G}x_a
$$
with $x_a \in E_a$ and only finitely many $x_a$ nonzero. We call
 $x_a$ the $a$-th component of $x$.
Then set:
$$
I = \{ a \in G \text{ such that there exists } y \in J \text{ whose } 
 a\text{-th component is nonzero} \}.
$$

It is clear that $J \subseteq \Theta(I)$. 
So it remains only to show $\Theta(I) \subseteq J$. We do this by showing that
$E_a \subseteq J$ for any $a \in I$. This follows immediately from the 
following fact:

{\it Fact: If $y \in J$, then all of the components of $y$ are also in $J$.}

We will prove this fact by induction on $n$, the number of nonzero 
components of $y$.
If $n=0,1$, it follows trivially. So assume $n>1$ and we have proven the fact 
for all smaller $n$. So let $y \in J$ and assume 
we can write
$$
y = \sum_{i=1}^n y_{a_i}
$$
with $y_{a_i} \in E_{a_i}$ nonzero 
and $\{a_i\}_{i=1}^n$ a set of distinct elements 
in $I$. Also without loss of generality, $a_1 \neq 0$. 
Then
$$
[e_0,y] = \sum_{i=1}^na_iy_{a_i}
$$
is in $J$ and so
$$
y - \frac{1}{a_1}[e_0,y] = \sum_{i=2}^n (1-\frac{a_i}{a_1})y_{a_i}
$$
is in $J$. However by induction, it follows that the components of
$y - \frac{1}{a_1}[e_0,y]$ lie in $J$ and hence that $y_{a_i}$ lie in $J$
for all $2 \leq i \leq n$. However $y=y_{a_1} + \sum_{i=2}^n y_{a_i}$, so it
also follows that $y_{a_1}$ is in $J$. Thus by induction, 
we have proven the fact
 and hence that $J = \Theta(I)$.

All that remains, is to show that $I$ is an ideal subset if $J$ is an ideal or
 that $I$ is a closed subset containing zero if $J$ is a Lie subalgebra 
containing $e_0$. We prove only the former, the proof of the latter being 
similar.

If $a \in I$ then by definition, there is $y \in J$ such that 
$y = \sum_{g \in G}y_g$ with $y_g \in E_g$ and $y_a \neq 0$.
If $b \in G$ and $b \neq a$, take nonzero $z_b \in E_b$. Then
$[z_b,y] = \sum_{g \in G}[z_b,y_g] \in J$ as $J$ is an ideal. 
Notice that since our pseudomonoids are
defined to be subpseudomonoids of $(\mathbf{k},+)$, the only term in the 
sum that can lie in $E_{a+b}$ is $[z_b,y_a]$ which is nonzero as $z_b, y_a$
are nonzero and since we are in a strongly graded Lie algebra. 
All the other terms, 
live in other eigenspaces and so we conclude $[z_b,y_a]$ has 
nonzero $(a+b)$-component and hence $a+b \in I$ showing that $I$ is an ideal 
subset of $G$.

\end{proof}

\begin{cor}
\label{cor: simplecor}
If $\mathfrak{L}$ is a strongly graded Lie algebra, graded by a pseudomonoid 
$G$. Then $\mathfrak{L}$ is simple if and only if $G$ is simple.

Let $Witt(A)$ be a generalized Witt algebra and 
$\alpha \in Witt(A)$ be nonzero, then $M(\alpha)$ is a simple Lie 
algebra if and only if $\spec(\alpha)$ is a simple pseudomonoid.
(Note it is 
easy to see that $\spec_{M(\alpha)}(\alpha)=\spec_{Witt(A)}(\alpha)$.)
\end{cor}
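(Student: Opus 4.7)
The plan is to derive this corollary as an essentially immediate consequence of Proposition~\ref{pro: idealcor}, which has already supplied the bijective correspondence between ideal subsets of $G$ and ideals of $\mathfrak{L}$ in the strongly graded case.

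For the first assertion, I would first observe that the map $\Theta$ respects the notion of triviality: by its definition $\Theta(\emptyset) = 0$, and $\Theta(G) = \bigoplus_{a \in G} E_a = \mathfrak{L}$. Since Proposition~\ref{pro: idealcor} asserts that $\Theta$ is a bijection between ideal subsets of $G$ and ideals of $\mathfrak{L}$, it restricts to a bijection between \emph{nontrivial} ideal subsets of $G$ (those distinct from $\emptyset$ and $G$) and \emph{nontrivial} ideals of $\mathfrak{L}$ (those distinct from $0$ and $\mathfrak{L}$). Therefore $\mathfrak{L}$ possesses no nontrivial ideals if and only if $G$ possesses no nontrivial ideal subsets, which by the definition of a simple pseudomonoid is the desired equivalence.

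For the second assertion, I would invoke the remark preceding Proposition~\ref{pro: idealcor}, which, together with Theorem~\ref{thm: spectraltheorem} and Proposition~\ref{pro: otherdefn}, establishes that $M(\alpha)$ is itself a strongly graded Lie algebra, graded by the pseudomonoid $\spec(\alpha)$ with $\alpha$ in the role of $e_0$. Applying the first part of the corollary to this strongly graded Lie algebra immediately yields the simplicity equivalence. For the parenthetical claim that $\spec_{M(\alpha)}(\alpha) = \spec_{Witt(A)}(\alpha)$, I would note that by the very definition $M(\alpha) = \bigoplus_{a \in \mathbf{k}} E_a(\alpha)$ already contains every eigenvector of $ad(\alpha)$ in $Witt(A)$, so the sets of eigenvalues coincide.

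There is no real obstacle here: the substantive work was carried out in Proposition~\ref{pro: idealcor}, and the corollary is just a repackaging. The only subtlety worth verifying explicitly is that $\Theta$ matches trivial with trivial on both sides, which is what makes the bijection of ideal/ideal subsets descend to a bijection of \emph{nontrivial} ideal/ideal subsets. Once that bookkeeping is made explicit, both parts of the statement drop out.
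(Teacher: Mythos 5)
Your proposal is correct and follows exactly the route the paper intends: the paper's own proof is just the one-line remark that the corollary ``follows immediately from previous remarks and Proposition~\ref{pro: idealcor},'' and you have simply made explicit the bookkeeping (that $\Theta$ matches trivial with trivial, and that $M(\alpha)$ is strongly graded by $\spec(\alpha)$) that the authors leave implicit. No gaps.
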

\begin{proof}
Follows immediately from previous remarks and Proposition~\ref{pro: idealcor}.
\end{proof}

So we see that it would be useful to have some conditions that ensure 
the simplicity of a pseudomonoid. This is the purpose of the next lemma.

\begin{lem}
\label{lem: simplePM}
Let $G$ be a pseudomonoid. Then: \\
\noindent
(a) If $I$ is an ideal subset, and $0 \in I$ then $I=G$. \\
\noindent
(b) If $I$ is an ideal subset, and there is an invertible element 
$x \in I$ then $I=G$. \\
\noindent
(c) A pseudomonoid which is a group is a simple pseudomonoid.
\end{lem}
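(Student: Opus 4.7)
The plan is to prove the three parts in sequence, each reducing to the previous. All three are direct consequences of the definition of an ideal subset and should require no machinery beyond what has already been established.

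For part (a), I would observe that the hypothesis $0 \in I$ combined with the ideal subset property (applied with $a = 0$) immediately yields $b = 0 + b \in I$ for every $b \in G$ with $b \neq 0$. Together with $0 \in I$ itself, this gives $I = G$.

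For part (b), given an invertible element $x \in I$, the definition of invertibility guarantees $x \neq 0$ and $-x \in G$. Since we work over a field of characteristic zero, $x \neq 0$ forces $-x \neq x$, so the ideal subset property applied to $a = x \in I$ and $b = -x \in G$ produces $0 = x + (-x) \in I$. Part (a) then completes the argument.

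For part (c), I would take a nonempty ideal subset $I$ of $G$ and pick any $x \in I$. If $x = 0$, apply (a); otherwise, the fact that $G$ is a group (under addition, as a subgroup of $(\mathbf{k},+)$) ensures $-x \in G$, making $x$ invertible, so (b) applies. In either case $I = G$, and since the nonempty ideal subsets are all equal to $G$, the only ideal subsets are $\emptyset$ and $G$, i.e., $G$ is simple. There is no substantive obstacle here; the only point requiring a brief mention is the use of characteristic zero to guarantee $-x \neq x$ when $x \neq 0$, so that the pseudomonoid axiom may be invoked.
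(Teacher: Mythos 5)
Your proof is correct and follows essentially the same route as the paper: part (a) by applying the ideal property with $a=0$, part (b) by producing $0 = x + (-x) \in I$ and invoking (a), and part (c) by reducing to (a) or (b) according to whether the chosen element is zero. The remark about characteristic zero ensuring $-x \neq x$ is exactly the point the paper also flags in its definition of an invertible element.
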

\begin{proof}
Let $I$ be an ideal subset with $0 \in I$. Then for any nonzero $a \in G$,
we have $0+a=a \in I$ since $I$ is an ideal subset. Thus $I=G$. This proves 
(a). 

Suppose $I$ contained an invertible element $x$. Then as $x \neq -x$, and 
$-x \in G$, we have 
$x + (-x) = 0 \in I$ as $I$ is an ideal subset.  Thus $I=G$ by (a). So this
proves (b).

If $G$ is an (abelian) group and $I$ a nonempty ideal subset.
Then take $a \in I$. If $a=0$ then $I=G$ by (a) and if $a$ is nonzero then 
$a$ is invertible as $G$ is a group, and so $I=G$ by (b). Thus we conclude 
$G$ is a simple pseudomonoid.
\end{proof}

\begin{cor}
If $\mathfrak{L}$ is a strongly graded Lie algebra, graded by an abelian group 
$A \subseteq \mathbf{k}$, then $\mathfrak{L}$ is simple.
\end{cor}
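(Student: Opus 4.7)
The plan is to reduce the statement immediately to results that have already been established in the excerpt, so that essentially no new work is needed.

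First I would invoke Corollary~\ref{cor: simplecor}: a strongly graded Lie algebra $\mathfrak{L}$ graded by a pseudomonoid $G$ is simple if and only if $G$ is simple as a pseudomonoid. Since we are given $\mathfrak{L}$ is strongly graded by the abelian group $A \subseteq \mathbf{k}$, and $A$ is in particular a pseudomonoid (it contains $0$, and whenever $a,b \in A$ are distinct, $a+b \in A$ since $A$ is closed under addition), this reduction is immediate.

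Next I would appeal to Lemma~\ref{lem: simplePM}(c), which states that any pseudomonoid which happens to be a group is a simple pseudomonoid. Applying this to $A$, we conclude that $A$ is a simple pseudomonoid. Combined with the previous step, this yields the simplicity of $\mathfrak{L}$.

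There is essentially no obstacle here: the corollary is a direct composition of Corollary~\ref{cor: simplecor} and Lemma~\ref{lem: simplePM}(c), with the only small check being that an abelian subgroup of $(\mathbf{k},+)$ satisfies the definition of a pseudomonoid, which is trivial. The proof is thus a one-line chain of citations, and I would write it explicitly as: ``By Corollary~\ref{cor: simplecor}, $\mathfrak{L}$ is simple iff $A$ is a simple pseudomonoid; by Lemma~\ref{lem: simplePM}(c), $A$ is a simple pseudomonoid since it is a group; hence $\mathfrak{L}$ is simple.''
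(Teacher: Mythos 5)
Your proposal is correct and matches the paper's intended argument exactly: the corollary is stated immediately after Lemma~\ref{lem: simplePM} precisely so that it follows by combining part (c) of that lemma with Corollary~\ref{cor: simplecor}. The only check you add, that an abelian subgroup of $(\mathbf{k},+)$ is a pseudomonoid, is indeed trivial and correctly handled.
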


In Example~\ref{ex: one} we saw that the classical Witt algebra, 
$Witt(\mathbf{k}[x])$ is strongly graded, graded by the pseudomonoid 
$G=\{-1,0,1,\dots\}$. If $I$ is a nonempty ideal subset of this pseudomonoid, 
by adding $-1$ repeatedly to an element in $I$ if necessary, we see $-1 \in I$.
Since $-1$ is invertible in $G$, we conclude by Lemma~\ref{lem: simplePM}, 
that $I=G$. So $G$ is a simple pseudomonoid and so the classical Witt algebra 
is a simple Lie algebra.

In Example~\ref{ex: two} we saw that the centerless Virasoro algebra,
$Witt(\mathbf{k}[x,x^{-1}])$ is strongly graded, graded by the pseudomonoid 
$\mathbb{Z}$. Since this is a group, it is simple as a pseudomonoid and we
have proven:

\begin{cor}
The classical Witt algebra and the centerless Virasoro algebra are simple.
\end{cor}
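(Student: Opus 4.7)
The plan is to invoke Corollary~\ref{cor: simplecor}, which reduces simplicity of a strongly graded Lie algebra to simplicity of the pseudomonoid that grades it. From Example~\ref{ex: one} and the subsequent discussion, $Witt(\mathbf{k}[x])$ equals $M(x\partial)$ and is strongly graded by the pseudomonoid $G_1 = \{-1, 0, 1, 2, \dots\}$; from Example~\ref{ex: two}, $Witt(\mathbf{k}[x,x^{-1}])$ equals $M(x\partial)$ and is strongly graded by $G_2 = \mathbb{Z}$. So the task reduces to verifying that $G_1$ and $G_2$ are simple pseudomonoids.

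The centerless Virasoro case is immediate: $G_2 = \mathbb{Z}$ is an abelian group, so Lemma~\ref{lem: simplePM}(c) applies directly to give simplicity of the pseudomonoid, and hence of the Lie algebra.

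For the classical Witt case, I would take a nonempty ideal subset $I \subseteq G_1$ and show $I = G_1$. Pick any $a \in I$. Since $-1 \in G_1$ and $-1 \neq a$ whenever $a \geq 0$, the ideal-subset condition gives $a - 1 = a + (-1) \in I$. Iterating this downward descent from $a$ yields $-1 \in I$ after finitely many steps. Finally, $-1$ is invertible in $G_1$ because $-(-1) = 1 \in G_1$, so Lemma~\ref{lem: simplePM}(b) forces $I = G_1$. Thus $G_1$ is simple, and Corollary~\ref{cor: simplecor} completes the proof.

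The main obstacle, such as it is, is purely bookkeeping in the Witt case: one must confirm that the descent $a \mapsto a-1$ never violates the $a \neq -1$ hypothesis before arriving at $-1$. This is trivial since we can halt the iteration the instant the current element equals $-1$, at which point invertibility takes over via Lemma~\ref{lem: simplePM}(b). There is no deeper technical difficulty here; once the strong grading, the pseudomonoid framework, and Lemma~\ref{lem: simplePM} are in place, the simplicity of both classical algebras is essentially a formal consequence.
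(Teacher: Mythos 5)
Your proof is correct and follows essentially the same route as the paper: both reduce via Corollary~\ref{cor: simplecor} to simplicity of the grading pseudomonoids, handle $\mathbb{Z}$ by Lemma~\ref{lem: simplePM}(c) since it is a group, and handle $\{-1,0,1,\dots\}$ by repeatedly adding $-1$ to reach the invertible element $-1$ and then applying Lemma~\ref{lem: simplePM}(b). Your extra remark about halting the descent once the current element equals $-1$ is a small point the paper leaves implicit, but the argument is the same.
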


\begin{ex}
\label{ex: notsimple}
The natural numbers $\mathbb{N}=\{0,1,2,\dots\}$ is a monoid which is not 
simple as a pseudomonoid. In fact if we define $I_k = \{k,k+1,\dots \}$ for
 all $k \in \mathbb{N}$, then the reader can easily verify that $I_k$ is an 
ideal subset of $\mathbb{N}$. (There is exactly one more nonempty ideal 
subset not covered by these which we leave the reader to find if they wish.) 
So from Example~\ref{ex: three}, 
$Witt(A(\mathbb{N}))$ gives us an example of a generalized Witt algebra which
is not simple.
\end{ex}
 
\begin{defn}
Two subsets $S_1, S_2$ of $\mathbf{k}$ are said to be equivalent if 
there exists nonzero $k \in \mathbf{k}$ such that 
$$
S_1 = kS_2 \equiv \{ kx | x \in S_2 \}.
$$ 
It is easy to see that this defines an equivalence relation on the subsets of
$\mathbf{k}$. We write $[[S]]$ for the equivalence class of the set $S$ under 
this equivalence relation.
\end{defn}

For any Lie algebra $\mathfrak{L}$, nonzero $\alpha \in \mathfrak{L}$, and 
nonzero $k \in \mathbf{k}$, 
it is easy to see that $M(\alpha)=M(k\alpha)$ and 
$\spec(k\alpha)=k\spec(\alpha)$. Thus we have 
$$
[[\spec(k\alpha)]]=[[\spec(\alpha)]].
$$

It is also easy to see that 
two strongly graded Lie algebras, graded by equivalent pseudomonoids, 
are isomorphic as Lie algebras.

Given a strongly graded Lie algebra $\mathfrak{L}$, graded by the 
pseudomonoid $G$, we have 
$\mathfrak{L}=M(e_0)$ with $\spec(e_0)=G \subseteq \mathbf{k}$ 
where $e_0$ is obtained from the 
definition of a strongly graded Lie algebra. 

We would like to define $[[\spec(e_0)]]$ as an invariant of $\mathfrak{L}$. 
However, it turns out that this is not apriori, intrinsic enough to 
be useful, i.e., it is not obvious that we might not find another 
nonzero element $f$
such that $\mathfrak{L}=M(f)$ and $[[\spec(f)]] \neq [[\spec(e_0)]]$.

In the next section, we show that this in fact cannot occur, and hence define 
an invariant which helps us find infinite families of nonisomorphic 
generalized Witt algebras!

\section{Invariance of the spectrum}

Before we proceed any further, we need to develop a somewhat technical tool.
We need to weakly order any field (of characteristic zero). 
We define this notion now.

\begin{defn}
A weak order on $\mathbf{k}$ is a linear order $\preceq$ on $\mathbf{k}$ 
such that if $x \preceq y$ then $x+z \preceq y+z$ for all $z \in \mathbf{k}$.
(Recall a linear order is a partial order with the property that for any two 
elements $e,f$ either $e \preceq f$ or $f \preceq e$ (or both).)
\end{defn}

Note the field of real numbers $\mathbb{R}$ has a weak order (the usual one) 
and so any subfield of $\mathbb{R}$ has a weak order.

A weak order on an abelian group is defined in exactly the same way.

As is common, we will write $x \prec y$ if $x \preceq y$ and $x \neq y$.

There is also a stronger notion of ordered field in the literature 
(see~\cite{Lang}). 
However for example $\mathbb{C}$, the
 field of complex numbers, cannot be made into an ordered field. 
However, we show in the next 
proposition, that any field (of characteristic zero) has a weak order. 

\begin{pro}
\label{pro: weakorder}
Any field $\mathbf{k}$ (of characteristic zero) possesses a weak order.
\end{pro}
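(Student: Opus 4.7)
The plan is to use the fact that a field of characteristic zero is, as an additive group, a $\mathbb{Q}$-vector space, and then to read off a weak order from a lexicographic construction on a chosen basis. The multiplicative structure of $\mathbf{k}$ will play no role; the argument really produces a weak order on any $\mathbb{Q}$-vector space, which is all the proposition asks for.

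First I would invoke the axiom of choice twice: once to pick a $\mathbb{Q}$-basis $B$ of $\mathbf{k}$ regarded as an additive group, and once to equip $B$ with a well-ordering $\leq_B$. Every nonzero $x \in \mathbf{k}$ then has a unique finite expansion $x = \sum_{b} q_b(x)\,b$ with $q_b(x) \in \mathbb{Q}$, and its support $\{b \in B : q_b(x) \neq 0\}$ has a $\leq_B$-maximum which I denote $\mu(x)$. I would then declare $x$ to be \emph{positive} when $x \neq 0$ and the leading coefficient $q_{\mu(x)}(x)$ is positive in $\mathbb{Q}$, and define $x \prec y$ iff $y - x$ is positive, with $x \preceq y$ meaning $x \prec y$ or $x = y$.

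With these definitions, totality and antisymmetry of $\preceq$ follow immediately from trichotomy applied to the rational number $q_{\mu(y-x)}(y-x)$, and translation invariance is automatic since $(y+z)-(x+z) = y-x$. The main obstacle is transitivity, which reduces to showing that the set of positive elements is closed under addition. Here I would split into cases on how $\mu(a)$ and $\mu(b)$ compare for positive $a, b$: if $\mu(a) \neq \mu(b)$, the $\leq_B$-larger of the two is still $\mu(a+b)$ and retains its positive coefficient; if $\mu(a) = \mu(b)$, the coefficient at this common basis element in $a+b$ is the sum of two positive rationals, which is again positive since $\mathbb{Q}$ is an ordered field. In either case $a+b$ is positive, completing the sketch.
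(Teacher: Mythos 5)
Your proof is correct, and it takes a genuinely different route from the paper's. The paper does not build the order in one shot: it applies Zorn's lemma to the set of pairs $(A,\preceq)$ where $A$ is a $\mathbb{Q}$-subspace of $\mathbf{k}$ carrying a weak order, ordered by inclusion-plus-restriction, takes a maximal element $(M,\preceq)$, and shows that if $M\neq\mathbf{k}$ one can extend the order to $M\oplus\mathbb{Q}a$ lexicographically (compare the $M$-components first, then the rational coefficients of $a$), contradicting maximality. Your argument instead fixes a Hamel basis $B$ of $\mathbf{k}$ over $\mathbb{Q}$, linearly orders $B$, and declares an element positive when its leading coefficient (at the $\leq_B$-largest basis element of its support) is a positive rational; translation invariance is then free and transitivity reduces to closure of positives under addition, which your two-case analysis on $\mu(a)$ versus $\mu(b)$ handles correctly (in the equal case the common basis element survives because the sum of two positive rationals is nonzero, so it really is the leading term of $a+b$). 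Both arguments use choice — yours through the existence of a basis and a linear order on it, the paper's through Zorn — and both in the end exploit the same one-dimensional lexicographic idea; the difference is that you verify the global construction directly, which is more explicit and makes the positive cone visible, while the paper only has to check the easier one-step extension and the chain-union property, at the cost of a less concrete final order. A minor remark: you do not need a well-ordering of $B$, only a linear order, since the supports involved are finite; this does not affect correctness.
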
 
\begin{proof}
We identify the characteristic subfield of $\mathbf{k}$ with the rational 
numbers $\mathbb{Q}$ as is usual. Then of course, $k$ is a vector space 
over $\mathbb{Q}$.
Define the set $S$ as follows:
$$
S=\{ (A, \preceq) | A \text{ is a } \mathbb{Q}\text{-subspace of } \mathbf{k} 
\text{ and } 
\preceq \text{ is a weak order on } A.\}.
$$
We make $S$ into a partially ordered set $(S, \leq)$ as follows:
$$
(A_1, \preceq_1) \leq (A_2,\preceq_2) \iff  
A_1 \subseteq A_2 \text{ and } \preceq_2 |_{A_1} = \preceq_1.
$$

The characteristic subfield $\mathbb{Q}$ of $\mathbf{k}$ can be viewed as 
the characteristic subfield of the real numbers and so we can put the 
standard order on it. Thus $S$ 
is not empty.

It is easy to verify that any chain $\{(A_i,\preceq_i)_{i \in I}\}$ 
in $(S, \leq)$ has an upper bound $(\cup_{i\in I}A_i,\preceq)$ and thus
Zorn's lemma gives us a maximal element $(M, \preceq)$ of $(S, \leq)$.

Suppose $M \neq \mathbf{k}$, then we can find $a \in \mathbf{k} \setminus M$ 
and thus 
$M'= M \oplus \mathbb{Q}a$ is a $\mathbb{Q}$-subspace of $\mathbf{k}$.  
We define $\preceq'$ on $M'$ as follows:
$$
m_1 + q_1a \prec' m_2 + q_2a \iff m_1 \prec m_2 
\text{ or } m_1=m_2 \text{ and } q_1 < q_2.
$$
It is easy to verify that $\preceq'$ is a weak order on $M'$ which restricts to
$\preceq$ on $M$. 

Thus $(M,\preceq) < (M',\preceq')$ which is a contradiction 
as $(M,\preceq)$ is maximal. Thus we conclude $M=\mathbf{k}$ and hence 
that we can weakly order $\mathbf{k}$. 
\end{proof}

We now use Proposition~\ref{pro: weakorder} to weakly order any pseudomonoid.

\begin{defn}
Let $G \subseteq \mathbf{k}$ be a psuedomonoid. A weak order on $G$ is the 
restriction of some weak order on $\mathbf{k}$. 
\end{defn}

Proposition~\ref{pro: weakorder} shows all pseudomonoids possess a weak order.
(Since we require our pseudomonoids to be in $\mathbf{k}$, by definition.)

\begin{defn}
Let $G$ be a pseudomonoid with weak order $\preceq$. 

We say that $x \in (G, \preceq)$ is positive if $0 \prec x$ and we say 
$x$ is negative if $x \prec 0$.

If we set $P$ to be the set of positive elements in $(G, \preceq)$ 
and $N$ to be 
the set of negative elements in $(G, \preceq)$, then it is easy to see that 
$\{P,N,\{0\}\}$ is a partition of $G$.

A maximum element $M$ of $(G, \preceq)$ is an 
element such that $x \preceq M$ for all 
$x \in G$. Similarly a minimum element $m$ of $(G,\preceq)$ is an element 
such that $m \preceq x$ for all $x \in G$.
Notice that if there is a maximum element, it is unique as $\preceq$ is a 
linear order and similarly for a minimum element.

An extreme element of $(G,\preceq)$ is either a maximum or a minimum element.
\end{defn}

We collect in the next lemma some basic but useful facts about ordered 
psuedomonoids.

\begin{lem}
\label{lem: basicorder}
Let $(G, \preceq)$ be a pseudomonoid with a weak order. Then: \\
(a) If $G$ possesses a minimum element $m$ then either $m=0$ or 
$m$ is the unique negative element in $(G,\preceq)$. \\
(b) If $G$ possesses a maximum element $M$ then either $M=0$ or 
$M$ is the unique positive element in $(G,\preceq)$. \\
(c) If $G$ possesses a minimum and a maximum element then the order of $G$ 
is less than or equal to 3. \\
(d) If the order of $G$ is infinite, then $G$ possesses at most one extreme 
element. \\
(e) If $G$ is a finite pseudomonoid, then the order of $G$ is either one, two 
or three. Furthermore, for each of these orders, there is a unique 
pseudomonoid up to equivalence.
\end{lem}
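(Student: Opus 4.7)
The plan is to handle parts (a) and (b) first (they are symmetric), then derive (c) from them as a counting statement, obtain (d) as a trivial contrapositive, and finally combine (c) with a pseudomonoid-axiom case analysis to get (e). The only non-trivial ingredient is the compatibility of $\preceq$ with addition, which will be used repeatedly to ``slide'' elements and force contradictions against minimality/maximality.

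For (a), suppose $m$ is the minimum element of $(G,\preceq)$. Since $0 \in G$ we have $m \preceq 0$, so $m$ is either $0$ or negative. In the latter case, I would show $m$ is the unique negative element: if $n \in G$ were a second negative element distinct from $m$, then $n + m \in G$ by the pseudomonoid axiom, and from $n \prec 0$ plus compatibility I get $n + m \prec 0 + m = m$, contradicting minimality. Part (b) follows by the same argument with inequalities reversed.

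Part (c) is then immediate: if both a minimum $m$ and a maximum $M$ exist, (a) and (b) force every negative element of $G$ to equal $m$ and every positive element to equal $M$, so $G \subseteq \{m, 0, M\}$ and $|G| \leq 3$. Part (d) is the contrapositive, since an infinite $G$ cannot satisfy $|G| \leq 3$.

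For (e), finiteness of $G$ guarantees both extremes exist (a finite linearly ordered set has a least and greatest element), so (c) gives $|G| \in \{1,2,3\}$. Uniqueness up to the equivalence $S \mapsto kS$ is then a short case check: if $|G|=1$ then $G=\{0\}$; if $|G|=2$ then $G=\{0,a\}$ with $a \neq 0$, equivalent via scaling by $a^{-1}$ to $\{0,1\}$; if $|G|=3$ then $G=\{m,0,M\}$ with $m$ negative and $M$ positive, and the pseudomonoid axiom applied to the distinct pair $m,M$ forces $m+M \in G$. Since $m+M=m$ would give $M=0$ and $m+M=M$ would give $m=0$, the only possibility is $m+M=0$, i.e., $M=-m$, so $G=\{-M,0,M\}$ is equivalent via scaling by $M^{-1}$ to $\{-1,0,1\}$. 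I expect the main (mild) obstacle to be simply making sure the pseudomonoid axiom's ``$a \neq b$'' hypothesis is invoked correctly in part (a) and in the order-three case of (e); everything else is a direct application of the order compatibility.
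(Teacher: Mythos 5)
Your proposal is correct and follows essentially the same route as the paper: the same ``add $m$ to a second negative element to contradict minimality'' argument for (a), the same counting for (c), and the same use of $m+M\in G$ to force $M=-m$ in the order-three case of (e). The only difference is that you spell out slightly more carefully why $m+M$ must equal $0$ rather than $m$ or $M$, which the paper leaves implicit.
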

\begin{proof}
For (a), let $m$ be a minimum element and assume $m$ is not zero. Then 
we must have $m \prec 0$ as $m$ is a minimum element. 

Suppose there were $x \prec 0$ with $x \neq m$, then 
$x + m \prec 0 +m=m$ with $x + m \in G$ as $G$ is a pseudomonoid. This 
contradicts the minimality of $m$ and thus we conclude there is no such $x$, 
i.e., $m$ is the unique negative element. 

The proof of (b) is similar to (a) and is left to the reader. 
For (c), note that if $G$ has a minimum element $m$ and a maximum element $M$ 
then it follows that the set of nonpositive elements is $\{0,m\}$  by (a) 
and the set of nonnegative elements is $\{0,M\}$ by (b). Thus 
$G=\{0,m,M\}$ and hence $G$ has order less than or equal to 3. 
(Exact order depends 
on whether or not the elements $\{0,m,M\}$ are distinct or not.) 

(d) follows immediately from (c). The first part of 
(e) also follows immediately from (c) since 
any weak order on a finite pseudomonoid has a maximum and a minimum element.

Note that if the order of $G$ is three and $G=\{0,m,M\}$, then we must 
have $M=-m$ since $M+m \in G$.
Thus it is easy to see that $[[G]]=[[\{-1,0,1\}]]$.
If $G$ has order two, obviously $[[G]]=[[\{0,1\}]]$ and if $G$ has order one, 
then $G=\{0\}$. So we are done.
\end{proof}

\begin{rem}
From Proposition~\ref{pro: sc}, we have a complete list of finite 
dimensional, self-centralizing Lie algebras (in the case that $\mathbf{k}$ is 
algebraically closed). The reader can easily verify, 
that each of these is strongly graded, graded by a finite pseudomonoid 
of size 
one, two or three.
\end{rem}

\begin{defn}
Let $\mathfrak{L}=\oplus_{g \in G}E_g$ be a strongly graded Lie algebra and 
suppose we have a weak order $\preceq$ on the pseudomonoid $G$.
Then if $\alpha \in \mathfrak{L}$ is nonzero we can uniquely write 
$$
\alpha = \sum_{i=1}^n e_{g_i}
$$
where $g_1 \prec g_2 \prec \dots \prec g_n \in G$ and 
$e_{g_i} \in E_{g_i}$ is 
nonzero for all $1 \leq i \leq n$.

We call $g_1 \in (G, \preceq)$ the initial index of $\alpha$ and write 
$g_1 = \Init (\alpha)$.

We call $g_n \in (G, \preceq)$ the terminal index of $\alpha$ and write 
$g_n = \Term (\alpha)$.
\end{defn}

\begin{lem}
\label{lem: index}
Let $\mathfrak{L}$ be a strongly graded Lie algebra, 
graded by a weakly ordered 
pseudomonoid $(G,\preceq)$. Then if $x,y$ are nonzero elements of 
$\mathfrak{L}$, we have: \\
\noindent
(a) If $\Term (x) \neq \Term (y)$ then $[x,y] \neq 0$ and 
$$
\Term ([x,y])=\Term (x) + \Term (y).
$$ 
\noindent
(b) If $\Init (x) \neq \Init (y)$ then $[x,y] \neq 0$ and 
$\Init ([x,y])=\Init (x) + \Init (y)$.
\end{lem}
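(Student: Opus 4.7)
The plan is to expand $[x,y]$ in the grading and track the top-graded piece. Write
$$
x = \sum_{i=1}^n x_{g_i}, \qquad y = \sum_{j=1}^m y_{h_j},
$$
where $g_1 \prec \cdots \prec g_n$ are the distinct indices of $x$ with $x_{g_i} \in E_{g_i} \setminus \{0\}$, and analogously for $y$. So $\Init(x)=g_1$, $\Term(x)=g_n$, $\Init(y)=h_1$, $\Term(y)=h_m$. By bilinearity, $[x,y] = \sum_{i,j} [x_{g_i}, y_{h_j}]$. By the strong grading, $\dime E_a = 1$ and $[E_a, E_b] \subseteq E_{a+b}$ via the rule $[e_a,e_b] = (b-a)e_{a+b}$; in particular, for nonzero $u \in E_a$ and $v \in E_b$, the bracket $[u,v]$ is nonzero in $E_{a+b}$ precisely when $a \neq b$ (and is zero otherwise).

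For part (a), I would focus on the candidate top term $[x_{g_n}, y_{h_m}]$. Since $\Term(x) \neq \Term(y)$, i.e., $g_n \neq h_m$, this term is nonzero and lies in $E_{g_n + h_m}$. The key observation is that no other summand $[x_{g_i}, y_{h_j}]$ can contribute to $E_{g_n+h_m}$: for any $(i,j) \neq (n,m)$, one has $g_i \preceq g_n$ and $h_j \preceq h_m$ with at least one inequality strict, so by translation-invariance of $\preceq$ we conclude $g_i + h_j \prec g_n + h_m$. Hence the $E_{g_n + h_m}$-component of $[x,y]$ equals $[x_{g_n}, y_{h_m}] \neq 0$, while every other nonzero summand lives in some $E_g$ with $g \prec g_n + h_m$. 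Thus $[x,y] \neq 0$ and $\Term([x,y]) = g_n + h_m = \Term(x) + \Term(y)$.

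Part (b) is proved by the symmetric argument applied to the bottom: the term $[x_{g_1}, y_{h_1}] \in E_{g_1 + h_1}$ is nonzero since $g_1 \neq h_1$, and for any $(i,j) \neq (1,1)$ translation-invariance yields $g_1 + h_1 \prec g_i + h_j$, so no other summand contributes to $E_{g_1 + h_1}$. There is no real obstacle here; the one potential worry is cancellation in the extremal eigenspace, and translation-invariance of the weak order $\preceq$ rules this out by forcing all other terms strictly lower (resp.\ strictly higher) in the grading. The pseudomonoid axiom guarantees $g_n + h_m \in G$ and $g_1 + h_1 \in G$ whenever the corresponding indices differ, so the top and bottom components live where we need them to.
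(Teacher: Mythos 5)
Your proof is correct and follows the same route as the paper: expand $x$ and $y$ in the grading, observe that $[x_{g_n},y_{h_m}]$ is a nonzero element of $E_{g_n+h_m}$ when $g_n\neq h_m$, and use translation-invariance of the weak order to see that no other summand can land in that eigenspace. The paper leaves the last step as ``easily seen''; you have simply spelled it out.
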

\begin{proof}
$\mathfrak{L}=\oplus_{g \in G}E_g$ so we can take a basis $\{e_g\}_{g \in G}$ 
of $\mathfrak{L}$ with $e_g \in E_g$ for all $g \in G$.
First we expand $x$ in the basis $\{e_g\}_{g \in G}$. Thus 
$$
x = \sum_{i=1}^n x_{g_i} e_{g_i}
$$
with $g_1 \prec g_2 \prec \dots \prec g_n$ and $x_{g_i} \neq 0$ for all 
$1 \leq i \leq n$. Thus $\Init (x)=g_1$ and $\Term (x)=g_n$.

We can expand $y$ in a 
similar manner. 
$$
y = \sum_{j=1}^m y_{h_j} e_{h_j}
$$
with $h_1 \prec \dots \prec h_m$ and $y_{h_j} \neq 0$ all $1 \leq j \leq m$.
Thus $\Init (y)=h_1$ and $\Term (y)=h_m$.

Then we calculate that 
$$
[x,y] = \sum_{i=1}^n \sum_{j=1}^m x_{g_i} y_{h_j} [e_{g_i},e_{h_j}].
$$

Hence, if $g_n \neq h_m$ then $0 \neq [e_{g_n},e_{h_m}] \in E_{g_n + h_m}$ 
and 
$g_n + h_m$ is easily seen to be the 
terminal index of 
$[x,y]$, 
and similarly, if $g_1 \neq h_1$ then $g_1 + h_1$ is the initial index of 
$[x,y]$.
\end{proof}

\begin{cor}
\label{cor: indexeigen}
Let $\mathfrak{L}$ be as in Lemma~\ref{lem: index}. Suppose $\alpha \in 
\mathfrak{L}$ is nonzero. Then: \\
(a) If $\Init (\alpha) \neq 0$ then every eigenvector $x$ of $ad(\alpha)$ has 
$\Init (x) = \Init (\alpha)$. \\
(b) If $\Term (\alpha) \neq 0$ then every eigenvector $x$ of $ad(\alpha)$ has
$\Term (x) = \Term (\alpha)$. \\
(c) $\dime(E_a(\alpha)) \leq 1$ for all $a \in \mathbf{k}$.
\end{cor}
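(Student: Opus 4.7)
The plan is to deduce all three parts from Lemma~\ref{lem: index}, with part~(c) requiring a case split. For part~(a), I would take an eigenvector $x$ of $ad(\alpha)$ with eigenvalue $a$ and assume for contradiction that $\Init(x) \neq \Init(\alpha)$. Lemma~\ref{lem: index}(b) then forces $[\alpha,x]$ to be nonzero with $\Init([\alpha,x]) = \Init(\alpha) + \Init(x)$; since $[\alpha,x] = ax$ and initial indices are preserved under nonzero scalar multiplication, this equation simplifies to $\Init(\alpha) = 0$, contradicting the hypothesis of~(a). Note the nonvanishing of $[\alpha,x]$ automatically rules out the degenerate possibility $a = 0$. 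Part~(b) is entirely symmetric, using Lemma~\ref{lem: index}(a).

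For part~(c), I would split on whether $\alpha$ has a nonzero initial or terminal index. If $\Init(\alpha) \neq 0$, suppose $x, y \in E_a(\alpha)$ are linearly independent; by~(a) both satisfy $\Init(x) = \Init(y) = \Init(\alpha)$. Denoting their coefficients on $e_{\Init(\alpha)}$ in a graded basis by $c_x, c_y \in \mathbf{k}$ (both nonzero by the initial-index condition), the element $z := c_y x - c_x y$ still lies in $E_a(\alpha)$, is nonzero by linear independence of $x$ and $y$, and by construction has vanishing $e_{\Init(\alpha)}$-coefficient. Since every monomial appearing in $x$ or $y$ has index $\succeq \Init(\alpha)$, the same holds for $z$, and the cancellation at $\Init(\alpha)$ yields $\Init(z) \succ \Init(\alpha)$, contradicting~(a). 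The symmetric argument using terminal indices and~(b) dispatches the case $\Term(\alpha) \neq 0$.

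The remaining (and most delicate) case is $\Init(\alpha) = \Term(\alpha) = 0$, where the arguments of~(a) and~(b) give no leverage since their hypotheses both fail. The key observation is that if both the smallest and the largest indices appearing in the graded expansion of $\alpha$ equal $0$, then $\alpha$ must be a nonzero scalar multiple of $e_0$, say $\alpha = ce_0$. Since $ad(\alpha) = c \cdot ad(e_0)$, its eigenspaces coincide (up to a rescaling of eigenvalues by $c$) with the one-dimensional graded pieces $E_g$, so $\dime(E_a(\alpha)) \leq 1$ follows immediately from the definition of a strongly graded Lie algebra. I expect this edge case to be the only real obstacle, since the main linear-combination trick in~(c) relies essentially on the existence of some extreme index of $\alpha$ other than $0$ to generate a contradiction.
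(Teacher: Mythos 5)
Your proof is correct and follows essentially the same route as the paper: parts (a) and (b) by applying Lemma~\ref{lem: index} to the eigenvalue equation $[\alpha,x]=ax$ and concluding $\Init(\alpha)=0$ (resp.\ $\Term(\alpha)=0$), and part (c) by the linear-combination cancellation trick at the extreme index, with the degenerate case $\Init(\alpha)=\Term(\alpha)=0$, $\alpha=ce_0$, handled exactly as in the paper. No gaps.
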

\begin{proof}
For (a), let $\alpha$ have $\Init (\alpha) \neq 0$ and 
assume $x$ is an eigenvector of $ad(\alpha)$ with 
$\Init (x) \neq \Init (\alpha)$. Then by Lemma~\ref{lem: index} we have 
$[\alpha,x]$ is nonzero and 
$$
\Init ([\alpha,x])=\Init (x) + \Init (\alpha).
$$
However, as $x$ is an eigenvector, we also have $[\alpha,x]=\mu x$ for some 
$\mu \in \mathbf{k}$. Since $[\alpha,x] \neq 0$ we conclude $\mu \neq 0$ and 
hence that 
$$
\Init (\alpha) + \Init (x) = \Init ([\alpha, x]) = \Init (\mu x) = \Init (x).
$$
Thus $\Init (\alpha)=0$ which contradicts our hypothesis. Thus we conclude every 
eigenvector of $\alpha$ must have the same initial index as $\alpha$.
The proof of (b) is similar and is left to the reader.

For (c), note that if both $\Init (\alpha)$ and $\Term (\alpha)$ are zero, then 
$\alpha$ is a nonzero scalar multiple of $e_0$ and the result is clear.
So we can assume one of $\Init (\alpha)$ or $\Term (\alpha)$ is nonzero.
For concreteness, let us assume $\Init (\alpha) \neq 0$, the proof for the case 
where $\Term (\alpha) \neq 0$ being similar and left to the reader.

Then if $\dime(E_a(\alpha)) \geq 2$ for some $a \in \mathbf{k}$. We can 
find linearly independent $x,y \in E_a(\alpha)$. By (a), we have 
$\Init (x)=\Init (y)=\Init (\alpha)$. Then it is clear we can form a nonzero linear 
combination of $x$ and $y$ whose $\Init (\alpha)$-component is zero. Call this 
element $z$ then this means that $\Init (z)$ is not $\Init (\alpha)$. 
This is a contradiction as $z$ is nonzero and in $E_a(\alpha)$ 
and so, by (a) again, must have $\Init (z) = \Init (\alpha)$.
\end{proof}

We are now ready to prove an important proposition. 
This proposition will enable 
us to define the spectrum of a strongly graded 
Lie algebra and use it as a tool to distinguish between two 
such Lie algebras.

\begin{pro}
\label{pro: Specwd}
Let $\mathfrak{L}$ be an infinite dimensional, strongly graded Lie algebra, 
graded by a pseudomonoid $G$. Choose a weak order $\preceq$ on $G$ and 
 let $\{e_g\}_{g \in G}$ be the usual basis of $\mathfrak{L}$.

Suppose we have nonzero $\alpha \in \mathfrak{L}$ such that 
$M(\alpha)=\mathfrak{L}$, then:\\
\noindent
(a) If $(G, \preceq)$ 
has no nonzero extreme elements, $\alpha = ke_0$ for some 
nonzero $k \in \mathbf{k}$. Thus $\spec(\alpha)=k\spec(e_0)$ and 
$$
[[\spec(\alpha)]]=[[\spec(e_0)]]=[[G]].
$$ 
\noindent
(b) If $(G, \preceq)$ has a nonzero extreme element $m$, then $m$ 
is unique and 
$$
\alpha = ke_0 + k'e_m
$$ 
for some $k,k' \in \mathbf{k}$ with $k \neq 0$.
Furthermore we still have 
$$
[[\spec(\alpha)]]=[[\spec(e_0)]]=[[G]].
$$
\end{pro}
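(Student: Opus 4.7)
The overall plan is in two stages: first pin down the form of $\alpha$ using Corollary~\ref{cor: indexeigen} and Lemma~\ref{lem: basicorder}, then compute $\spec(\alpha)$ by decomposing $ad(\alpha)$ along translation orbits in $G$. For the first stage, suppose $g_\ast := \Init(\alpha) \neq 0$. By Corollary~\ref{cor: indexeigen}(a), every eigenvector of $ad(\alpha)$ has initial index $g_\ast$, so any nonzero linear combination of eigenvectors has initial index $\succeq g_\ast$ (cancelling the $e_{g_\ast}$-component only strictly increases the initial index). Since $M(\alpha) = \mathfrak{L}$, every basis vector $e_h$ with $h \in G$ is such a combination, forcing $h \succeq g_\ast$ for all $h \in G$; thus $g_\ast$ is the minimum of $G$, and by Lemma~\ref{lem: basicorder}(a) it is the unique negative element of $G$, in particular a nonzero extreme element. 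The argument for $\Term(\alpha)$ is dual.

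Part (a) then follows immediately: if $G$ has no nonzero extreme elements, then $\Init(\alpha) = \Term(\alpha) = 0$, so $\alpha = ke_0$ with $k \neq 0$, and $\spec(\alpha) = k\spec(e_0) = kG$. For part (b), Lemma~\ref{lem: basicorder}(d) forces uniqueness of $m$ because $G$ is infinite. Taking $m$ to be the minimum (the maximum subcase is entirely dual), we have $\Term(\alpha) = 0$, $\Init(\alpha) \in \{0, m\}$, and $\{g \in G : m \preceq g \preceq 0\} = \{m, 0\}$ by Lemma~\ref{lem: basicorder}(a); this yields $\alpha = ke_0 + k'e_m$ with $k \neq 0$ and $k' \in \mathbf{k}$. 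When $k' = 0$ the spectrum calculation reduces to part (a).

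The main work is the $k' \neq 0$ subcase. A direct computation gives
$$
ad(\alpha)(e_g) = kg\,e_g + k'(g - m)\,e_{g+m},
$$
so $ad(\alpha)$ preserves each subspace $V_c := \spant\{e_h : h \in c\}$ as $c$ ranges over the orbits of translation-by-$m$ on $G$, i.e., the maximal subsets of the form $\{h_0 + jm : j \in \mathbb{Z}\} \cap G$; these orbits partition $G$, yielding $\mathfrak{L} = \oplus_c V_c$. On a finite orbit, $V_c$ is finite-dimensional and $ad(\alpha)|_{V_c}$ is triangular with pairwise distinct diagonal entries $\{kg : g \in c\}$ (using $k \neq 0$), hence is diagonalizable with exactly those eigenvalues. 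The main obstacle is ruling out infinite orbits: on an infinite orbit, the recurrence $(\lambda - kh)\mu_h = k'(h - 2m)\mu_{h-m}$ obtained by comparing coefficients in $ad(\alpha)v = \lambda v$ for $v = \sum \mu_h e_h$ forces any candidate eigenvector to have infinite support and thus fail to lie in $\mathfrak{L}$, so $ad(\alpha)|_{V_c}$ would admit no eigenvectors, contradicting $M(\alpha) = \mathfrak{L}$. Hence every orbit is finite, and unioning the eigenvalue sets over all orbits gives $\spec(\alpha) = kG$, so $[[\spec(\alpha)]] = [[kG]] = [[G]]$.
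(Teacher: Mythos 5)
Your first stage --- deriving $\alpha=ke_0$ or $\alpha=ke_0+k'e_m$ from Corollary~\ref{cor: indexeigen} and Lemma~\ref{lem: basicorder} --- is correct and coincides with the paper's argument, as does the reduction of the $k'=0$ subcase to part (a). The gap is in the $k'\neq 0$ subcase, and it is fatal: the claim that an infinite translation orbit admits no finitely supported eigenvector is false, and consequently so is the conclusion that every orbit is finite. Test it on the classical Witt algebra: $G=\{-1,0,1,2,\dots\}$, $m=-1$, $e_n=x^{n+1}\partial$, and $\alpha=e_0+e_{-1}=(x+1)\partial$. Here $G$ lies in a single coset of $\mathbb{Z}m$, so there is exactly one orbit and it is infinite; yet $M(\alpha)=\mathfrak{L}$, witnessed by the finitely supported eigenvectors $(x+1)^{n+1}\partial$, $n\geq -1$. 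The error is a conflation of two finiteness conditions. Solving your recurrence downward from the top of the support, one finds $\lambda=k\Term(v)$ and that the support of $v$ is the chain $\Term(v),\ \Term(v)+m,\ \Term(v)+2m,\dots$ inside $G$; this chain terminates precisely when it reaches $m$ (because $2m\notin G$), which is perfectly compatible with the orbit being infinite in the upward direction. Worse, in your setting finite orbits essentially never occur: any orbit not containing $m$ is closed under adding $m$ and hence infinite, so ``all orbits finite'' would force $G$ to equal the single finite orbit of $m$, contradicting infinite-dimensionality. Your case analysis therefore establishes nothing in the only subcase that required work.

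The paper's route avoids this entirely: after normalizing $k=1$, it compares top components in $[\alpha,x]=\mu x$ to conclude $\mu=\Term(x)\in G$ for every eigenvector $x$, giving $\spec(\alpha)\subseteq\spec(e_0)$; it then observes that $\mathfrak{L}$ is strongly graded by the eigenspaces of $\alpha$ (eigenvectors for distinct eigenvalues have distinct terminal indices, hence nonzero bracket, by Lemma~\ref{lem: index}) and reverses the roles of $\alpha$ and $e_0=\alpha-k'e_m$ to obtain the reverse inclusion. If you wish to salvage your orbit decomposition, you would first have to show that $M(\alpha)=\mathfrak{L}$ forces every downward chain $g,\ g+m,\ g+2m,\dots$ to terminate at $m$ (equivalently, $G\subseteq\mathbb{Z}m$), and only then read off the eigenvalues $kg$; at that point the paper's symmetry argument is both shorter and cleaner.
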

\begin{proof}
Assume the setup as in the statement of the proposition. 

First note that if $\Init (\alpha) \neq 0$ then Corollary~\ref{cor: indexeigen} 
shows that all the eigenvectors of $\alpha$ have initial index equal to 
$\Init (\alpha)$. However these eigenvectors span $\mathfrak{L}$ as 
$M(\alpha)=\mathfrak{L}$ and so it follows easily that $\Init (\alpha)$ is a 
nonzero minimal element of $(G,\preceq)$.

Similarly if $\Term (\alpha) \neq 0$ then $\Term (\alpha)$ is a nonzero 
maximal element of 
$(G, \preceq)$. 

For (a), note that our previous arguments show that if $(G, \preceq)$ has no 
nonzero extreme elements, that $\Init (\alpha)=0=\Term (\alpha)$ and hence 
that $\alpha = ke_0$ for some nonzero $k \in \mathbf{k}$ from which the rest 
of the conclusion in (a), is obvious.

For (b), note that we can assume that at least one of $\Term (\alpha), 
\Init (\alpha)$ is a nonzero extreme element of $(G, \preceq)$ or else the 
conclusion would follow from our argument for (a).

Since $\mathfrak{L}$ is infinite dimensional, $G$ is infinite and hence 
$(G, \preceq)$ can possess at most one extreme element by 
Lemma~\ref{lem: basicorder}, part (d). 
Thus for (b), we can assume $(G,\preceq)$ has 
exactly one extreme element $m$ and that it is a minimum. 
(If it was a maximum, reorder $G$ by setting $x \prec' y \iff y \prec x$.
This reordering switches $\Init (\alpha)$ and $\Term (\alpha)$ but does not 
change the conclusions of this proposition.)

Thus we have that without loss of generality, 
$\Init (\alpha)=m \prec 0$ 
is the minimum of $(G, \preceq)$ and that $\Term (\alpha)=0$
(Recall if $\Term (\alpha) \neq 0$, we showed before that it would be a 
nonzero maximum which is a contradiction to our assumption). Thus we have 
$$ 
\alpha = k'e_m + T + ke_0
$$
where $k,k' \in \mathbf{k}$ are nonzero and $T$ consists of terms which 
have components corresponding to elements in $g \in G$ which have 
$m \prec g \prec 0$. By Lemma~\ref{lem: basicorder}, part (a), 
there are no such elements $g$, 
and so we conclude that $\alpha = k'e_m + ke_0$. 

It remains to show that
$[[\spec(\alpha)]]=[[G]]$. 
Since $[[\spec(\alpha)]]$ does not change if we scale 
$\alpha$, we will assume from now on that $k=1$. So $\alpha = k'e_m + e_0$.

Suppose $x$ is an eigenvector of $ad(\alpha)$ corresponding to eigenvalue 
$\mu \in \mathbf{k}$ with $0 \prec \Term (x)$. Then $x = ae_{\Term (x)} + D$ 
where $D$ has nonzero components only in indices $g \in G$ with 
$g \prec \Term (x)$, and $a \in \mathbf{k}$ is nonzero. Then 
$$
[\alpha,x] = [k'e_m + e_0, ae_{\Term (x)} + D] 
= a\Term (x)e_{\Term (x)} + D'.
$$
where $D'$ has nonzero components only in indices $g \in G$ with 
$g \prec \Term (x)$.

However $[\alpha, x]=\mu x$ and so we have
$$
a\Term (x)e_{\Term (x)} + D' = \mu a e_{\Term (x)}  + \mu D
$$
from which it follows that $\mu = \Term (x) \in G$.

Now if $x$ is an eigenvector of $ad(\alpha)$ with $\Term (x) \preceq 0$ then 
$x=ce_m + de_0$ and it is easy to check that 
 $x$ must be a scalar multiple of $e_m$ or of $\alpha$ corresponding to the 
eigenvalues $m$ and $0$ respectively. In any case, we 
have $[\alpha,x]=\Term (x) x$.

Thus we see that if $x$ is any eigenvector of $ad(\alpha)$, then $x$ 
corresponds to the eigenvalue $\Term (x) \in G$. So 
$\spec(\alpha) \subseteq \spec(e_0)=G$. Furthermore, $m,0 \in \spec(\alpha)$, 
with $m$ a minimum element of $\spec(\alpha)$ 
under the ordering inherited from $G$.

However, we also see that if $x, y$ are eigenvectors of $ad(\alpha)$ 
corresponding to 
different eigenvalues, then $\Term (x) \neq \Term (y)$ and 
we must have $[x,y] \neq 0$ by 
Lemma~\ref{lem: index}. Since $M(\alpha)=\mathfrak{L}$ and 
$\dime(E_a(\alpha)) \leq 1$ for all $a \in \mathbf{k}$ by 
Corollary~\ref{cor: indexeigen}, we conclude that $\mathfrak{L}$ is strongly 
graded with respect to the eigenspaces of $\alpha$.

Thus reversing the roles of $\alpha$ and $e_0$ in the part of the 
proof where we showed $\spec(\alpha) \subseteq \spec(e_0)$, 
and noting that $e_0 = \alpha - k'e_m$, we conclude that 
$\spec(e_0) \subseteq \spec(\alpha)$ and hence that $\spec(e_0) = \spec(\alpha)$
and thus we are done.
\end{proof}

\begin{defn}
Let $\mathfrak{L}$ be a strongly graded Lie algebra.
We define 
$$
\spec(\mathfrak{L})=[[\spec(\alpha)]]
$$ 
where $\alpha$ is a nonzero 
element in $\mathfrak{L}$ with $M(\alpha)=\mathfrak{L}$.
\end{defn}

Note that $\spec(\mathfrak{L})$ is well-defined if $\mathfrak{L}$ is infinite 
dimensional, by 
Proposition~\ref{pro: Specwd}. 

If $\mathfrak{L}$ is finite dimensional, then 
Corollary~\ref{cor: indexeigen}, part (c), shows that 
$$
\dime(E_a(\alpha)) \leq 1
$$ 
for all $a \in \mathbf{k}$ and so we must have the order of $\spec(\alpha)$ 
is equal to the dimension of $\mathfrak{L}$ for any nonzero $\alpha$ with 
$M(\alpha)=\mathfrak{L}$. Since there is exactly one pseudomonoid of order 
$\spec(\alpha)$ up to equivalence 
by Lemma~\ref{lem: basicorder}, $\spec(\mathfrak{L})$ is 
well-defined in this case also.

We now show that $\spec(\mathfrak{L})$ is truly an invariant of $\mathfrak{L}$.

\begin{pro}
\label{pro: Specinvariance}
Let $\mathfrak{L}, \mathfrak{L'}$ be two Lie algebras and 
$f: \mathfrak{L} \rightarrow \mathfrak{L'}$ be a Lie algebra 
homomorphism. Then: \\
\noindent
(a) For every $\alpha \in \mathfrak{L}$ and $a \in \mathbf{k}$, we have 
$$
f(E_a(\alpha)) \subseteq E_a(f(\alpha)).
$$
Hence $f(M(\alpha)) \subseteq M(f(\alpha))$. \\
\noindent
(b) If $f$ is injective, then $\spec(\alpha) \subseteq \spec(f(\alpha))$. \\
\noindent
(c) If $f$ is bijective, then $\spec(\alpha) = \spec(f(\alpha))$ and 
furthermore 
$$
f(M(\alpha))=M(f(\alpha)).
$$ \\
(d) If $\mathfrak{L}, \mathfrak{L'}$ are two 
strongly graded Lie 
algebras, and $f$ is an isomorphism, 
then $\spec(\mathfrak{L})=\spec(\mathfrak{L}')$.
\end{pro}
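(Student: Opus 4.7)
The plan is to prove the four parts sequentially; part (a) is the only calculation and the remaining parts are formal consequences, so I expect no serious obstacle.

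For (a), I would take a nonzero $x \in E_a(\alpha)$, so $[\alpha, x] = ax$. Applying $f$ and using that it is a $\mathbf{k}$-linear Lie algebra homomorphism gives $[f(\alpha), f(x)] = f([\alpha,x]) = f(ax) = a f(x)$, placing $f(x) \in E_a(f(\alpha))$. The inclusion $f(M(\alpha)) \subseteq M(f(\alpha))$ follows at once from the decomposition $M(\alpha) = \oplus_{a \in \mathbf{k}} E_a(\alpha)$.

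Part (b) is immediate from (a): if $a \in \spec(\alpha)$, pick a nonzero $x \in E_a(\alpha)$; then injectivity of $f$ gives $f(x) \neq 0$, and by (a) this nonzero element lies in $E_a(f(\alpha))$, so $a \in \spec(f(\alpha))$. For (c), I would apply (a) and (b) symmetrically to both $f$ and $f^{-1}$ (which is also a Lie algebra isomorphism, with $f^{-1}(f(\alpha))=\alpha$). This yields the two containments needed for $\spec(\alpha) = \spec(f(\alpha))$. Similarly, (a) applied to $f^{-1}$ yields $f^{-1}(M(f(\alpha))) \subseteq M(\alpha)$, i.e.\ $M(f(\alpha)) \subseteq f(M(\alpha))$, which together with (a) for $f$ gives $f(M(\alpha)) = M(f(\alpha))$.

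For (d), I would pick a nonzero $\alpha \in \mathfrak{L}$ with $M(\alpha) = \mathfrak{L}$ — for instance the element $e_0$ from the distinguished basis provided by the strongly graded structure. Then $f(\alpha)$ is nonzero, and by (c), $M(f(\alpha)) = f(M(\alpha)) = f(\mathfrak{L}) = \mathfrak{L}'$. Thus $f(\alpha)$ is a valid witness for computing $\spec(\mathfrak{L}')$, and using (c) once more gives
$$
\spec(\mathfrak{L}') = [[\spec(f(\alpha))]] = [[\spec(\alpha)]] = \spec(\mathfrak{L}).
$$
The only subtle point to flag is that $\spec(\mathfrak{L}')$ is well-defined independently of the choice of witness — in the infinite-dimensional case this is exactly the content of Proposition~\ref{pro: Specwd}, and in the finite-dimensional case it follows from the classification in Lemma~\ref{lem: basicorder}(e).
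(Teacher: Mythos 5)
Your proposal is correct and follows essentially the same route as the paper: the direct computation $[f(\alpha),f(x)]=af(x)$ for (a), injectivity for (b), applying (a)--(b) to $f$ and $f^{-1}$ for (c), and transporting a witness $\alpha$ with $M(\alpha)=\mathfrak{L}$ via (c) together with Proposition~\ref{pro: Specwd} for (d). Your explicit remark about well-definedness of $\spec(\mathfrak{L}')$ in the finite-dimensional case is a nice touch that matches the paper's discussion following the definition of $\spec(\mathfrak{L})$.
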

\begin{proof}
For (a), notice that if $x \in E_a(\alpha)$, then 
$[\alpha,x]=ax$ and hence 
$$
f([\alpha,x])=af(x).$$
Since $f$ is a Lie algebra homomorphism, 
we have $f([\alpha,x])=[f(\alpha),f(x)]$ and so we conclude 
$[f(\alpha),f(x)]=af(x)$ and thus $f(x) \in E_a(f(\alpha))$.
Also $M(\alpha)=\oplus_{a \in \mathbf{k}}E_a(\alpha)$ and so  
$$
f(M(\alpha))=\oplus_{a \in \mathbf{k}}f(E_a(\alpha)) \subseteq 
\oplus_{a \in \mathbf{k}}E_a(f(\alpha))=M(f(\alpha)).
$$
This gives us (a).

For (b), notice that if $f$ is injective, and we had nonzero 
$x \in E_a(\alpha)$, then $f(x)$ would be nonzero, and by (a), it would lie in 
$E_a(f(\alpha))$. This proves (b).

For (c), notice that since $f$ is bijective, $f^{-1}$ exists and is in fact a 
Lie algebra homomorphism. Thus from (a) and (b) applied to $(f, \alpha)$ and 
$(f^{-1}, f(\alpha))$ we get 
$$
f(M(\alpha)) \subseteq M(f(\alpha)) \text{ and }  
f^{-1}(M(f(\alpha))) \subseteq 
M(f^{-1}(f(\alpha)))
$$ 
giving us $f(M(\alpha))=M(f(\alpha))$.
We also get 
$$
\spec(\alpha) \subseteq \spec(f(\alpha)) \text{ and } \spec(f(\alpha)) 
\subseteq \spec(f^{-1}(f(\alpha)))
$$ 
giving us $\spec(\alpha)=\spec(f(\alpha))$.

For (d), note that $\spec(\mathfrak{L})=[[\spec(\alpha)]]$ for some nonzero 
$\alpha \in \mathfrak{L}$ with $M(\alpha)=\mathfrak{L}$. Since $f$ is an
 isomorphism, we have $f(\alpha)$ is nonzero with 
$$
M(f(\alpha)) = f(M(\alpha))=f(\mathfrak{L})=\mathfrak{L}'.
$$
Hence by Proposition~\ref{pro: Specwd}, we have
$$
\spec(\mathfrak{L}')=[[\spec(f(\alpha))]]=
[[\spec(\alpha)]]=\spec(\mathfrak{L}).
$$
Thus we are done.
\end{proof}

\begin{defn}
Two pseudomonoids $G$ and $G'$ are isomorphic if there is a bijection 
$f: G \rightarrow G'$ such that \\
\noindent
(a) $f(0)=0$ and \\
\noindent
(b) $f(x+y) = f(x) + f(y)$ for all distinct $x,y \in G$.

It is easy to see that if $[[G]]=[[G']]$, then $G$ is isomorphic to $G'$.
\end{defn} 

\begin{ex}
\label{ex: four}
The field $\mathbf{k}$ is a vector space over its characteristic subfield 
$\mathbb{Q}$.
If $\dime_{\mathbb{Q}}(\mathbf{k})=\infty$ then we can find $\mathbb{Q}$-vector 
subspaces $V_n$ of $\mathbf{k}$ of dimension $n$ for every $n \in \mathbb{N}$.
Certainly the $\{V_n\}_{n \in \mathbb{N}}$ 
are a family of nonisomorphic pseudomonoids which are simple pseudomonoids 
by Lemma~\ref{lem: simplePM} as they are abelian groups. 

Thus the construction of Example~\ref{ex: three} gives us a family 
$Witt(A(V_n))$ of simple, strongly graded Lie algebras by 
Corollary~\ref{cor: simplecor}. 

Furthermore since $\spec(Witt(A(V_n)))=[[V_n]]$ we see that 
$$
\{Witt(A(V_n))\}_{n \in \mathbb{N}}
$$ 
is an infinite family of nonisomorphic, simple, generalized Witt 
algebras.
\end{ex}

\begin{ex}
\label{ex: five}
Let $\mathbb{N}$ be the monoid of natural numbers. For every pair of 
relatively prime integers $n,m >1$, we define $M_{n,m}$ to be the submonoid 
of $\mathbb{N}$ generated by $n$ and $m$. It is easy to see that 
$M_{n,m}$ is never simple as a pseudomonoid as 
one can find nontrivial restrictions of 
ideal subsets from $\mathbb{N}$. (See Example~\ref{ex: notsimple}.)
Furthermore $M_{n,m}$ is isomorphic to $M_{n',m'}$ if and only if 
$\{n,m\}=\{n',m'\}$. 

Thus again using the construction of Example~\ref{ex: three}, we get an 
infinite family 
$$
Witt(A(M_{n,m}))_{1<n<m, gcd(n,m)=1}
$$ 
of nonisomorphic, nonsimple, 
generalized Witt algebras. By Proposition~\ref{pro: sc}, all of these Lie 
algebras are semisimple and indecomposable and have no abelian Lie subalgebras 
of dimension greater than one.

In contrast, over an algebraically closed field, 
the only finite dimensional Lie algebra which is indecomposable, 
semisimple and has no abelian Lie subalgebras of dimension greater than one 
is $\mathfrak{sl}_2$. 
\end{ex}

\begin{ex}
\label{ex: six}
$$
\spec(Witt(\mathbf{k}[x]))=[[\{-1,0,1,\dots\}]]
$$ 
and 
$$
\spec(Witt(\mathbf{k}[x,x^{-1}]))=\mathbb{Z}
$$ 
by examples~\ref{ex: one} and 
\ref{ex: two}. These spectra are easily seen not to be isomorphic to 
those discussed in examples~\ref{ex: four} and \ref{ex: five}, and not 
isomorphic to each other of course.
\end{ex}

Thus the following is a list of nonisomorphic generalized Witt algebras:
the classical Witt algebra, the centerless Virasoro algebra, 
$Witt(A(M_{m,n}))$ for relatively prime $m,n >1$ and 
$Witt(A(V_n))$ for $\mathbb{Q}$-vector subspaces $V_n$ of $\mathbf{k}$, 
where $\dime_{\mathbb{Q}}(V_n)=n$ for all $n \in \mathbb{N}$.

Thus, we hope we have conveyed the rich variety of generalized Witt algebras 
available!

In the final section, we verify the Jacobian conjecture 
for a class of generalized Witt algebras. 
That is, we show that under suitable hypothesis, any nonzero 
Lie algebra endomorphism of a generalized Witt algebra is actually an 
automorphism.

\section{The Jacobian conjecture}

A polynomial map $f: \mathbb{C}^n \rightarrow \mathbb{C}^n$ is a map with the 
property  
that each of its components is a complex polynomial in $n$-variables.
Such a map is called invertible if it is bijective, and if its inverse is a
 polynomial map also. 
It is easily seen that an invertible polynomial map has the property that the 
determinant of its Jacobian matrix is a nonzero constant as a function on 
$\mathbb{C}^n$. (See~\cite{Cou}). The classical Jacobian conjecture is that 
the converse is true and remains open for all $n \geq 2$. 

One can ask the following question about 
the classical Weyl algebra in $n$-variables. 
(Defined similarly as we did in the beginning of the paper but using 
$n$-variables instead of one.) Is every nonzero algebra 
endomorphism of a classical Weyl algebra  
actually an automorphism? The answer to this question is unknown for all 
$n \geq 1$. If the statement is true for some $n$, then it implies the 
classical Jacobian conjecture in dimension $n$. (See~\cite{Cou}).

One can generalize to:

\begin{defn}
Given a Lie algebra $\mathfrak{L}$, one says that the Jacobian conjecture 
holds for $\mathfrak{L}$, if every nonzero Lie algebra endomorphism is 
actually an automorphism. 
\end{defn}

Certainly the Jacobian conjecture does not hold for all Lie algebras but 
does hold for finite dimensional, simple Lie algebras.

We will show, among other things that the Jacobian conjecture holds for the 
classical Witt algebra which is the Lie algebra of derivations of the 
classical Weyl algebra where the corresponding conjecture remains open.

One can see immediately, the spectral theory machinery developed earlier 
has a lot to say about this. For example one has:

\begin{cor}
\label{cor: exrigidity}
If $Witt(A)$ is a generalized Lie algebra and $f\partial$ is a nonzero element 
such that $\spec(f\partial) \neq \{0\}$. Then for every injective 
Lie algebra endomorphism $F$  
of $Witt(A)$, one has $F(f\partial)=g\partial$ with $W(g) \leq 1$. 
\end{cor}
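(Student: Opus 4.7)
The plan is to observe that this is essentially a direct combination of the spectrum-invariance under injective homomorphisms (Proposition~\ref{pro: Specinvariance}(b)) and the spectral classification (Theorem~\ref{thm: spectraltheorem}).

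First I would write $F(f\partial) = g\partial$, which makes sense because $F$ sends $Witt(A)$ into itself and every element of $Witt(A)$ has this form for some $g \in A$. The next step is to apply Proposition~\ref{pro: Specinvariance}(b) to the injective Lie algebra homomorphism $F$ and the element $f\partial$, obtaining the containment
$$
\spec(f\partial) \subseteq \spec(F(f\partial)) = \spec(g\partial).
$$
Since by hypothesis $\spec(f\partial) \neq \{0\}$, this forces $\spec(g\partial) \neq \{0\}$ as well.

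Finally, I would invoke Theorem~\ref{thm: spectraltheorem}(a), which asserts that whenever $W(g) > 1$ one has $\spec(g\partial) = \{0\}$. Contrapositively, $\spec(g\partial) \neq \{0\}$ forces $W(g) \leq 1$, which is exactly the conclusion.

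There is essentially no obstacle here: the corollary is a one-line deduction once Proposition~\ref{pro: Specinvariance} and Theorem~\ref{thm: spectraltheorem} are available, and the entire content of the argument is that injective endomorphisms can only enlarge the spectrum, while the spectrum being nontrivial bounds the Weierstrass degree of the symbol from above.
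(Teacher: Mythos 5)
Your proposal is correct and is exactly the argument the paper intends: the paper's proof is the one-line remark that the corollary "follows immediately from Theorem~\ref{thm: spectraltheorem} and Proposition~\ref{pro: Specinvariance}," and you have simply filled in the same two steps (spectrum containment under the injective map $F$, then the contrapositive of part (a) of the spectral theorem applied to the nonzero element $g\partial$). No further comment is needed.
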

\begin{proof}
This follows immediately from Theorem~\ref{thm: spectraltheorem} and 
Proposition~\ref{pro: Specinvariance}.
\end{proof}

Corollary~\ref{cor: exrigidity} shows that the image of an element under 
an injective endomorphism, is reasonably 
constrained by its spectrum. Of course, 
Corollary~\ref{cor: exrigidity} is a rough application of these ideas and 
we will have to refine them a bit to get our desired result.
To this end, we define:

\begin{defn}
\label{defn: self-containing}
A pseudomonoid $G \subseteq \mathbf{k}$ is called self-containing if there is 
nonzero $a \in \mathbf{k}$ such that $aG \subset G$ and $aG \neq G$.
\end{defn}

Notice in this case that $aG$ is a subpseudomonoid of 
$G$ which is equivalent to $G$ so we 
could also define a pseudomonoid to be self-containing 
if it possesses a proper 
subpseudomonoid equivalent to itself.

The integers $\mathbb{Z}=\{\dots,-1,0,1,\dots \}$ 
is an example of a self-containing pseudomonoid 
since $n\mathbb{Z}$ is a proper subpseudomonoid equivalent to 
$\mathbb{Z}$ for all natural numbers $n \geq 2$. The reader can verify that 
this is in fact a complete list of all such proper subpseudomonoids.   

We next give examples of pseudomonoids which are not self-containing.  

\begin{lem}
\label{lem: notselfcontaining}
Any subfield $E$ of $\mathbf{k}$ 
is not a self-containing pseudomonoid. \\
\noindent  
$\{-1,0,1,\dots\} \subseteq \mathbf{k}$ is not a self-containing psuedomonoid.
\end{lem}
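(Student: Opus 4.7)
The plan is, in each case, to suppose there exists a nonzero $a \in \mathbf{k}$ with $aG \subseteq G$ (where $G$ is the pseudomonoid in question) and then show we must have $aG = G$, which contradicts the ``self-containing'' hypothesis and thereby establishes the lemma.

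For a subfield $E$, the first step is to observe that $1 \in E$, so $a = a \cdot 1 \in aE \subseteq E$, forcing $a \in E$. Since $E$ is a field and $a$ is nonzero, $a^{-1} \in E$. Then for any $x \in E$, we have $a^{-1}x \in E$, hence
$$
x = a(a^{-1}x) \in aE,
$$
which gives $E \subseteq aE$ and therefore $aE = E$. This settles the subfield case.

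For $G = \{-1, 0, 1, 2, \dots\}$, I would exploit both of the extremal elements $1$ and $-1$ that $G$ contains. Since $1 \in G$, we get $a = a \cdot 1 \in aG \subseteq G$, so $a$ is an integer with $a \geq -1$. Since $-1 \in G$, we also get $-a \in aG \subseteq G$, which forces $-a \geq -1$, i.e., $a \leq 1$. Combining these with $a \neq 0$, we see $a \in \{-1, 1\}$.

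Finally I would rule out $a = -1$ by noting that $2 \in G$ but $(-1)\cdot 2 = -2 \notin G$, contradicting $aG \subseteq G$. This leaves only $a = 1$, for which $aG = G$, so $G$ is not self-containing. The whole argument is quite short; the only thing to watch is being careful that ``$a \in \mathbf{k}$'' does not presuppose $a \in G$, which is why one must derive $a \in G$ from $1 \in G$ at the outset. There is no serious obstacle here since both pseudomonoids are rigid enough that the defining element $1$ (and, in the second case, $-1$) pins down $a$ almost completely.
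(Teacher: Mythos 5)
Your proposal is correct and follows essentially the same route as the paper's proof: in both cases you use $1 \in G$ to force $a \in G$, the field structure to get $aE = E$, and the element $-1$ (together with ruling out $a=-1$) to pin down $a=1$ in the second case. The only cosmetic difference is that you rule out $a=-1$ explicitly via $(-1)\cdot 2 = -2 \notin G$ after narrowing to $a \in \{-1,1\}$, whereas the paper dismisses $a=-1$ first and then uses $-a \in G$; your version is if anything slightly more explicit.
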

\begin{proof}
Suppose $aE \subseteq E$ for some nonzero $a \in \mathbf{k}$. Since 
$1 \in E$, it follows that $a \in E$. Then given $x \in E$, $xa^{-1} \in E$ 
and $x=a(xa^{-1})$. Thus $aE=E$. So $E$ is not self-containing.

Give $G=\{-1,0,1,\dots\}$ the weak order inherited by viewing it as 
the usual subset of the real numbers. If $aG \subseteq G$ for some nonzero 
$a \in \mathbf{k}$, it again follows as $1 \in G$, that $a \in G$.

Clearly $a \neq -1$ so $a > 0$. Then we must have $a(-1)=-a \in G$ and 
hence $-a=-1$ and $a=1$. Thus $aG=G$ and so $G$ is not a self-containing 
pseudomonoid either.
\end{proof}

\begin{defn}
Let $\mathfrak{L}$ be a strongly graded Lie algebra, graded by $G$. Then 
we can write 
$\mathfrak{L}=\oplus_{g \in G} E_g$ 
as usual. For nonzero $x \in \mathfrak{L}$, we let $x_g$ be the $g$-component 
of $x$. 

We define the support of $x$ as  
$$
\Supp (x)=\{g \in G | x_g \neq 0\}.
$$ 

We also define $\Supp (0)=\emptyset$.
\end{defn}

\begin{defn}
A weak order $\preceq$ on a pseudomonoid $G$ is called discrete if 
for every $a,b \in G$, the order of the set 
$\{g \in G | a \preceq g \preceq b \}$ is finite.

A pseudomonoid which possesses a discrete order is called discrete.
\end{defn}

Every subpseudomonoid of the integers is discrete by restricting the standard 
weak order.

We are now ready to prove:

\begin{thm}
\label{thm: Jacobian} 
Let $\mathfrak{L}$ be an infinite dimensional, 
strongly graded Lie algebra, graded by a  
pseudomonoid $G$. Suppose $G$ possesses a discrete order $\preceq$. 

Write $\mathfrak{L}=\oplus_{g \in G}E_g$ as usual and let  
$\{e_g\}_{g \in G}$ be a basis of $\mathfrak{L}$ with the usual properties. 
Let $\Theta$ be the correspondence 
map of Proposition~\ref{pro: idealcor}. 
 
Then for 
every injective Lie algebra endomorphism $f$ of $\mathfrak{L}$,  
we have one of the following two possibilities: \\
\noindent
(a) 
$$
f(e_0)=\frac{1}{a}e_0
$$ 
for some nonzero $a \in \mathbf{k}$ such that 
$aG \subseteq G$. In this case $f(\mathfrak{L})=\Theta(aG)$. Hence if 
$G$ is not self-containing, then $f$ is onto. \\
\noindent
(b) 
$$
f(e_0)= \frac{1}{a}e_0 + D
$$ 
for some nonzero $a \in \mathbf{k}$ 
such that $aG \subseteq G$ and $\Supp (D)$ consists of elements $\prec' 0$.
(Here $\preceq'$ is either equal to $\preceq$, or is $\preceq$ reversed.)
Furthermore there is $\preceq'$-minimal $I \in \Supp (D)$ such that 
$I \preceq' ag$ for all $g \in G$.

In the situation of (b), if $G$ is not self-containing, then 
$I$ is actually a minimum element of 
$(G, \preceq')$, and 
$$
f(e_0) =\frac{1}{a}e_0 + k'e_I.
$$ 
Furthermore $f$ is onto.
\end{thm}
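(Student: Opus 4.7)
The plan is to set $\alpha = f(e_0)$ and use the spectral constraints together with the index machinery from Lemma~\ref{lem: index} and Corollary~\ref{cor: indexeigen} to pin down the structure of $\alpha$. Since $f$ is injective, Proposition~\ref{pro: Specinvariance}(b) gives $G = \spec(e_0) \subseteq \spec(\alpha)$, so $\spec(\alpha)$ is infinite. If both $\Init(\alpha) \neq 0$ and $\Term(\alpha) \neq 0$, Corollary~\ref{cor: indexeigen}(a,b) forces every eigenvector of $\ad(\alpha)$ to have initial index $\Init(\alpha)$ and terminal index $\Term(\alpha)$. Since $\preceq$ is discrete, there are only finitely many indices in between, hence only finitely many independent eigenvectors, contradicting infiniteness of $\spec(\alpha)$. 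So at least one of $\Init(\alpha), \Term(\alpha)$ is $0$; after possibly reversing the order to $\preceq'$, I may assume $\Term(\alpha) = 0$ and write $\alpha = \frac{1}{a} e_0 + D$ with $\Supp(D)$ consisting of elements $\prec' 0$.

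Next I analyze the eigenvectors of $\ad(\alpha)$. For any eigenvector $x$ with eigenvalue $\mu \neq 0$, I compare the $\Term(x)$-component of $[\alpha, x]$ and $\mu x$: since $D$ has support $\prec' 0$, $[D, x]$ has terminal index strictly below $\Term(x)$, so only $\frac{1}{a}[e_0, x]$ contributes at the top, yielding $\mu = \Term(x)/a$. Thus $\spec(\alpha) \setminus \{0\} \subseteq \frac{1}{a} G$. Combined with $G \subseteq \spec(\alpha)$, this gives $aG \subseteq G$. If $D = 0$, we are in case (a); writing $f(e_g)$ as a scalar multiple of $e_{ag}$ (it is a nonzero eigenvector with eigenvalue $g$, hence proportional to the unique such eigenvector of $\frac{1}{a} e_0$, which is $e_{ag}$), one deduces $f(\mathfrak{L}) = \Theta(aG)$. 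If $D \neq 0$, set $I = \Init(\alpha) \in \Supp(D)$; by Corollary~\ref{cor: indexeigen}(a) every eigenvector $x^{(g)}$ of $\alpha$ with eigenvalue $g \in G$ satisfies $\Init(x^{(g)}) = I$, so $I \preceq' \Term(x^{(g)}) = ag$ for $g \neq 0$ (and $I \preceq' 0 = a \cdot 0$), giving $I \preceq' ag$ for all $g \in G$.

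Finally, assume $G$ is not self-containing. Then $aG \subseteq G$ forces $aG = G$, so $I$ is a lower bound of $G = aG$ in $\preceq'$, and since $I \in G$, $I$ is a $\preceq'$-minimum of $G$. By Lemma~\ref{lem: basicorder}(a), as $I \prec' 0$, $I$ is the unique negative element of $(G, \preceq')$. Therefore $\Supp(D) \subseteq \{g \in G : I \preceq' g \prec' 0\} = \{I\}$, and $f(e_0) = \frac{1}{a} e_0 + k' e_I$. For the ``onto'' assertion in both cases, note that in case (a) $aG = G$ gives $f(\mathfrak{L}) = \Theta(G) = \mathfrak{L}$. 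In case (b), $\spec(\alpha) \subseteq \frac{1}{a} G = G$ combined with $G \subseteq \spec(\alpha)$ gives $\spec(\alpha) = G$, and for each $g \in G$, $f(e_g)$ is the (up to scalar) unique eigenvector $v_{ag}$ of $\ad(\alpha)$ with eigenvalue $g$, having $\Term(v_{ag}) = ag$ and $\Supp(v_{ag}) \subseteq [I, ag]$ (a finite set by discreteness). Expanding any $e_h$ in the collection $\{v_{ag} : g \in G\}$ by reverse induction on $\preceq'$ over the finite interval $[I, h]$ gives a triangular system with nonzero diagonal entries, so $e_h \in f(\mathfrak{L})$ for all $h \in G$, and $f$ is onto.

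The main obstacle is Step 2 above, extracting from ``$\spec(\alpha)$ is infinite'' that $\alpha$ cannot have both nonzero initial and terminal indices --- this is exactly where the discreteness hypothesis on $\preceq$ is essential, because it converts the index pinch from Corollary~\ref{cor: indexeigen} into a finiteness statement; without it the eigenvectors could be spread out over an infinite support interval. The surjectivity argument in case (b) is the second delicate point: it requires simultaneously that $aG = G$ (so the terminal indices $\{ag : g \in G\}$ already cover $G$) and that $\preceq'$ be discrete (so the triangular inversion terminates in finitely many steps for each basis vector).
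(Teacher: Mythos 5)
Your proposal is correct and follows essentially the same route as the paper's proof: spectral invariance under the injective $f$ giving $G \subseteq \spec(f(e_0))$, the discreteness/index-pinching argument forcing $\Init(f(e_0))$ or $\Term(f(e_0))$ to vanish, the top-component computation $\mu = \Term(x)/a$ yielding $aG \subseteq G$ and $I \preceq' ag$, and Lemma~\ref{lem: basicorder} reducing $D$ to $k'e_I$ when $G$ is not self-containing. The only (harmless) deviation is the final surjectivity step in case (b): the paper first shows $e_I$ and then $e_0$ lie in $f(\mathfrak{L})$ and invokes the correspondence $\Theta$ of Proposition~\ref{pro: idealcor}, whereas you run a direct triangular induction over the finite intervals $\{g \in G \mid I \preceq' g \preceq' h\}$ to exhibit each $e_h$ in the image; both versions rest on the same facts ($aG=G$, $\Term(f(e_g))=ag$, minimality of $I$, discreteness).
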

\begin{proof}
Let $f: \mathfrak{L} \rightarrow \mathfrak{L}$ be an injective endomorphism 
of Lie algebras. Then $f(\mathfrak{L})$ is an infinite dimensional
 Lie subalgebra of $\mathfrak{L}$.

Write $\mathfrak{L}=\oplus_{g\in G} E_g$ as in the statement of the theorem  
and let $\preceq$ be a discrete order on $G$. 

Now 
$$
f(\mathfrak{L})=f(M(e_0)) \subseteq M(f(e_0))
$$
and 
$$
\spec(e_0) \subseteq \spec(f(e_0)) 
$$
by Proposition~\ref{pro: Specinvariance}. Thus $f(e_0) \in f(\mathfrak{L})$ 
is $ad$-diagonalizable on $f(\mathfrak{L})$. (In other words, there is a basis 
for $f(\mathfrak{L})$ consisting of eigenvectors of $ad(f(e_0)).)$

Using the chosen order on $G$, we can speak of $I=\Init (f(e_0))$ and 
$T=\Term (f(e_0))$ which both lie in $G$.

By Corollary~\ref{cor: indexeigen}, we conclude that 
if $I \neq 0$ then every eigenvector $x$ of $ad(f(e_0))$ has $\Init (x)=I$.
Similarly, if $T \neq 0$, then every eigenvector $x$ of $ad(f(e_0))$ has 
$\Term (x)=T$.

Let us assume 
both $I$ and $T$ are nonzero to derive a contradiction. 
Let $S=\{g \in G | I \preceq g \preceq T\}$. Since $(G,\preceq)$ is discrete, 
$S$ is finite. Since $I,T$ are nonzero, we have seen that every 
eigenvector of $ad(f(e_0))$ will lie in $\Theta(S)$, and hence 
$f(\mathfrak{L}) 
\subseteq \Theta(S)$ which is a contradiction as $f(\mathfrak{L})$ is 
infinite dimensional.  

So at least one of $I, T$ is zero. By reordering $G$ if necessary, 
we can assume $T=0$. (Notice, if you reverse a discrete order by setting 
$x \prec' y \iff y \prec x$, you get a
 discrete order where $T$ and $I$ interchange. Also notice that this 
reordering  
will not affect the conclusion of the theorem.)

Now if $I=0$ also then $f(e_0)=ke_0$ for nonzero $k \in \mathbf{k}$. 
Now by Proposition~\ref{pro: Specinvariance}, 
$$
G=\spec(e_0) \subseteq \spec(f(e_0))=\spec(ke_0)=kG.
$$
Thus $\frac{1}{k}G \subseteq G$. 
Then notice that $f(E_b(e_0)) \subseteq E_b(ke_0)=E_{\frac{b}{k}}(e_0)$ 
for all $b \in G$ by Proposition~\ref{pro: Specinvariance}. 
Since $E_{\frac{b}{k}}$ is one dimensional, we conclude that 
$f(E_b)=E_{\frac{b}{k}}$ for all $b \in G$ and hence that 
$$
f(\mathfrak{L}) = f(\oplus_{g \in G}E_g)=\oplus_{g \in G}E_{\frac{g}{k}}=
\Theta(\frac{1}{k}G).
$$
So in this case, we get the situation described in (a) of the theorem if 
we set $a=\frac{1}{k}$.

So we may now assume $I \neq 0$, and hence that $I \prec 0$.

Thus $f(e_0) = ke_0 + D$ where every element of $\Supp (D)$ is negative with 
minimum element $I$. 

Now if $x$ is an eigenvector of $ad(f(e_0))$ corresponding to $\mu \in 
\spec(f(e_0))$, we may write:
$$
x=\sum_{i=1}^nx_{g_i}
$$
where $g_1 \prec \dots \prec g_n \in G$ and $x_{g_i} \in 
E_{g_i}$ is nonzero for all $1\leq i \leq n$.

Then a simple calculation shows that 
$$
[f(e_0),x]= kg_nx_{g_n} + D'
$$
where $\Supp (D') \subseteq \{g \in G | g \prec g_n\}$.
Since this must equal $\mu x$, we conclude that 
$kg_n = \mu$ or in other words $k\Term (x)=\mu$.
Thus we conclude that $\spec(f(e_0)) \subseteq k\spec(e_0)$.
However, by Proposition~\ref{pro: Specinvariance}, it follows that 
$\spec(e_0) \subseteq \spec(f(e_0))$.
Thus $G=\spec(e_0) \subseteq \spec(f(e_0)) \subseteq k\spec(e_0)$.
Hence $\frac{1}{k}G \subseteq G$ in this case also. 
 
Now since $I \neq 0$, every eigenvector $x$ corresponding to $\mu$ 
of $f(e_0)$ has $\Init (x)=I$.
Thus $I=\Init (x) \preceq \Term (x)=\mu/k$ and we conclude that 
$I \preceq \frac{g}{k}$ 
for all $g \in G$ since $G \subseteq \spec(f(e_0))$.

Now if $G$ is not self-containing, we must have $\frac{1}{k}G=G$ and 
hence $I$ is a mimimum element of $G$. 
Since $I \prec 0$, it is the unique such 
element. Thus since we had $f(e_0)=ke_0 + D$ where 
$\Supp (D) \subseteq \{g \in G| g \prec 0\}$, we conclude that 
$f(e_0)=ke_0 + k'e_I$. 

Now $kI \in G$ as $\frac{1}{k}G = G$.  
Then by Proposition~\ref{pro: Specinvariance}, we have 
$0 \neq f(e_{kI}) \in E_{kI}(f(e_0))$. 

By our previous analysis, 
$k\Term (f(e_{kI}))=kI$ and so $\Term (f(e_{kI}))=I$. Since $I$ is a minimum of  
$(G,\preceq)$, we conclude $f(e_{kI})$ is a nonzero multiple of $e_I$. 
Hence $e_I \in f(\mathfrak{L})$. 

Since $f(e_0)=ke_0 + k'e_I$ in $f(\mathfrak{L})$, 
we conclude that $f(\mathfrak{L})$ 
contains $e_0$. Now by Proposition~\ref{pro: 
idealcor}, it follows that $f(\mathfrak{L})=\Theta(S)$ where $S$ consists  
of the union of the supports of the elements in $f(\mathfrak{L})$.

However for every $g \in G$, $kg \in G$  and $\Term (f(e_{kg}))=g$ by an  
analysis similar to the one done previously. Hence $S=G$ and f is 
onto. Thus we are done.

\end{proof}

\begin{cor}
\label{cor: classWitt}
Let $\mathfrak{L}$ be a strongly graded Lie algebra, graded by a 
discrete pseudomonoid which is not self-containing. 
Then every injective Lie algebra endomorphism of $\mathfrak{L}$ 
is an automorphism.

If $f$ is any nonzero Lie algebra endomorphism of the classical Witt 
algebra, then $f$ is an automorphism, and furthermore
$$
f(x\partial)=(x+b)\partial
$$
for some $b \in \mathbf{k}$.
Thus the Jacobian conjecture holds for the classical Witt algebra.
\end{cor}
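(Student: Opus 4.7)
The plan is to dispose of the abstract statement first, then derive the classical Witt algebra statement from it, and finally extract the explicit formula for $f(x\partial)$.

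The first sentence is essentially a direct citation: given an injective Lie algebra endomorphism $f$ of $\mathfrak{L}$, Theorem~\ref{thm: Jacobian} places us in case (a) or case (b), and the conclusion of each case explicitly asserts that $f$ is onto when $G$ is not self-containing. Hence $f$ is bijective, i.e.\ an automorphism.

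For the classical Witt algebra I would first reduce ``nonzero'' to ``injective.'' Since this algebra is simple (established earlier via Lemma~\ref{lem: simplePM} and Corollary~\ref{cor: simplecor}), the kernel of any Lie algebra endomorphism is an ideal, hence either zero or everything; a nonzero $f$ must therefore have trivial kernel. By Example~\ref{ex: one} its spectrum is $G = \{-1, 0, 1, \ldots\}$, which is discrete under the order inherited from $\mathbb{Z}$ and not self-containing by Lemma~\ref{lem: notselfcontaining}. The first sentence then gives at once that $f$ is an automorphism.

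To extract $f(x\partial) = (x+b)\partial$, I would invoke the refined conclusions of Theorem~\ref{thm: Jacobian}, taking $e_0 = x\partial$ and $e_n = x^{n+1}\partial$ for $n \geq -1$ so that the normalization $[e_a, e_b] = (b-a)e_{a+b}$ holds. In case (a), $f(e_0) = (1/a)e_0$ with $aG = G$. In case (b), $f(e_0) = (1/a)e_0 + k'e_I$ with $I$ a minimum of $(G, \preceq')$; since $G$ has no maximum in the standard order, its reversal has no minimum, so I would conclude that $\preceq'$ must equal $\preceq$ and $I = -1$. The equation $aG = G$ for $G = \{-1, 0, 1, \ldots\}$ forces $a = 1$: from $1, -1 \in G$ we obtain $a, -a \in G$ and hence $a \in \{-1, 1\}$, while $a = -1$ fails because $-2 \in aG \setminus G$. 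Substituting gives $f(x\partial) = x\partial$ in case (a) or $f(x\partial) = (x + k')\partial$ in case (b), both of the required form.

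The main obstacle I anticipate is the bookkeeping in this final step --- ruling out the reversed-order branch of Theorem~\ref{thm: Jacobian} using the one-sided asymmetry of $\{-1, 0, 1, \ldots\}$, and pinning down $a = 1$. Conceptually everything else is immediate from Theorem~\ref{thm: Jacobian} and the simplicity of the classical Witt algebra.
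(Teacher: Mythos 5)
Your proposal is correct and follows essentially the same route as the paper's proof: cite Theorem~\ref{thm: Jacobian} for the abstract statement, use simplicity to upgrade ``nonzero'' to ``injective,'' and then pin down $a=1$ and $I=-1$ for $G=\{-1,0,1,\dots\}$ to get the explicit formula. Your explicit elimination of the reversed-order branch (no maximum in the standard order, hence no minimum in the reversal) is a detail the paper glosses over but is the right justification for its assertion that $I=-1$.
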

\begin{proof}
The first part follows immediately from Theorem~\ref{thm: Jacobian}.

By Example~\ref{ex: one}, the classical Witt algebra is a strongly graded 
Lie algebra graded by the pseudomonoid $G=\{-1,0,1,\dots\}$ which is obviously 
discrete and is not self-containing by Lemma~\ref{lem: notselfcontaining}.
We have already seen that this Lie algebra is simple, hence any nonzero Lie 
algebra endomorphism $f$  
is injective and hence an automorphism by Theorem~\ref{thm: Jacobian}.

Furthermore, in the strong grading of the classical Witt algebra, we can 
take $x\partial=e_0$ and $x^n\partial \in E_{n-1}$ for all $n \in \mathbb{N}$.

Notice further that if $aG \subseteq G$, in fact $a=1$ as we saw in the 
proof of Lemma~\ref{lem: notselfcontaining}. 
Thus applying Theorem~\ref{thm: Jacobian} again and noting that we must have 
$I=-1$ if we are in situation (b), we conclude furthermore that 
$$
f(x\partial)=(x+b)\partial
$$
for some $b \in \mathbf{k}$.
\end{proof}

\begin{cor}
\label{cor: Virasoro}
If $f$ is a nonzero Lie algebra endomorphism of the centerless Virasoro 
algebra then $f$ is injective and  
$$
f(x\partial)=\frac{1}{a}x\partial
$$
for some nonzero integer $a$.

However, the Jacobian conjecture is false for this Lie algebra. Thus there 
exist injective Lie algebra endomorphisms of the centerless Virasoro algebra 
which are not automorphisms.
\end{cor}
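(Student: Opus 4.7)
The plan is to derive the first statement by applying Theorem~\ref{thm: Jacobian} to $\mathbb{Z}$ with its standard (discrete) ordering, and then to exhibit an explicit family of non-surjective injective endomorphisms to establish the second statement.

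By Example~\ref{ex: two}, the centerless Virasoro algebra $Witt(\mathbf{k}[x,x^{-1}])$ is strongly graded by $G=\mathbb{Z}$, with basis $e_n = x^{n+1}\partial$ satisfying $[e_a,e_b]=(b-a)e_{a+b}$ and $e_0 = x\partial$. Since $\mathbb{Z}$ is an abelian group, Lemma~\ref{lem: simplePM} together with Corollary~\ref{cor: simplecor} imply the algebra is simple, so every nonzero Lie algebra endomorphism $f$ is automatically injective. The standard order makes $\mathbb{Z}$ a discrete pseudomonoid, so Theorem~\ref{thm: Jacobian} applies directly.

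Of the two possibilities for $f(e_0)$ provided by Theorem~\ref{thm: Jacobian}, I will rule out case (b). In that case there would have to exist some $I \in \mathbb{Z}$ with $I \preceq' ag$ for all $g \in \mathbb{Z}$, i.e., a $\preceq'$-lower bound for $a\mathbb{Z}$; but whether $\preceq'$ is the standard order on $\mathbb{Z}$ or its reverse, $a\mathbb{Z}$ is unbounded in the relevant direction, so no such $I$ exists. Hence case (a) must hold, giving $f(e_0)=\frac{1}{a}e_0$ with $a \in \mathbf{k}$ nonzero and $a\mathbb{Z}\subseteq \mathbb{Z}$; since $a\cdot 1 \in \mathbb{Z}$, the element $a$ is a nonzero integer, which is the first assertion of the corollary.

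For the non-surjectivity part, I will construct, for each integer $a$ with $|a|\geq 2$, a specific injective endomorphism that is not onto. The ansatz is dictated by Proposition~\ref{pro: Specinvariance}: if $f(e_0)=\frac{1}{a}e_0$ then $f$ must carry the eigenspace $\mathbf{k}\,e_n$ into $E_{an}(e_0)=\mathbf{k}\,e_{an}$. Setting $f_a(e_n)=\frac{1}{a}e_{an}$ and extending linearly, a routine computation using $[e_n,e_m]=(m-n)e_{n+m}$ will confirm that $[f_a(e_n),f_a(e_m)] = \tfrac{m-n}{a}\,e_{a(n+m)} = f_a([e_n,e_m])$, so $f_a$ is a Lie algebra homomorphism. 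Injectivity is immediate since distinct basis vectors map to nonzero multiples of distinct basis vectors; the image equals the proper subalgebra $\Theta(a\mathbb{Z})$, so $f_a$ is not onto. In the original coordinates this reads $f_a(x^n\partial)=\frac{1}{a}x^{an-a+1}\partial$.

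The only nontrivial step is identifying the correct ansatz for $f_a$; once motivated by Theorem~\ref{thm: Jacobian} and Proposition~\ref{pro: Specinvariance}, the remainder is a straightforward verification, and the rigidity argument ruling out case (b) is immediate from the fact that $\mathbb{Z}$ has no extreme elements under either the standard order or its reverse.
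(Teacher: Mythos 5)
Your proposal is correct and follows essentially the same route as the paper: simplicity gives injectivity, Theorem~\ref{thm: Jacobian} with the standard discrete order on $\mathbb{Z}$ (case (b) being excluded since $a\mathbb{Z}$ has no extreme element) gives $f(x\partial)=\frac{1}{a}x\partial$ with $a$ a nonzero integer, and an explicit non-surjective $f_a$ is exhibited. The only difference is the normalization: you take $f_a(e_n)=\frac{1}{a}e_{an}$ while the paper takes $f_a(e_n)=a^{-(n+1)}e_{an}$; both satisfy the required relation $a\,c_n c_m=c_{n+m}$ for the coefficients, so both verifications go through.
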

\begin{proof}
By Example~\ref{ex: two}, the centerless Virasoro algebra is strongly 
graded by the pseudomonoid $G=\mathbb{Z}=\{\dots,-1,0,1,\dots\}$, 
with basis $e_n = x^{n+1}\partial \in E_n$ for all $n \in \mathbb{Z}$.
$G$ is obviously discrete.

Let $f$ be a nonzero Lie algebra endomorphism. Since the centerless Virasoro 
algebra is simple, $f$ is injective. 
It is easy to see that $a\mathbb{Z} \subseteq \mathbb{Z}$  if and only if 
$a$ is an integer. Also if we use the standard order of $\mathbb{Z}$, 
then there is no $I$ as in situation (b) of Theorem~\ref{thm: Jacobian}, 
and so we immediately conclude from the same theorem that:
$$
f(x\partial)=\frac{1}{a}x\partial
$$
for some nonzero integer $a$ and $\Img (f)=\Theta(a\mathbb{Z})$.

We will now construct such a Lie algebra endomorphism for every nonzero 
intger $a$. Thus for $a \neq \pm 1$, we obtain injective Lie 
algebra endomorphisms which are not onto.

Define $f_a(e_n)=a^{-(n+1)}e_{an}$ 
for all $n \in \mathbb{Z}$. Certainly this 
defines a vector space endomorphism which is not onto if $a \neq \pm 1$. 

We calculate
\begin{align*}
\begin{split}
[f_a(e_n),f_a(e_m)]&=a^{-(n+m+2)}[e_{an},e_{am}] \\
&=(am-an)a^{-(n+m+2)}e_{a(n+m)} \\
&=(m-n)a^{-(n+m+1)}e_{a(n+m)} \\
&=f_a((m-n)e_{n+m}) \\
&=f_a([e_n,e_m]).
\end{split}
\end{align*}

Hence $f$ is a homomorphism of Lie algebras and we are done.
\end{proof}

This concludes our initial study of generalized Witt algebras. 
One sees that for this family of self-centralizing 
Lie algebras, 
spectral analysis provides a powerful tool to answer basic questions locally.
(On $M(\alpha)$ for nonzero $\alpha \in \mathfrak{L}$.)

We found this extremely useful in the case where $\mathfrak{L}=M(\alpha)$ 
for some nonzero $\alpha$, but it should be possible to push these results 
to the more general case by patching together the local spectra to get some 
sort of global scheme.

\bigskip

\noindent
Dept. of Mathematics \\
University of Wisconsin-Whitewater, \\
Whitewater, WI 53190, U.S.A. \\
E-mail address: namk@uww.edu \\

\bigskip

\noindent
Dept. of Mathematics \\
University of Rochester, \\
Rochester, NY 14627, U.S.A. \\
E-mail address: jonpak@math.rochester.edu \\

\end{document}